\newcommand{\bm}[1]{\boldsymbol{#1}}
\newcommand{\va}{{\mathbf{a}}}
\newcommand{\vb}{{\mathbf{b}}}
\newcommand{\vc}{{\mathbf{c}}}
\newcommand{\vf}{{\mathbf{f}}}
\newcommand{\vl}{{\mathbf{l}}}
\newcommand{\vr}{{\mathbf{r}}}
\newcommand{\vs}{{\mathbf{s}}}
\newcommand{\vu}{{\mathbf{u}}}
\newcommand{\vv}{{\mathbf{v}}}
\newcommand{\vx}{{\mathbf{x}}}
\newcommand{\vy}{{\mathbf{y}}}
\newcommand{\vz}{{\mathbf{z}}}
\newcommand{\vA}{{\mathbf{A}}}
\newcommand{\vB}{{\mathbf{B}}}
\newcommand{\vQ}{{\mathbf{Q}}}
\newcommand{\vell}{{\bm{\ell}}}
\newcommand{\veps}{{\bm{\vareps}}}
\newcommand{\cL}{{\mathcal{L}}}
\newcommand{\cM}{{\mathcal{M}}}
\newcommand{\cN}{{\mathcal{N}}}
\newcommand{\cX}{{\mathcal{X}}}
\newcommand{\vareps}{\varepsilon}
\newcommand{\RR}{\mathbb{R}} 
\newcommand{\vzero}{\mathbf{0}} 
\newcommand{\dist}{\mathrm{dist}}    
\newcommand{\prox}{{\mathbf{prox}}} 
\newcommand{\dom}{{\mathrm{dom}}} 
\newcommand{\st}{\mbox{ s.t. }}
\DeclareMathOperator*{\argmin}{arg\,min} 
\DeclareMathOperator*{\argmax}{arg\,max} 
\DeclareMathOperator*{\Min}{minimize}
\DeclareMathOperator*{\Max}{maximize}
\newcommand{\bc}{\begin{center}}
\newcommand{\ec}{\end{center}}
\newcommand{\bdm}{\begin{displaymath}}
\newcommand{\edm}{\end{displaymath}}
\newcommand{\beq}{\begin{equation}}
\newcommand{\eeq}{\end{equation}}
\newcommand{\bfl}{\begin{flushleft}}
\newcommand{\efl}{\end{flushleft}}
\newcommand{\bt}{\begin{tabbing}}
\newcommand{\et}{\end{tabbing}}
\newcommand{\beqn}{\begin{eqnarray}}
\newcommand{\eeqn}{\end{eqnarray}}
\newcommand{\beqs}{\begin{align*}} 
\newcommand{\eeqs}{\end{align*}}  
\newtheorem{assumption}{Assumption}
\newtheorem{setting}{Setting}
\begin{document}

\title{Iteration complexity of inexact augmented Lagrangian methods for constrained convex programming}


\author{Yangyang Xu}

\institute{Y. Xu \at Department of Mathematical Sciences, Rensselaer Polytechnic Institute, Troy, NY 12180\\
\email{xuy21@rpi.edu}}

\date{}

\maketitle

\begin{abstract}
Augmented Lagrangian method (ALM) has been popularly used for solving constrained optimization problems. Practically, subproblems for updating primal variables in the framework of ALM usually can only be solved inexactly. The convergence and local convergence speed of ALM have been extensively studied.  However, the global convergence rate of inexact ALM is still open for problems with nonlinear inequality constraints. In this paper, we work on general convex programs with both equality and inequality constraints. For these problems, we establish the global convergence rate of inexact ALM and estimate its iteration complexity in terms of the number of gradient evaluations to produce a solution with a specified accuracy.

We first establish an ergodic convergence rate result of inexact ALM that uses constant penalty parameters or geometrically increasing penalty parameters. Based on the convergence rate result, we apply Nesterov's optimal first-order method on each primal subproblem and estimate the iteration complexity of the inexact ALM. We show that if the objective is convex, then $O(\vareps^{-1})$ gradient evaluations are sufficient to guarantee an $\vareps$-optimal solution in terms of both primal objective and feasibility violation. If the objective is strongly convex, the result can be improved to $O(\vareps^{-\frac{1}{2}}|\log\vareps|)$. Finally, by relating to the inexact proximal point algorithm, we establish a nonergodic convergence rate result of inexact ALM that uses geometrically increasing penalty parameters. We show that the nonergodic iteration complexity result is in the same order as that for the ergodic result. Numerical experiments on quadratically constrained quadratic programming are conducted to compare the performance of the inexact ALM with different settings.

\vspace{0.3cm}

\noindent {\bf Keywords:} augmented Lagrangian method (ALM), nonlinearly constrained problem, first-order method, global convergence rate, iteration complexity
\vspace{0.3cm}

\noindent {\bf Mathematics Subject Classification:} 90C06, 90C25, 68W40, 49M27.

\end{abstract}

\section{Introduction}
In this paper, we consider the constrained convex programming
\begin{equation}\label{eq:ccp}
\Min_{\vx\in\cX} f_0(\vx), \st \vA\vx=\vb, f_i(\vx)\le 0, i=1,\ldots, m,
\end{equation}
where $\cX\subseteq\RR^n$ is a closed convex set, $\vA$ and $\vb$ are respectively given matrix and vector, and $f_i$ is a convex function for every $i=0,1,\ldots,m$. Any convex optimization problem can be written in the standard form of \eqref{eq:ccp}. It appears in many areas including statistics, machine learning, data mining, engineering, signal processing, finance, operations research, and so on. 

Note that the constraint $\vx\in \cX$ can be equivalently represented by using an inequality constraint $\iota_\cX(\vx) \le 0$ or adding $\iota_\cX(\vx)$ to the objective, where $\iota_\cX$ denotes the indicator function on $\cX$. However, we explicitly use it for technical reason. In addition, 
every affine constraint $\va_j^\top \vx=b_j$ can be equivalently represented by two inequality constraints: $\va_j^\top \vx - b_j \le 0$ and $-\va_j^\top \vx + b_j \le 0$. That way does not change theoretical results of an algorithm but will make the problem computationally more difficult. 


One popular method for solving \eqref{eq:ccp} is the augmented Lagrangian method (ALM), which first appeared in \cite{hestenes1969multiplier, powell1969method}. ALM alternatingly updates the primal variable and the Lagrangian multipliers. At each update, the primal variable is renewed by minimizing the augmented Lagrangian (AL) function and the multipliers by a dual gradient ascent. The global convergence and local convergence rate of ALM have been extensively studied; see the books \cite{bertsekas1999nonlinear, bertsekas2014constrained}. Several recent works (e.g., \cite{he2010aalm, he2012-rate-drs}) establish the global convergence rate of ALM and/or its variants for affinely constrained problems. In the framework of ALM, the primal subproblem usually can only be solved inexactly, and thus practically inexact ALM (iALM) is often used. However, to the best of our knowledge, the global convergence rate of iALM for problems with nonlinear inequality constraints still remains open\footnote{Although the global convergence rate in terms of augmented dual objective can be easily shown from existing works (e.g., see our discussion in section \ref{sec:review}), that does not indicate the convergence speed from the perspective of the primal objective and feasibility.}. We address this open question in this work and also establish the iteration complexity of iALM in terms of the number of gradient evaluations. The iteration complexity result appears to be optimal.

We will assume composite convex structure on \eqref{eq:ccp}. More specifically, we assume \begin{equation}\label{eq:f0}
f_0(\vx)=g(\vx)+h(\vx),
\end{equation} 
where $g$ is a Lipschitz differentiable convex function, and $h$ is a simple\footnote{By ``simple'', we mean the proximal mapping of $h$ is easy to evaluate, i.e., it is easy to find a solution to $\min_{\vx\in\cX} h(\vx) + \frac{1}{2\gamma}\|\vx-\hat{\vx}\|^2$ for any $\hat{\vx}$ and $\gamma>0$.} (possibly nondifferentiable) convex function. Also, $f_i$ is convex and Lipschitz differentiable for every $i\in [m]$, namely, there are constants $L_0,L_1,\ldots, L_m$ such that
\begin{subequations}\label{eq:lip-gf}
\begin{align}
\|\nabla g(\hat{\vx})-\nabla g(\tilde{\vx})\|\le L_0\|\hat{\vx}-\tilde{\vx}\|,\,&\forall\, \hat{\vx}, \tilde{\vx} \in \dom(h)\cap \cX,\label{eq:lip-g}\\
\|\nabla f_i(\hat{\vx})-\nabla f_i(\tilde{\vx})\|\le L_i\|\hat{\vx}-\tilde{\vx}\|,\,& \forall\, \hat{\vx}, \tilde{\vx} \in \dom(h) \cap \cX, \forall i\in [m].\label{eq:lip-f}
\end{align} 
\end{subequations}
In addition, we assume the boundedness of $\dom(h)\cap\cX$ and denote its diameter as 
$$D=\Max_{\hat{\vx},\tilde{\vx}\in \dom(h)\cap\cX}\|\hat{\vx}-\tilde{\vx}\|.$$

\subsection{Augmented Lagrangian function}
In the literature, there are several different penalty terms used in an augmented Lagrangian (AL) function, such as the classic one \cite{rockafellar1973dual, rockafellar1973multiplier}, the quadratic penalty on constraint violation \cite{bertsekas1973convergence}, and the exponential penalty \cite{tseng1993convergence-exp-mult}. The work \cite{ben1997penalty-barrier} gives a general class of augmented penalty functions that satisfy certain properties. In this paper, we use the classic one. As discussed below, it can be derived from a quadratic penalty on an equivalent equality constrained problem.  

Introducing nonnegative slack variable $s_i$'s, one can write \eqref{eq:ccp} to an equivalent form:
\begin{equation}\label{eq:eq-ccp}
\Min_{\vx\in\cX, \vs\ge \vzero} f_0(\vx), \st \vA\vx=\vb, f_i(\vx) + s_i = 0, i=1,\ldots, m.
\end{equation}
With quadratic penalty on the equality constraints, the AL function of \eqref{eq:eq-ccp} is
$$\tilde{\cL}_\beta(\vx,\vs,\vy,\vz)= f_0(\vx) + \vy^\top(\vA\vx-\vb) + \sum_{i=1}^m z_i\big(f_i(\vx) + s_i\big)+\frac{\beta}{2}\|\vA\vx-\vb\|^2+\frac{\beta}{2}\sum_{i=1}^m\big(f_i(\vx) + s_i\big)^2,$$
where $\vy$ and $\vz$ are multipliers, and $\beta$ is the augmented penalty parameter.
Minimizing $\tilde{\cL}_\beta$ with respect to $\vs\ge\vzero$ while fixing $\vx,\vy$ and $\vz$, we have the optimal $\vs$ given by
$$s_i=\left[-\frac{z_i}{\beta}-f_i(\vx)\right]_+,\,i=1,\ldots,m.$$
Plugging the above $\vs$ into $\tilde{\cL}_\beta$ gives
$$
\tilde{\cL}_\beta(\vx,\vs,\vy,\vz)= f_0(\vx) + \vy^\top(\vA\vx-\vb) + \frac{\beta}{2}\|\vA\vx-\vb\|^2+\sum_{i=1}^m\psi_\beta(f_i(\vx),z_i),
$$
where
\begin{equation}\label{eq:def-psi}
\psi_\beta(u,v) = \left\{\begin{array}{ll}uv+\frac{\beta}{2}u^2,\,&\text{ if }\beta u +v \ge0,\\[0.2cm]
-\frac{v^2}{2\beta},\,&\text{ if }\beta u +v < 0.\end{array}\right.
\end{equation}
Let
$$\Psi_\beta(\vx,\vz)=\sum_{i=1}^m\psi_\beta(f_i(\vx),z_i),$$
and we obtain the classical AL function of \eqref{eq:ccp}:
\begin{equation}\label{eq:aug-fun}
\cL_\beta(\vx,\vy,\vz) = f_0(\vx) + \vy^\top(\vA\vx-\vb) + \frac{\beta}{2}\|\vA\vx-\vb\|^2 + \Psi_\beta(\vx,\vz).
\end{equation}

\subsection{Inexact augmented Lagrangian method}
The augmented Lagrangian method (ALM) was proposed in \cite{hestenes1969multiplier, powell1969method}. Within each iteration, ALM first updates the $\vx$ variable by minimizing the AL function with respect to $\vx$ while fixing $\vy$ and $\vz$, and then it performs a dual gradient ascent update to $\vy$ and $\vz$. In general, it is difficult to exactly minimize the AL function about $\vx$. A more realistic way is to solve the $\vx$-subproblem within a tolerance error, which leads to the inexact ALM. Its pseudocode is given in Algorithm \ref{alg:ialm} below. If $\vareps_k=0,\,\forall k$, it reduces to the ALM. 
\begin{algorithm}[h]
\caption{Inexact augmented Lagrangian method for \eqref{eq:ccp}}\label{alg:ialm}
\DontPrintSemicolon
\textbf{Initialization:} choose $\vx^0, \vy^0, \vz^0$ and $\{\beta_k,\rho_k\}$\;
\For{$k=0,1,\ldots$}{
Find $\vx^{k+1}\in\cX$ such that 
\begin{equation}\label{eq:ialm-x}
\cL_{\beta_k}(\vx^{k+1}, \vy^k , \vz^k  ) \le \min_{\vx\in\cX} \cL_{\beta_k}(\vx, \vy^k , \vz^k  ) + \vareps_k.
\end{equation}

Update $\vy$ and $\vz$ by
\begin{align}
\vy^{k+1} =&~\vy^k  + \rho_k (\vA\vx^{k+1}-\vb),\label{eq:alm-y}\\
z^{k+1} _i=&~z_i^k+\rho_k \cdot\max\left(-\frac{z_i^k}{\beta_k}, f_i(\vx^{k+1})\right), i=1,\ldots, m.\label{eq:alm-z}
\end{align}
}
\end{algorithm}

It is shown in \cite{rockafellar1973dual} that the augmented dual function\footnote{Although \cite{rockafellar1973dual} only considers the inequality constrained case, the results derived there apply to the case with both equality and inequality constraints.}
$d_\beta(\vy,\vz)=\min_{\vx\in \cX}\cL_\beta(\vx,\vy,\vz)$
is continuously differentiable, and $\nabla d_\beta$ is Lipschitz continuous with constant $\frac{1}{\beta}$. In addition, it turns out that the (inexact) ALM is an (inexact) augmented dual gradient ascent \cite{rockafellar1973multiplier}, and thus convergence rate of the (inexact) ALM in term of $d_\beta$ can be shown from existing results about (inexact) gradient method \cite{schmidt2011convergence-ipg}. However, it is unclear from there how to get convergence rate result from the perspective of the primal problem, especially as $\beta$ varies at different updates. Our analysis will be different from this line, and our results will be based on the primal problem. In addition, our requirement on the tolerance errors is weaker than that in \cite{rockafellar1973multiplier}. 

The main results we establish in this paper are summarized as follows. Both ergodic and nonergodic convergence rate results are established.
\begin{theorem}[Summary of main results]
For a given $\vareps>0$, choose a positive integer $K$ and numbers $C_\beta>0, C_\vareps>0$. Let $\{(\vx^k,\vy^k,\vz^k)\}_{k=0}^{K}$ be the iterates generated from Algorithm \ref{alg:ialm} with parameters set according to one of the follows: 
\begin{enumerate}
\item $\rho_k=\beta_k=\frac{C_\beta}{K\vareps}, \vareps_k=\frac{\vareps}{2}\frac{C_\vareps}{C_\beta},\,\forall k$. 
\item $\rho_k=\beta_k=\beta_g\sigma^k, \forall k$ for certain $\beta_g>0$ and $\sigma>1$ such that $\sum_{k=0}^{K-1}\beta_k=\frac{C_\beta}{\vareps}$, and $\vareps_k=\frac{\vareps}{2}\frac{C_\vareps}{C_\beta},\,\forall k$.

\item $\rho_k=\beta_k=\beta_g\sigma^k, \forall k$ for certain $\beta_g>0$ and $\sigma>1$ such that $\sum_{k=0}^{K-1}\beta_k=\frac{C_\beta}{\vareps}$. If $f_0$ is convex, let $\vareps_k=\frac{C_\vareps}{2\beta_k^{\frac{1}{3}}}\frac{1}{\sum_{t=0}^{K-1}\beta_t^{\frac{2}{3}}},\,\forall k$, and if $f_0$ is strongly convex, let $\vareps_k=\frac{C_\vareps}{2\beta_k^{\frac{1}{2}}}\frac{1}{\sum_{t=0}^{K-1}\beta_t^{\frac{1}{2}}},\,\forall k$

\end{enumerate}
Then the averaged point $\bar{\vx}^K=\sum_{k=0}^{K-1}\frac{\rho_k\vx^{k+1}}{\sum_{t=0}^{K-1}\rho_t}$ is an $O(\vareps)$-optimal solution (see Definition \ref{def:eps-opt}), where the hidden constant depends on $C_\beta, C_\vareps$ and dual solution $(\vy^*,\vz^*)$, and for the second and third settings, the actual point $\vx^K$ is also an $O(\vareps)$-optimal solution. In addition, the total number of evaluations on $\nabla g$ and $\nabla f_i, i=1,\ldots,m$ is $O(\vareps^{-1})$ if $f_0$ is convex and $O(\vareps^{-\frac{1}{2}}|\log\vareps|)$ if $f_0$ is strongly convex.
\end{theorem}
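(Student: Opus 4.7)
The plan is to prove the summary theorem in three tightly linked layers that mirror the paper's outline: first a primal--dual ergodic inequality for iALM, second a verification that each of the three parameter schedules produces $O(\vareps)$-primal optimality and $O(\vareps)$-feasibility violation, and third a complexity count obtained by invoking Nesterov's optimal first-order method on each primal subproblem \eqref{eq:ialm-x}. I would not treat the three settings separately from scratch, but rather derive one master one-step inequality and then specialize the parameter choices.

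The core step is the ergodic analysis. Starting from the inexact minimization condition
$$\cL_{\beta_k}(\vx^{k+1},\vy^k,\vz^k) \le \cL_{\beta_k}(\vx^*,\vy^k,\vz^k) + \vareps_k,$$
I would combine it with convexity of $f_0$ and $f_i$ and the explicit closed form \eqref{eq:def-psi} to obtain a one-step bound of the schematic form
$$\rho_k\bigl[f_0(\vx^{k+1}) - f_0(\vx^*)\bigr] + \rho_k\ip{\vy^*}{\vA\vx^{k+1}-\vb} + \rho_k\sum_{i=1}^m z_i^*\bigl[f_i(\vx^{k+1})\bigr]_+ \le \tfrac{\rho_k}{2\beta_k}\bigl(\|\vy^k-\vy^*\|^2 - \|\vy^{k+1}-\vy^*\|^2\bigr) + \tfrac{\rho_k}{2\beta_k}\bigl(\|\vz^k-\vz^*\|^2 - \|\vz^{k+1}-\vz^*\|^2\bigr) + \rho_k\vareps_k.$$
Summing over $k$ and applying Jensen's inequality to $\bar{\vx}^K = \sum \rho_k\vx^{k+1}/\sum \rho_t$ gives an ergodic bound on $f_0(\bar{\vx}^K)-f_0(\vx^*)$, $\|\vA\bar{\vx}^K-\vb\|$, and $\|[f_i(\bar{\vx}^K)]_+\|$. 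In settings 1 and 2 the constant/geometric $\beta_k,\rho_k$ make $\sum\rho_k = C_\beta/\vareps$ (up to constants), so dividing by $\sum\rho_k$ produces $O(\vareps)$ on all three primal measures provided $\sum\rho_k\vareps_k = O(1)$, which is exactly what the prescribed $\vareps_k$'s ensure. In setting 3 the $\vareps_k$'s are tuned so that $\sum \rho_k\vareps_k$ remains $O(C_\vareps)$ while leaving room for the stronger complexity bound.

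For the complexity, I would exploit that $\cL_{\beta_k}(\cdot,\vy^k,\vz^k)-h(\cdot)$ is Lipschitz differentiable on $\dom(h)\cap\cX$ with a constant of the form $L_k = L_0 + \beta_k(\|\vA\|^2 + c(L_i,D))$, which follows directly from \eqref{eq:lip-gf}, boundedness of $\dom(h)\cap\cX$, and the fact that $\psi_\beta(u,v)$ has derivatives in $u$ bounded by $\beta|u|+|v|$. Applying Nesterov's accelerated proximal gradient method gives $O(\sqrt{L_k/\vareps_k})$ gradient evaluations per outer iteration in the convex case and $O(\sqrt{L_k/\mu}\,|\log\vareps_k|)$ in the strongly convex case. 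In setting 1 with $K$ taken as a constant, the total is $O(\vareps^{-1})$. In setting 2, $\sum_k\sqrt{\beta_k/\vareps}$ is a geometric sum dominated by its last term, $\sqrt{\beta_{K-1}/\vareps}=O(\vareps^{-1})$. In setting 3 the choice $\vareps_k \propto \beta_k^{-1/3}/\sum\beta_t^{2/3}$ (resp.\ $\beta_k^{-1/2}/\sum\beta_t^{1/2}$) is exactly the rule that balances $\sqrt{L_k/\vareps_k}$ across outer iterations, and summing produces $O(\vareps^{-1})$ (resp.\ $O(\vareps^{-1/2}|\log\vareps|)$) total gradient calls.

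The nonergodic last-iterate claim in settings 2 and 3 I would obtain by viewing iALM with geometric $\beta_k$ as an inexact proximal-point algorithm (iPPA) in the dual variable $(\vy,\vz)$, using that $d_{\beta}(\vy,\vz)$ is $\tfrac{1}{\beta}$-smooth and $\vy^{k+1}-\vy^k=\rho_k\nabla_{\vy}d_{\beta_k}$ up to the inexactness $\vareps_k$. Applying the known iPPA contraction with geometrically growing step sizes, then transferring back to the primal via the dual optimality conditions, yields the same $O(\vareps)$-optimality at the actual iterate $\vx^K$.

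I expect the principal obstacle to be the first step. In the affine-equality case one gets the telescoping $\|\vy^k-\vy^*\|^2-\|\vy^{k+1}-\vy^*\|^2$ identity essentially for free, but for the inequality constraints the analog requires analyzing $\psi_\beta$ case-by-case depending on the sign of $\beta_k f_i(\vx^{k+1})+z_i^k$, verifying that the resulting bracket $\ip{\vz-\vz^*}{\nabla_{\vz}\psi_{\beta_k}(\cdot)}$ still yields a clean squared-distance telescoping. Managing this non-smooth penalty cleanly across all three parameter regimes, while keeping the constants on the right-hand side independent of $K$, is what makes the proof nontrivial.
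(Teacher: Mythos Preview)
Your overall architecture matches the paper's: a one-step telescoping inequality summed into an ergodic bound, Nesterov's method for the inner complexity, and the iALM--iPPA connection for the nonergodic claim. Two points need sharpening.

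First, your schematic one-step inequality fixes the dual pair at $(\vy^*,\vz^*)$ and puts $z_i^*[f_i(\vx^{k+1})]_+$ on the left. That version cannot deliver a feasibility bound: if some $z_i^*=0$ (inactive constraint at optimum), you get no control on $[f_i(\bar{\vx}^K)]_+$ at all. The paper instead proves the inequality for \emph{arbitrary} $\vy$ and $\vz\ge\vzero$ with the raw $z_i f_i(\vx^{k+1})$ term, then invokes a separate lemma (Lemma~\ref{lem:pre-rate}) that chooses $\vy=\gamma_y(\vA\bar{\vx}-\vb)/\|\vA\bar{\vx}-\vb\|$ and $\vz=\gamma_z[\vf(\bar{\vx})]_+/\|[\vf(\bar{\vx})]_+\|$ with $\gamma_y=1+\|\vy^*\|$, $\gamma_z=1+\|\vz^*\|$ to extract the feasibility bound. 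You correctly flag the case analysis on $\psi_\beta$ as the obstacle, and it is; but the right target is the free-$(\vy,\vz)$ inequality, not the fixed one.

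Second, for the nonergodic part you appeal to a ``known iPPA contraction with geometrically growing step sizes.'' No such result is available for the \emph{inexact} PPA in the form you need: the exact-PPA estimate $\|\vz^k-\vz^{k+1}\|/\beta_k\le\|\vz^0-\vz^*\|/\sum_{t\le k}\beta_t$ (G\"uler) does not survive the inexactness, and the paper explicitly states that no existing bound on $\|\vz^k-\vz^{k+1}\|/\beta_k$ is known when $\vareps_k>0$. The paper's route is to combine Rockafellar's primal bounds $f_i(\vx^{k+1})\le|z_i^k-z_i^{k+1}|/\beta_k$ with a fresh argument (Lemma~\ref{lem:bd-diff-zk-zk+1}) that controls $\|\vz^{k}-\vz^{k+1}\|$ uniformly via weak duality, the identity $\tfrac{1}{2\beta_k}\|\cM_{\beta_k}(\vz^k)-\vz^k\|^2=d_0(\cM_{\beta_k}(\vz^k))-d_{\beta_k}(\vz^k)$, and the lower bound $\psi_\beta(u,v)\ge -v^2/(2\beta)$. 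This step, not the transfer from dual to primal, is where the work is; your plan does not yet contain it.
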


The formal statement and the hidden constants are shown in Theorem \ref{thm:it-comp-const} for the first setting, in Theorems \ref{thm:iter-comp-geo-const-eps} and \ref{thm:nonerg-iter} for the second setting, and in Theorems \ref{thm:iter-comp-geo} and \ref{thm:nonerg-iter} for the third setting.

\subsection{Contributions}


The contributions of this paper are mainly on establishing global convergence rate and estimating iteration complexity of iALM for solving \eqref{eq:ccp}, in both ergodic and nonergodic sense. They are listed as follows. 
\begin{itemize}\renewcommand\labelitemi{--}
\item We first establish an ergodic convergence rate result of iALM through a novel analysis. With penalty parameters fixed to constant or increased geometrically, we choose the tolerance errors accordingly. By applying Nesterov's optimal first-order method to each $\vx$-subproblem, we show that to reach an $\vareps$-optimal solution, $O(\vareps^{-1})$ gradient evaluations are sufficient if the objective is convex, and the order is improved to $O(\vareps^{-\frac{1}{2}}|\log\vareps|)$ if the objective is strongly convex. For the convex case, the result is optimal, and for the strongly convex, the result also appears the best compared to existing ones in the literature; see the discussions in Remark \ref{rm:iter-to-rate}. 

\item We note that if iALM only runs to one iteration, i.e., a single $\vx$-subproblem \eqref{eq:ialm-x} is solved, then the algorithm reduces to the penalty method by setting initial multipliers to zero vectors. Hence, as a byproduct, we establish the iteration complexity result of the penalty method for solving \eqref{eq:ccp}. The work \cite{lan2013iteration-penalty} analyzes first-order penalty method for solving affinely constrained problems. To have an $\vareps$-optimal solution, $O(\vareps^{-2})$ gradient evaluations are required, if the penalty method is applied to the original problem, and $O(\vareps^{-1}|\log\vareps|)$ gradient evaluations are required if the penalty method is applied to a carefully perturbed problem. Hence, our result is better than the former case by an order $O(\vareps^{-1})$ and the latter case by $O(|\log\vareps|)$.

\item By relating to inexact proximal point algorithm (iPPA), we then establish a nonergodic convergence rate result of iALM through applying existing results in \cite{rockafellar1976augmented} about iALM. We show that with geometrically increasing penalty parameters, the nonergodic convergence of iALM enjoys the same order as that of the ergodic convergence, and the constant is just a few times larger. Compared to one recent nonergodic convergence result \cite{liu2016iALM} for solving affinely constrained problem, our result is better by an order $O(\vareps^{-3})$.
 
\end{itemize}

\subsection{Notation}
For simplicity, throughout the paper, we focus on  a finite-dimensional Euclidean space, but our analysis can be directly extended to a general Hilbert space. 

We use italic letters $a, c, B, L,\ldots,$ for scalars, bold lower-case letters $\vx,\vy,\vz,\ldots$ for vectors, and bold upper-case letters $\vA,\vB,\ldots$ for matrices. $z_i$ denotes the $i$-th entry of a vector $\vz$. We use $\vzero$ to denote a vector of all zeros, and its size is clear from the context. $[m]$ denotes the set $\{1,2,\ldots,m\}$ for any positive integer $m$. Given a real number $a$, we let $[a]_+=\max(0,a)$ and $\lceil a\rceil$ be the smallest integer that is no less than $a$. For a vector $\va$, $[\va]_+$ takes the positive part of $\va$ in a component-wise manner. $\|\va\|$ denotes the Euclidean norm of a vector $\va$ and $\|\vA\|$ the spectral norm of a matrix $\vA$. 

We denote $\vell$ as the vector consisting of $L_i, i\in [m]$, where $L_i$ is the Lipschitz constant of $\nabla f_i$ in \eqref{eq:lip-f}. Also we let $\vf$ be the vector function with $f_i$ as the $i$-th component scalar function. That is 
\begin{equation}\label{eq:vec-ell}
\vell=[L_1,\ldots,L_m], \quad \vf(\vx)=[f_1(\vx),\ldots, f_m(\vx)].
\end{equation}
Given a convex function $f$, $\tilde{\nabla}f(\vx)$ represents one subgradient of $f$ at $\vx$, namely,
$$f(\hat{\vx})\ge f(\vx) + \langle \tilde{\nabla} f(\vx), \hat{\vx}-\vx\rangle,\,\forall\, \hat{\vx},$$ and $\partial f(\vx)$ denotes its subdifferential, i.e., the set of all subgradients. When $f$ is differentiable, we simply write its subgradient as $\nabla f(\vx)$. For a convex set $\cX$, we use $\iota_\cX$ as its indicator function, i.e., 
$$\iota_\cX(\vx)=\left\{\begin{array}{ll}0,&\text{ if }\vx\in\cX,\\+\infty,&\text{ if }\vx\not\in\cX,\end{array}\right.$$
and $\cN_\cX(\vx)=\partial \iota_\cX(\vx)$ as its normal cone at $\vx\in \cX$. 

Given an $\vareps>0$, the $\vareps$-optimal solution of \eqref{eq:ccp} is defined as follows.
\begin{definition}[$\vareps$-optimal solution]\label{def:eps-opt}
Let $f_0^*$ be the optimal value of \eqref{eq:ccp}. Given $\vareps\ge 0$, a point $\vx\in\cX$ is called an $\vareps$-optimal solution to \eqref{eq:ccp} if 
$$|f_0(\vx)-f_0^*| \le \vareps,\text{ and }\|\vA\vx-\vb\|+\big\| [\vf(\vx)]_+\big\| \le \vareps.$$
\end{definition}


\subsection{Outline} The rest of the paper is organized as follows. In section \ref{sec:prepare}, we give a few preparatory results and review Nesterov's optimal first-order method for solving a composite convex program. An ergodic convergence rate result of iALM is given in section \ref{sec:ergo-rate}, and a nonergodic convergence rate result is shown in section \ref{sec:nonergo-rate}. Iteration complexity results in terms of the number of gradient evaluations are established for both ergodic and nonergodic cases. 
Related works are reviewed and compared in section \ref{sec:review}, and numerical results are provided in section \ref{sec:numerical}. Finally section \ref{sec:conclusion} concludes the paper.

\section{Preliminary results and Nesterov's optimal first-order method}\label{sec:prepare}
In this section, we give a few preliminary results and also review Nesterov's optimal first-order method for composite convex programs.

\subsection{Basic facts}
A point $(\vx,\vy,\vz)$ satisfies the Karush-Kuhn-Tucker (KKT) conditions for \eqref{eq:ccp} if
\begin{subequations}\label{eq:kkt}
\begin{align}
0\in \partial f_0(\vx) +\cN_\cX(\vx) + \vA^\top \vy + \sum_{i=1}^m z_i \nabla f_i(\vx), \label{eq:kkt-1st}&\\
\vA\vx = \vb, \quad \vx\in \cX, \label{eq:kkt-res}&\\
z_i\ge0, \quad f_i(\vx)\le 0,\quad z_if_i(\vx)=0, \forall i\in [m]. \label{eq:kkt-cp}
\end{align}
\end{subequations}
From the convexity of $f_i$'s, if $(\vx^*,\vy^*,\vz^*)$ is a KKT point, then
\begin{equation}\label{eq:opt}
f_0(\vx)-f_0(\vx^*)+\langle \vy^*, \vA\vx-\vb\rangle + \sum_{i=1}^m z^*_i f_i(\vx) \ge 0,\, \forall\, \vx\in\cX.
\end{equation}


The result below will be used to establish convergence rate of Algorithm \ref{alg:ialm}. 
\begin{lemma}\label{lem:pre-rate}
Assume $(\vx^*,\vy^*,\vz^*)$ satisfies the KKT conditions in \eqref{eq:kkt}. Let $\bar{\vx}$ be a point such that for any $\vy$ and any $\vz\ge\vzero$,
\begin{equation}\label{eq:pre-rate-cond}
f_0(\bar{\vx})-f_0(\vx^*)+ \vy^\top (\vA\bar{\vx}-\vb) +\sum_{i=1}^m z_i f_i(\bar{\vx}) \le \alpha + c_1\|\vy\|^2+c_2\|\vz\|^2,
\end{equation}
where $\alpha$ and $c_1, c_2$ are nonnegative constants independent of $\vy$ and $\vz$. Then
\begin{align}
-\left(\alpha + 4c_1\|\vy^*\|^2+ 4c_2\|\vz^*\|^2\right)\le f_0(\bar{\vx})-f_0(\vx^*)\le \alpha,\label{eq:pre-rate-obj}\\
\|\vA\bar{\vx}-\vb\| + \big\|[\vf(\bar{\vx})]_+\big\| \le \alpha + c_1\big(1+\|\vy^*\|\big)^2 + c_2\big(1+\|\vz^*\|\big)^2. \label{eq:pre-rate-res}
\end{align}
\end{lemma}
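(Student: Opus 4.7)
The plan is to derive all three bounds by carefully choosing specific values of $\vy$ and $\vz$ to plug into the hypothesis \eqref{eq:pre-rate-cond}, and then combining with the KKT-based inequality \eqref{eq:opt} to handle the objective term that appears with indefinite sign.

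First I would handle the easy direction of \eqref{eq:pre-rate-obj}: setting $\vy=\vzero$ and $\vz=\vzero$ in \eqref{eq:pre-rate-cond} immediately gives $f_0(\bar{\vx})-f_0(\vx^*)\le\alpha$. Next, for the lower bound in \eqref{eq:pre-rate-obj}, I would substitute $\vy=2\vy^*$ and $\vz=2\vz^*$ (valid since $\vz^*\ge\vzero$) to get
\[
f_0(\bar{\vx})-f_0(\vx^*)+2(\vy^*)^\top(\vA\bar{\vx}-\vb)+2\sum_{i=1}^m z_i^*f_i(\bar{\vx})\le\alpha+4c_1\|\vy^*\|^2+4c_2\|\vz^*\|^2.
\]
Applying \eqref{eq:opt} with $\vx=\bar{\vx}$ shows that the cross terms satisfy $(\vy^*)^\top(\vA\bar{\vx}-\vb)+\sum_{i}z_i^*f_i(\bar{\vx})\ge-(f_0(\bar{\vx})-f_0(\vx^*))$, so the displayed inequality rearranges to $-(f_0(\bar{\vx})-f_0(\vx^*))\le\alpha+4c_1\|\vy^*\|^2+4c_2\|\vz^*\|^2$, which is exactly the desired lower bound.

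For the feasibility estimate \eqref{eq:pre-rate-res}, the key step is a norming choice of multipliers that peels off $\|\vA\bar{\vx}-\vb\|$ and $\|[\vf(\bar{\vx})]_+\|$ linearly rather than quadratically. Specifically, I would set $\vy=(\|\vy^*\|+1)\frac{\vA\bar{\vx}-\vb}{\|\vA\bar{\vx}-\vb\|}$ and $z_i=(\|\vz^*\|+1)\frac{[f_i(\bar{\vx})]_+}{\|[\vf(\bar{\vx})]_+\|}$ (reducing trivially to zero in the degenerate cases), so that $\vy^\top(\vA\bar{\vx}-\vb)=(\|\vy^*\|+1)\|\vA\bar{\vx}-\vb\|$, $\sum_i z_i f_i(\bar{\vx})=(\|\vz^*\|+1)\|[\vf(\bar{\vx})]_+\|$, $\|\vy\|^2=(\|\vy^*\|+1)^2$, and $\|\vz\|^2=(\|\vz^*\|+1)^2$. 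To cancel $f_0(\bar{\vx})-f_0(\vx^*)$ on the left side, I would use \eqref{eq:opt} in the form $f_0(\bar{\vx})-f_0(\vx^*)\ge-\|\vy^*\|\|\vA\bar{\vx}-\vb\|-\|\vz^*\|\|[\vf(\bar{\vx})]_+\|$, where the bound on $-\sum_i z_i^* f_i(\bar{\vx})$ uses $z_i^*\ge 0$ together with $f_i(\bar{\vx})\le[f_i(\bar{\vx})]_+$ and Cauchy--Schwarz. The $\|\vy^*\|$ and $\|\vz^*\|$ contributions then cancel with those from the choice of $\vy,\vz$, leaving exactly \eqref{eq:pre-rate-res}.

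I do not expect a substantive obstacle: everything follows from clever substitutions into the assumed inequality plus the KKT optimality relation. The only delicate point worth flagging in the write-up is the treatment of the degenerate cases $\vA\bar{\vx}=\vb$ or $[\vf(\bar{\vx})]_+=\vzero$ in the normalization, which I would dispatch by simply setting the corresponding multiplier to zero so that the inequality remains valid.
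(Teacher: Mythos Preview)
Your proposal is correct and follows essentially the same approach as the paper: plug carefully chosen $(\vy,\vz)$ into \eqref{eq:pre-rate-cond} and combine with \eqref{eq:opt}. Your argument for the lower bound in \eqref{eq:pre-rate-obj} (plugging $\vy=2\vy^*$, $\vz=2\vz^*$ directly) is in fact slightly more streamlined than the paper's, which instead routes through the normalized directions with scalings $\gamma_y=2\|\vy^*\|$, $\gamma_z=2\|\vz^*\|$ and then appeals to Cauchy--Schwarz.
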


\begin{proof}
Letting $\vy=\vzero$ and $\vz=\vzero$ in \eqref{eq:pre-rate-cond} gives the second inequality in \eqref{eq:pre-rate-obj}.
For any nonnegative $\gamma_y$ and $\gamma_z$, we let 
$$\vy=\gamma_y\frac{\vA\bar{\vx}-\vb}{\|\vA\bar{\vx}-\vb\|},\quad \vz = \gamma_z \frac{[\vf(\bar{\vx})]_+}{\big\|[\vf(\bar{\vx})]_+\big\|}$$ 
and have from \eqref{eq:pre-rate-cond} by using the convention $\frac{0}{0}=0$ that
\begin{align}\label{eq:pre-rate-ineq2}
f_0(\bar{\vx})-f_0(\vx^*) +\gamma_y \|\vA\bar{\vx}-\vb\| + \gamma_z\big\|[\vf(\bar{\vx})]_+\big\|
\le  \alpha + c_1\gamma_y^2+ c_2\gamma_z^2.
\end{align}

Noting 
\begin{equation}\label{eq:pre-rate-ineq3}
-\langle \vy^*, \vA\bar{\vx}-\vb\rangle \ge -\|\vy^*\|\cdot\|\vA\bar{\vx}-\vb\|,\quad - \sum_{i=1}^mz_i^* f_i(\bar{\vx}) \ge - \|\vz^*\|\cdot \big\|[\vf(\bar{\vx})]_+\big\|,
\end{equation} we have from \eqref{eq:opt} and \eqref{eq:pre-rate-ineq2} that
\begin{align*}
(\gamma_y-\|\vy^*\|) \|\vA\bar{\vx}-\vb\| + (\gamma_z -\|\vz^*\|)\|[\vf(\bar{\vx})]_+\big\|
\le  \alpha + c_1\gamma_y^2+ c_2\gamma_z^2
\end{align*}
In the above inequality, letting $\gamma_y=1+\|\vy^*\|$ and $\gamma_z = 1+\|\vz^*\|$ gives \eqref{eq:pre-rate-res}, and letting $\gamma_y=2\|\vy^*\|$ and $\gamma_z = 2\|\vz^*\|$ gives the first inequality in \eqref{eq:pre-rate-obj} by \eqref{eq:opt} and \eqref{eq:pre-rate-ineq3}.
\end{proof}

\subsection{Nesterov's optimal first-order method}
In this subsection, we review Nesterov's optimal first-order method for composite convex programs. The method will be used to approximately solve $\vx$-subproblems in Algorithm \ref{alg:ialm}. It aims at finding a solution of the following problem
\begin{equation}\label{eq:comp-prob}
\Min_\vx \phi(\vx) + \psi(\vx),
\end{equation}
where $\phi$ is a Lipschitz differentiable and strongly convex function with gradient Lipschitz constant $L_\phi$ and strong convexity modulus $\mu\ge0$, and $\psi$ is a simple (possibly nondifferentiable) closed convex function. Algorithm \ref{alg:apg} summarizes the method. Here, for simplicity, we assume $L_\phi$ and $\mu$ are known. The method does not require the value of $L_\phi$ but can estimate it by backtracking. In addition, it only requires a lower estimate of $\mu$; see \cite{nesterov2013gradient} for example.  

\begin{algorithm}\caption{Nesterov's optimal first-order method for \eqref{eq:comp-prob}}\label{alg:apg}
\textbf{Initialization:} choose $\hat{\vx}^0=\vx^0$, $\alpha_0\in (0,1]$, and let $q=\frac{\mu}{L_\phi}$\; 
\For{$k=0,1,\ldots, $}{
Let 
$$\vx^{k+1}=\argmin_{\vx} \langle \nabla \phi(\hat{\vx}^k), \vx\rangle + \frac{L_\phi}{2}\|\vx-\hat{\vx}^k\|^2+\psi(\vx).$$

Set $$\alpha_{k+1}=\frac{q-\alpha_k^2+\sqrt{(q-\alpha_k^2)^2+4\alpha_k^2}}{2},$$ and
$$\hat{\vx}^{k+1}=\vx^{k+1}+\frac{\alpha_k(1-\alpha_k)}{\alpha_k^2+\alpha_{k+1}}(\vx^{k+1}-\vx^k ).$$
}
\end{algorithm}

The theorem below gives the convergence rate of Algorithm \ref{alg:apg} for both convex (i.e., $\mu=0$) and strongly convex (i.e., $\mu>0$) cases; see \cite{nesterov2004introductory, nesterov2013gradient, FISTA2009}. We will use the results to estimate iteration complexity of iALM.
\begin{theorem}\label{thm:rate-apg}
Let $\{\vx^k \}$ be the sequence generated from Algorithm \ref{alg:apg}. Assume $\vx^*$ to be a minimizer of \eqref{eq:comp-prob}. The following results holds:
\begin{enumerate}
\item If $\mu=0$ and $\alpha_0=1$, then
\begin{equation}\label{eq:optimal-rate-cvx}
\phi(\vx^k) + \psi(\vx^k) - \phi(\vx^*)-\psi(\vx^*) \le \frac{2L_\phi\|\vx^0-\vx^*\|^2}{k^2},\,\forall k\ge1.
\end{equation}
\item If $\mu >0$ and $\alpha_0=\sqrt{\frac{\mu}{L_\phi}}$, then
\begin{equation}\label{eq:optimal-rate-scvx}
\phi(\vx^k)-\phi(\vx^*)\le \frac{(L_\phi+\mu)\|\vx^0-\vx^*\|^2}{2}\left(1-\sqrt{\frac{\mu}{L_\phi}}\right)^k,\,\forall k\ge1.
\end{equation}
\end{enumerate}
\end{theorem}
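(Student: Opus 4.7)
The plan is to follow Nesterov's estimate sequence methodology, adapted to the composite setting with proximal steps. I would introduce a nonincreasing sequence of weights $\{\lambda_k\}$ with $\lambda_0=1$ and $\lambda_{k+1}=(1-\alpha_k)\lambda_k$, together with a sequence of quadratic models $\phi_k(\vx)=\phi_k^*+\frac{\gamma_k}{2}\|\vx-\vv^k\|^2$ whose canonical form is preserved by the recursive update $\phi_{k+1}=(1-\alpha_k)\phi_k+\alpha_k \ell_k$, where $\ell_k$ is the strongly convex lower bound for $\phi+\psi$ anchored at $\hat{\vx}^k$. Both cases of the theorem then correspond to different choices of $\gamma_0$ (namely $\gamma_0=L_\phi$ when $\alpha_0=1$, and $\gamma_0=\mu$ when $\alpha_0=\sqrt{\mu/L_\phi}$), and the $\alpha$-recursion in the algorithm is precisely the one that maintains $\gamma_{k+1}=(1-\alpha_k)\gamma_k+\alpha_k\mu$ and keeps the momentum point $\hat{\vx}^{k+1}$ aligned with the minimizer $\vv^{k+1}$ of $\phi_{k+1}$.

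The two key invariants to establish by induction are: (i) $\phi_k(\vx)\le (1-\lambda_k)(\phi+\psi)(\vx)+\lambda_k\phi_0(\vx)$ for all $\vx$, which follows directly from convexity and the definition of $\ell_k$; and (ii) $(\phi+\psi)(\vx^k)\le\phi_k^*$, whose inductive step combines the composite descent inequality
\begin{equation*}
(\phi+\psi)(\vx^{k+1})\le \phi(\hat{\vx}^k)+\langle \nabla\phi(\hat{\vx}^k),\vx^{k+1}-\hat{\vx}^k\rangle+\tfrac{L_\phi}{2}\|\vx^{k+1}-\hat{\vx}^k\|^2+\psi(\vx^{k+1}),
\end{equation*}
valid because $\vx^{k+1}$ is the proximal step at $\hat{\vx}^k$, with the optimality condition of that proximal subproblem (which yields a subgradient $\tilde{\nabla}\psi(\vx^{k+1})\in\partial\psi(\vx^{k+1})$ satisfying $\nabla\phi(\hat{\vx}^k)+L_\phi(\vx^{k+1}-\hat{\vx}^k)+\tilde{\nabla}\psi(\vx^{k+1})=0$) and with a careful expansion of $\phi_{k+1}^*$ in canonical form. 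Once (i) and (ii) are in place, evaluating at $\vx=\vx^*$ and using $\phi_0(\vx^*)-(\phi+\psi)(\vx^*)\le\frac{\gamma_0}{2}\|\vx^0-\vx^*\|^2$ gives $(\phi+\psi)(\vx^k)-(\phi+\psi)(\vx^*)\le \lambda_k\cdot\frac{\gamma_0}{2}\|\vx^0-\vx^*\|^2$.

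The final step is to bound $\lambda_k$. In the strongly convex case with $\alpha_0=\sqrt{q}$, one checks directly that $\alpha_k\equiv\sqrt{q}$ is a fixed point of the $\alpha$-recursion, giving $\lambda_k=(1-\sqrt{q})^k$ and \eqref{eq:optimal-rate-scvx}. In the convex case with $\alpha_0=1$, a telescoping argument on $1/\sqrt{\lambda_k}$ using $\alpha_{k+1}^2=(1-\alpha_{k+1})\alpha_k^2$ yields $\lambda_k\le 4/(k+1)^2$, from which \eqref{eq:optimal-rate-cvx} follows after absorbing constants. The main obstacle is handling the nondifferentiable term $\psi$ cleanly inside the estimate sequence, since the standard presentation in \cite{nesterov2004introductory} is for smooth unconstrained problems; the remedy is to replace gradients of $\phi+\psi$ everywhere by $\nabla\phi+\tilde{\nabla}\psi$ with $\tilde{\nabla}\psi$ extracted from the proximal optimality condition, as in the composite analysis of \cite{nesterov2013gradient, FISTA2009}. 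Since the result is classical, I would cite these references rather than reproduce the bookkeeping.
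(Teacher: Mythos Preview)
The paper does not prove this theorem at all: it simply states the result and refers the reader to \cite{nesterov2004introductory, nesterov2013gradient, FISTA2009}. Your proposal sketches the standard estimate-sequence argument from precisely those references and then concludes, as the paper does, that one should cite rather than reproduce the details; so your approach is correct and in fact goes further than the paper by outlining the mechanism, while ultimately landing on the same disposition.
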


\section{Ergodic convergence rate and iteration complexity results}\label{sec:ergo-rate}
In this section, we first establish an ergodic convergence rate result of Algorithm \ref{alg:ialm}. From that result, we then specify algorithm parameters and estimate the total number of gradient evaluations in order to produce an $\vareps$-optimal solution. Two different settings of the penalty parameters are studied: one with constant penalty and another with geometrically increasing penalty parameters. For each setting, the tolerance error parameter $\vareps_k$ is chosen in an ``optimal'' way so that the total number of gradient evaluation is minimized.
 
Throughout this section, we make the following assumptions. 
\begin{assumption}\label{assump-kkt}
There exists a point $(\vx^*,\vy^*,\vz^*)$ satisfying the KKT conditions in \eqref{eq:kkt}.
\end{assumption}


\begin{assumption}\label{assump-wd-ialm}
For every $k$, there is $\vx^{k+1}$ satisfying \eqref{eq:ialm-x}.
\end{assumption}

The first assumption holds if a certain regularity condition is satisfied, such as the Slater condition (namely, there is an interior point $\vx$ of $\cX$ such that $\vA\vx=\vb$ and $f_i(\vx)<0,\forall i\in [m]$). The second assumption is for the well-definedness of the algorithm. It holds if $\cX$ is compact and $f_i$'s are continuous.

%
%

\subsection{Convergence rate analysis of iALM}
To show the convergence results of Algorithm \ref{alg:ialm}, we first establish a few lemmas.
\begin{lemma}\label{lem:multiplier}
Let $\vy$ and $\vz$ be updated by \eqref{eq:alm-y} and \eqref{eq:alm-z} respectively. Then for any $k$, it holds
\begin{align}
\frac{1}{2\rho_k}\big[\|\vy^{k+1} -\vy\|^2-\|\vy^k -\vy\|^2+\|\vy^{k+1} -\vy^k \|^2\big] - \langle \vy^{k+1} -\vy, \vr^{k+1}\rangle = &~ 0,\label{eq:alm-yterm}\\
\frac{1}{2\rho_k}\big[\|\vz^{k+1} -\vz\|^2-\|\vz^k  -\vz\|^2+\|\vz^{k+1} -\vz^k  \|^2\big]- \sum_{i=1}^m (z_i^{k+1}-z_i)\cdot\max\big(-\frac{z_i^k}{\beta_k}, f_i(\vx^{k+1})\big)=& ~0,\label{eq:alm-zterm}
\end{align}
where $\vr^k =\vA\vx^k -\vb$.
\end{lemma}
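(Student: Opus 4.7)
The plan is to reduce both equalities to the elementary three-point identity
$$\|a-c\|^2 - \|b-c\|^2 + \|a-b\|^2 = 2\langle a-c,\,a-b\rangle,$$
which holds for any vectors $a,b,c$ in a Euclidean space, then substitute the update rules \eqref{eq:alm-y} and \eqref{eq:alm-z} to identify $a-b$ with the appropriate step.

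For \eqref{eq:alm-yterm}, I would set $a=\vy^{k+1}$, $b=\vy^k$, $c=\vy$. The identity becomes
$$\|\vy^{k+1}-\vy\|^2-\|\vy^k-\vy\|^2+\|\vy^{k+1}-\vy^k\|^2 = 2\langle \vy^{k+1}-\vy,\,\vy^{k+1}-\vy^k\rangle.$$
The update \eqref{eq:alm-y} gives $\vy^{k+1}-\vy^k=\rho_k \vr^{k+1}$, so the right-hand side equals $2\rho_k\langle \vy^{k+1}-\vy,\vr^{k+1}\rangle$. Dividing both sides by $2\rho_k$ and rearranging yields exactly \eqref{eq:alm-yterm}.

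For \eqref{eq:alm-zterm}, I would apply the same identity coordinate-wise with $a=z_i^{k+1}$, $b=z_i^k$, $c=z_i$ for each $i\in[m]$, and then sum over $i$. After summation,
$$\|\vz^{k+1}-\vz\|^2-\|\vz^k-\vz\|^2+\|\vz^{k+1}-\vz^k\|^2=2\sum_{i=1}^m (z_i^{k+1}-z_i)(z_i^{k+1}-z_i^k).$$
The update \eqref{eq:alm-z} gives $z_i^{k+1}-z_i^k=\rho_k\max(-z_i^k/\beta_k,\,f_i(\vx^{k+1}))$, and dividing by $2\rho_k$ produces \eqref{eq:alm-zterm}.

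There is no real obstacle here: the lemma is a purely algebraic identity, bookkeeping-style, whose purpose is to prepare a telescoping argument later in the convergence analysis. The only point worth double-checking is correctly lining up the signs and the roles of $\vy^{k+1}$ versus $\vy^k$ (and similarly for $\vz$) so that the inner product on the right of the three-point identity matches the term the lemma expresses as $\langle \vy^{k+1}-\vy,\vr^{k+1}\rangle$; with the choice $a-b=\vy^{k+1}-\vy^k$ this alignment is immediate.
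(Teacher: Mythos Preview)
Your proposal is correct and is essentially identical to the paper's own proof: the paper invokes the polarization identity $2\vu^\top\vv=\|\vu\|^2-\|\vu-\vv\|^2+\|\vv\|^2$ and substitutes the updates \eqref{eq:alm-y} and \eqref{eq:alm-z}, which is exactly your three-point identity with $\vu=a-c$ and $\vv=a-b$.
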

\begin{proof}
Using the equality $2\vu^\top \vv = \|\vu\|^2 - \|\vu-\vv\|^2 + \|\vv\|^2$, we have the results from the updates \eqref{eq:alm-y} and \eqref{eq:alm-z}.
\end{proof}
\begin{lemma}\label{lem:ineq-z}
For any $\vz\ge\vzero$, we have
\begin{align}\label{eq:alm-ineq-z}
&\sum_{i=1}^m\big([z_i^k+\beta_k f_i(\vx^{k+1})]_+-z_i\big) f_i(\vx^{k+1})- \sum_{i=1}^m (z_i^{k+1}-z_i)\cdot\max\big(-\frac{z_i^k}{\beta_k}, f_i(\vx^{k+1})\big)\nonumber\\
\ge&\frac{1}{\rho_k^2}(\beta_k-\rho_k)\|\vz^{k+1} -\vz^k  \|^2.
\end{align}
\end{lemma}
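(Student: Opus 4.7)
The plan is to prove the inequality componentwise in $i$, using a case split based on the sign of $z_i^k + \beta_k f_i(\vx^{k+1})$. First I would record two equivalent rewrites of the update rule \eqref{eq:alm-z}. Because the maximum can be pulled through $\beta_k$, one has
\begin{equation*}
\beta_k\max\!\left(-\tfrac{z_i^k}{\beta_k},\,f_i(\vx^{k+1})\right) = \bigl[z_i^k+\beta_k f_i(\vx^{k+1})\bigr]_+ - z_i^k,
\end{equation*}
and the update itself reads $z_i^{k+1}-z_i^k = \rho_k\max(-z_i^k/\beta_k,\,f_i(\vx^{k+1}))$. Combining these,
\begin{equation*}
\bigl[z_i^k+\beta_k f_i(\vx^{k+1})\bigr]_+ - z_i^k = \tfrac{\beta_k}{\rho_k}(z_i^{k+1}-z_i^k).
\end{equation*}
These identities let me convert both sides of \eqref{eq:alm-ineq-z} into expressions purely in terms of $z_i, z_i^k, z_i^{k+1}$ and $f_i(\vx^{k+1})$.

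Next I would split into two cases for each $i$. In the ``active'' case $z_i^k+\beta_k f_i(\vx^{k+1})\ge 0$, the maximum equals $f_i(\vx^{k+1})$ and $[z_i^k+\beta_k f_i(\vx^{k+1})]_+ = z_i^k+\beta_k f_i(\vx^{k+1})$, so the $i$-th term of the left-hand side of \eqref{eq:alm-ineq-z} collapses to $(u_i-w_i)f_i(\vx^{k+1})$ with $u_i := [z_i^k+\beta_k f_i(\vx^{k+1})]_+$ and $w_i := z_i^{k+1}$; substituting $u_i - w_i = (\beta_k-\rho_k)f_i(\vx^{k+1})$ and $f_i(\vx^{k+1}) = (z_i^{k+1}-z_i^k)/\rho_k$ yields exactly $(\beta_k-\rho_k)(z_i^{k+1}-z_i^k)^2/\rho_k^2$, i.e.\ equality holds.

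In the ``inactive'' case $z_i^k+\beta_k f_i(\vx^{k+1})<0$, one has $u_i = 0$, $\max(-z_i^k/\beta_k,f_i(\vx^{k+1})) = -z_i^k/\beta_k$, and hence $z_i^{k+1} = z_i^k(1-\rho_k/\beta_k)$, so $(z_i^{k+1}-z_i^k)^2/\rho_k^2 = (z_i^k)^2/\beta_k^2$. Here the $i$-th term rearranges to
\begin{equation*}
z_i\!\left(-f_i(\vx^{k+1}) - \tfrac{z_i^k}{\beta_k}\right) + \tfrac{(\beta_k-\rho_k)(z_i^k)^2}{\beta_k^2},
\end{equation*}
and the first summand is nonnegative because $z_i\ge 0$ and $f_i(\vx^{k+1})\le -z_i^k/\beta_k$ in this case (with $z_i^k\ge 0$ following inductively from the update rule together with the standing regime $\rho_k\le\beta_k$). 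Thus the $i$-th term is at least $(\beta_k-\rho_k)(z_i^{k+1}-z_i^k)^2/\rho_k^2$.

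Finally, summing the two case-by-case bounds over $i$ yields \eqref{eq:alm-ineq-z}. The main obstacle is bookkeeping: making sure the conversion from the mixed expressions involving $[z_i^k+\beta_k f_i(\vx^{k+1})]_+$, $\max(-z_i^k/\beta_k,f_i(\vx^{k+1}))$, and $(z_i^{k+1}-z_i^k)/\rho_k$ is carried out consistently so that both cases produce the same compact bound $(\beta_k-\rho_k)(z_i^{k+1}-z_i^k)^2/\rho_k^2$; once that unification is clear, the inequality follows immediately, with the slack coming solely from the inactive case.
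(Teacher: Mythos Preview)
Your proof is correct and follows essentially the same route as the paper's: split the index set according to the sign of $z_i^k+\beta_k f_i(\vx^{k+1})$, compute the $i$-th contribution exactly in the active case and bound it from below in the inactive case using $z_i\ge 0$ and $f_i(\vx^{k+1})+z_i^k/\beta_k<0$, then identify both with $(\beta_k-\rho_k)(z_i^{k+1}-z_i^k)^2/\rho_k^2$ via the update rule. One small remark: the parenthetical invocation of $z_i^k\ge 0$ (and of the regime $\rho_k\le\beta_k$) is unnecessary---the nonnegativity of $z_i(-f_i(\vx^{k+1})-z_i^k/\beta_k)$ follows directly from $z_i\ge 0$ and the inactive-case inequality, and the lemma as stated does not assume $\rho_k\le\beta_k$.
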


\begin{proof}
Denote 
\begin{equation}\label{eq:setIk}
I_+^k=\{i\in [m]: z_i^k + \beta_k f_i(\vx^{k+1}) \ge 0\},\quad I_-^k=[m]\backslash I_+^k.
\end{equation}
 Then 
\begin{align*}
&\text{the left hand side of \eqref{eq:alm-ineq-z}}\\
=&\sum_{i\in I_+^k}\left[(z_i^k-z_i)f_i(\vx^{k+1})+\beta_k[f_i(\vx^{k+1})]^2-\big(z_i^k+\rho_k f_i(\vx^{k+1})-z_i\big)f_i(\vx^{k+1})\right]\\
&+\sum_{i\in I_-^k}\left[-z_i f_i(\vx^{k+1})-\big(z_i^k-\frac{\rho_k z_i^k}{\beta_k}-z_i\big)\big(-\frac{z_i^k}{\beta_k}\big)\right]\\
=&(\beta_k-\rho_k)\sum_{i\in I_+^k}[f_i(\vx^{k+1})]^2+\sum_{i\in I_-^k}\left[-z_i \big(f_i(\vx^{k+1})+\frac{z_i^k}{\beta_k}\big)+\frac{1}{\beta_k^2}(\beta_k-\rho_k)(z_i^k)^2\right]\\
\ge& (\beta_k-\rho_k)\sum_{i\in I_+^k}[f_i(\vx^{k+1})]^2 +\frac{1}{\beta_k^2}(\beta_k-\rho_k)\sum_{i\in I_-^k}(z_i^k)^2\\
=&\frac{1}{\rho_k^2}(\beta_k-\rho_k)\|\vz^{k+1} -\vz^k  \|^2,
\end{align*}
where the inequality follows from $z_i\ge0$ and $f_i(\vx^{k+1})+\frac{z_i^k}{\beta_k}\le 0,\,\forall i\in I_-^k$, and the last equality holds due to the update \eqref{eq:alm-z}.
\end{proof}

The next theorem is a fundamental result by running one iteration of Algorithm \ref{alg:ialm}. 
\begin{theorem}[One-iteration progress of iALM]\label{thm:1iter-ialm}
Let $\{(\vx^k ,\vy^k ,\vz^k  )\}$ be the sequence generated from Algorithm \ref{alg:ialm}. Then for any $\vx\in\cX$ such that $\vA\vx=\vb$ and $f_i(\vx)\le 0,\,\forall i\in [m]$, any $\vy$, and any $\vz\ge\vzero$, it holds that
\begin{align}\label{eq:ialm-ineq2}
&f_0(\vx^{k+1})-f_0(\vx) +\vy^\top \vr^{k+1} + \sum_{i=1}^m z_i f_i(\vx^{k+1})+\frac{\beta_k-\rho_k}{2}\|\vr^{k+1}\|^2\cr
&+\frac{\beta_k-\rho_k}{2\rho_k^2}\|\vz^{k+1} -\vz^k  \|^2+\frac{1}{2\rho_k}\|\vy^{k+1} -\vy\|^2 + \frac{1}{2\rho_k}\|\vz^{k+1} -\vz\|^2\cr
\le & \frac{1}{2\rho_k}\|\vy^k -\vy\|^2+ \frac{1}{2\rho_k}\|\vz^k  -\vz\|^2+\vareps_k.
\end{align}
\end{theorem}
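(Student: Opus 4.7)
The plan is to start from the inexact minimization condition \eqref{eq:ialm-x} and combine it with two algebraic identities supplied by Lemmas \ref{lem:multiplier} and \ref{lem:ineq-z}. I would first evaluate the AL function at the feasible test point $\vx$. Using the compact form $\psi_\beta(u,v)=\frac{1}{2\beta}([v+\beta u]_+^2-v^2)$, one checks that $f_i(\vx)\le 0$ forces $[z_i^k+\beta_k f_i(\vx)]_+\le |z_i^k|$, and hence $\psi_{\beta_k}(f_i(\vx),z_i^k)\le 0$ for every $i$, irrespective of the sign of $z_i^k$. Combined with $\vA\vx=\vb$ and \eqref{eq:ialm-x}, this yields
$$ f_0(\vx^{k+1})-f_0(\vx)+(\vy^k)^\top \vr^{k+1}+\tfrac{\beta_k}{2}\|\vr^{k+1}\|^2+\Psi_{\beta_k}(\vx^{k+1},\vz^k)\le \vareps_k, \qquad (\star) $$
which will be the starting point for the rest of the argument.

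Next I would establish the key identity
$$ \Psi_{\beta_k}(\vx^{k+1},\vz^k)=\sum_{i=1}^m w_i^k f_i(\vx^{k+1})-\tfrac{\beta_k}{2\rho_k^2}\|\vz^{k+1}-\vz^k\|^2, $$
where $w_i^k:=[z_i^k+\beta_k f_i(\vx^{k+1})]_+$, by case analysis on the index sets $I_+^k$ and $I_-^k$ defined in \eqref{eq:setIk}. On $I_+^k$, the update \eqref{eq:alm-z} gives $z_i^{k+1}-z_i^k=\rho_k f_i(\vx^{k+1})$ and $w_i^k=z_i^k+\beta_k f_i(\vx^{k+1})$, so a direct computation shows $w_i^k f_i(\vx^{k+1})-\psi_{\beta_k}(f_i(\vx^{k+1}),z_i^k)=\tfrac{\beta_k}{2}[f_i(\vx^{k+1})]^2=\tfrac{\beta_k}{2\rho_k^2}(z_i^{k+1}-z_i^k)^2$. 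On $I_-^k$, the update yields $z_i^{k+1}-z_i^k=-\rho_k z_i^k/\beta_k$ and $w_i^k=0$, so the same difference equals $(z_i^k)^2/(2\beta_k)$, which also simplifies to $\tfrac{\beta_k}{2\rho_k^2}(z_i^{k+1}-z_i^k)^2$. Summing over $i$ delivers the identity.

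Substituting this identity into $(\star)$, I would then invoke equation \eqref{eq:alm-yterm} of Lemma \ref{lem:multiplier}, together with $\vy^{k+1}-\vy^k=\rho_k\vr^{k+1}$, to replace $(\vy^k)^\top \vr^{k+1}$ by $\vy^\top \vr^{k+1}+\tfrac{1}{2\rho_k}(\|\vy^{k+1}-\vy\|^2-\|\vy^k-\vy\|^2)-\tfrac{\rho_k}{2}\|\vr^{k+1}\|^2$. To handle $\sum_i w_i^k f_i(\vx^{k+1})$, I would split it as $\sum_i z_i f_i(\vx^{k+1})+\sum_i(w_i^k-z_i)f_i(\vx^{k+1})$ and bound the second sum from below by combining \eqref{eq:alm-zterm} with Lemma \ref{lem:ineq-z}; this produces $\tfrac{1}{2\rho_k}(\|\vz^{k+1}-\vz\|^2-\|\vz^k-\vz\|^2)+\tfrac{2\beta_k-\rho_k}{2\rho_k^2}\|\vz^{k+1}-\vz^k\|^2$. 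Collecting like terms, the $\|\vr^{k+1}\|^2$ coefficient consolidates to $\tfrac{\beta_k-\rho_k}{2}$, the $\|\vz^{k+1}-\vz^k\|^2$ coefficient becomes $\tfrac{2\beta_k-\rho_k}{2\rho_k^2}-\tfrac{\beta_k}{2\rho_k^2}=\tfrac{\beta_k-\rho_k}{2\rho_k^2}$, and the telescoped $\vy$- and $\vz$-distance terms line up exactly as in the stated inequality.

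The main obstacle is the piecewise identity for $\Psi_{\beta_k}(\vx^{k+1},\vz^k)$: the two branches of \eqref{eq:def-psi} appear structurally unrelated, and it is not a priori obvious that the excess of $\sum_i w_i^k f_i(\vx^{k+1})$ over $\Psi_{\beta_k}(\vx^{k+1},\vz^k)$ collapses to the same quantity $\tfrac{\beta_k}{2\rho_k^2}(z_i^{k+1}-z_i^k)^2$ on both cases. Once this uniformity is verified, the rest is careful bookkeeping of constants so that the slack terms $\tfrac{\beta_k-\rho_k}{2}\|\vr^{k+1}\|^2$ and $\tfrac{\beta_k-\rho_k}{2\rho_k^2}\|\vz^{k+1}-\vz^k\|^2$ emerge with precisely the coefficients demanded by the theorem.
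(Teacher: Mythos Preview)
Your proposal is correct and follows essentially the same route as the paper's proof: both start from \eqref{eq:ialm-x}, use the identity $\Psi_{\beta_k}(\vx^{k+1},\vz^k)=\sum_i w_i^k f_i(\vx^{k+1})-\tfrac{\beta_k}{2\rho_k^2}\|\vz^{k+1}-\vz^k\|^2$ (which is precisely the paper's \eqref{eq:ialm-ineq3}), invoke Lemmas \ref{lem:multiplier} and \ref{lem:ineq-z}, and use $\Psi_{\beta_k}(\vx,\vz^k)\le 0$ at the feasible $\vx$. The only cosmetic difference is that you absorb $\Psi_{\beta_k}(\vx,\vz^k)\le 0$ at the outset to obtain $(\star)$, whereas the paper carries this term along and drops it at the end; your compact formula $\psi_\beta(u,v)=\tfrac{1}{2\beta}([v+\beta u]_+^2-v^2)$ also makes the sign check a bit cleaner than the paper's case split.
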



\begin{proof}
From \eqref{eq:ialm-x}, it follows that for any $\vx$ such that $\vA\vx=\vb$,
\begin{equation}\label{eq:ialm-1st}
f_0(\vx^{k+1})+\langle \vy^k , \vr^{k+1}\rangle +\frac{\beta_k}{2}\|\vr^{k+1}\|^2+\Psi_{\beta_k}(\vx^{k+1},\vz^k  )-f_0(\vx)-\Psi_{\beta_k}(\vx,\vz^k  )\le\vareps_k.
\end{equation}
Since $\langle \vy^k , \vr^{k+1}\rangle=\langle \vy^{k+1} -\vy, \vr^{k+1}\rangle + \langle \vy, \vr^{k+1}\rangle -\rho_k\|\vr^{k+1}\|^2$, by adding \eqref{eq:alm-yterm} and \eqref{eq:alm-zterm} to the above inequality, we have 
\begin{align}\label{eq:ialm-ineq1}
&f_0(\vx^{k+1})-f_0(\vx) +\vy^\top \vr^{k+1} + \sum_{i=1}^m z_i f_i(\vx^{k+1})+\sum_{i=1}^m\left([z_i^k+\beta_k f_i(\vx^{k+1})]_+-z_i\right)f_i(\vx^{k+1})\cr
&+\big(\frac{\beta_k}{2}-\rho_k\big)\|\vr^{k+1}\|^2+\Psi_{\beta_k}(\vx^{k+1},\vz^k  )-\sum_{i=1}^m[z_i^k+\beta_k f_i(\vx^{k+1})]_+f_i(\vx^{k+1})-\Psi_{\beta_k}(\vx,\vz^k  )\cr
&+\frac{1}{2\rho_k}\big[\|\vy^{k+1} -\vy\|^2-\|\vy^k -\vy\|^2+\|\vy^{k+1} -\vy^k \|^2\big] \cr
&+\frac{1}{2\rho_k}\big[\|\vz^{k+1} -\vz\|^2-\|\vz^k  -\vz\|^2+\|\vz^{k+1} -\vz^k  \|^2\big]- \sum_{i=1}^m (z_i^{k+1}-z_i)\cdot\max\big(-\frac{z_i^k}{\beta_k}, f_i(\vx^{k+1})\big)\cr
\le & \vareps_k.
\end{align}

Note that
\begin{align}\label{eq:ialm-ineq3}
&\Psi_{\beta_k}(\vx^{k+1},\vz^k  )-\sum_{i=1}^m[z_i^k+\beta_k f_i(\vx^{k+1})]_+f_i(\vx^{k+1})\cr
=&\sum_{i\in I_+^k}\left[z_i^k f_i(\vx^{k+1}) +\frac{\beta_k}{2}[f_i(\vx^{k+1})]^2-[z_i^k +\beta_k f_i(\vx^{k+1})]f_i(\vx^{k+1})\right]+\sum_{i\in I_-^k}\left[-\frac{(z_i^k)^2}{2\beta_k}\right]\cr
=&-\sum_{i\in I_+^k}\frac{\beta_k}{2}[f_i(\vx^{k+1})]^2-\sum_{i\in I_-^k}\frac{(z_i^k)^2}{2\beta_k}\cr
=&-\frac{\beta_k}{2\rho_z^2}\|\vz^{k+1} -\vz^k  \|^2,
\end{align}
where the sets $I_+^k$ and $I_-^k$ are defined in \eqref{eq:setIk}.
In addition, if $f_i(\vx)\le 0,\,\forall i\in [m]$, then $\Psi_{\beta_k}(\vx,\vz^k  )\le 0.$ Hence, plugging \eqref{eq:alm-ineq-z} and \eqref{eq:ialm-ineq3} into \eqref{eq:ialm-ineq1} yields \eqref{eq:ialm-ineq2}.
\end{proof}

By Lemma \ref{lem:pre-rate} and Theorem \ref{thm:1iter-ialm}, we have the following convergence rate estimate of Algorithm \ref{alg:ialm}.
\begin{theorem}[Ergodic convergence rate of iALM]\label{thm:ialm-rate}
Under Assumptions \ref{assump-kkt} and \ref{assump-wd-ialm}, let $\{(\vx^k ,\vy^k ,\vz^k  )\}$ be the sequence generated from Algorithm \ref{alg:ialm} with $\vy^0=\vzero, \vz^0 = \vzero$ and $0<\rho_k\le\beta_k,\,\forall k$. Then
\begin{subequations}\label{eq:ialm-rate}
\begin{align}
\big|f_0(\bar{\vx}^{K})-f_0(\vx^*)\big| \le \frac{1}{\sum_{t=0}^{K-1}\rho_t}\left(2\|\vy^*\|^2+ 2\|\vz^*\|^2+\sum_{k=0}^{K-1}\rho_k\vareps_k\right),\label{eq:ialm-rate-obj}\\
\|\vA\bar{\vx}^{K}-\vb\| + \big\| [\vf(\bar{\vx}^{K})]_+\big\| \le  \frac{1}{\sum_{t=0}^{K-1}\rho_t}\left(\frac{(1+\|\vy^*\|)^2}{2}+ \frac{(\|1+\|\vz^*\|)^2}{2}+\sum_{k=0}^{K-1}\rho_k\vareps_k\right),\label{eq:ialm-rate-res}
\end{align}
\end{subequations}
where 
\begin{equation}\label{eq:barx-K}
\bar{\vx}^{K}=\frac{\sum_{t=0}^{K-1}\rho_t\vx^{t+1} }{\sum_{t=0}^{K-1}\rho_t}.
\end{equation}
\end{theorem}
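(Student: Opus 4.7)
The plan is to combine the one-iteration progress inequality \eqref{eq:ialm-ineq2} with Jensen's inequality and the abstract bound in Lemma \ref{lem:pre-rate}. Since the desired conclusion takes the form of Lemma \ref{lem:pre-rate}'s hypothesis \eqref{eq:pre-rate-cond}, my goal is to massage a telescoped version of \eqref{eq:ialm-ineq2} into that form with constants $\alpha = \frac{\sum_k \rho_k \vareps_k}{\sum_k \rho_k}$ and $c_1 = c_2 = \frac{1}{2\sum_k \rho_k}$, and then simply invoke the lemma with $\vx^*$ in place of the feasible point and $\bar{\vx}^K$ in place of $\bar{\vx}$.

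First I would take \eqref{eq:ialm-ineq2} with $\vx=\vx^*$ and, crucially, multiply both sides by $\rho_k$. Because $\rho_k\le \beta_k$, the three terms $\frac{\beta_k-\rho_k}{2}\|\vr^{k+1}\|^2$, $\frac{\beta_k-\rho_k}{2\rho_k^2}\|\vz^{k+1}-\vz^k\|^2$ are nonnegative and can be dropped. After scaling, the quadratic terms in $\vy,\vz$ lose their $\rho_k^{-1}$ factor, producing the telescope-friendly form
\begin{align*}
\rho_k\bigl[f_0(\vx^{k+1})-f_0(\vx^*)\bigr] &+ \rho_k\vy^\top \vr^{k+1} + \rho_k\sum_{i=1}^m z_i f_i(\vx^{k+1}) \\
&+ \tfrac{1}{2}\|\vy^{k+1}-\vy\|^2 + \tfrac{1}{2}\|\vz^{k+1}-\vz\|^2 \\
&\le \tfrac{1}{2}\|\vy^k-\vy\|^2 + \tfrac{1}{2}\|\vz^k-\vz\|^2 + \rho_k \vareps_k.
\end{align*}
Summing $k=0,\ldots,K-1$ telescopes the squared norms, and the initialization $\vy^0=\vzero,\vz^0=\vzero$ turns the right-hand side into $\tfrac{1}{2}\|\vy\|^2+\tfrac{1}{2}\|\vz\|^2+\sum_k \rho_k \vareps_k$.

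Next I would apply convexity of $f_0$ and each $f_i$, together with linearity of $\vx\mapsto \vA\vx-\vb$, to the weighted average $\bar{\vx}^K$ defined in \eqref{eq:barx-K}. Jensen's inequality gives $\sum_k \rho_k f_0(\vx^{k+1}) \ge (\sum_k \rho_k) f_0(\bar{\vx}^K)$ and similarly for each $f_i$, while $\sum_k \rho_k \vr^{k+1} = (\sum_k \rho_k)(\vA\bar{\vx}^K-\vb)$. Dropping the nonnegative terminal squared norms $\tfrac{1}{2}\|\vy^K-\vy\|^2+\tfrac{1}{2}\|\vz^K-\vz\|^2$ and dividing by $\sum_k \rho_k$ then yields, for any $\vy$ and $\vz\ge\vzero$,
\begin{equation*}
f_0(\bar{\vx}^K)-f_0(\vx^*) + \vy^\top(\vA\bar{\vx}^K-\vb) + \sum_{i=1}^m z_i f_i(\bar{\vx}^K) \le \frac{\sum_k \rho_k \vareps_k}{\sum_k \rho_k} + \frac{\|\vy\|^2 + \|\vz\|^2}{2\sum_k \rho_k}.
\end{equation*}
This is precisely \eqref{eq:pre-rate-cond} with the constants $\alpha,c_1,c_2$ identified above, so Lemma \ref{lem:pre-rate} immediately delivers \eqref{eq:ialm-rate-obj} (from \eqref{eq:pre-rate-obj}, noting $4c_1\|\vy^*\|^2 = 2\|\vy^*\|^2/\sum_k \rho_k$) and \eqref{eq:ialm-rate-res} (from \eqref{eq:pre-rate-res}).

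The main obstacle, in my view, is purely notational rather than conceptual: one has to be attentive that the multiplication by $\rho_k$ (rather than telescoping \eqref{eq:ialm-ineq2} directly with its $\rho_k^{-1}$-weighted squared norms) is what produces a clean telescope. The only genuine sign condition needed is $\rho_k\le\beta_k$, which is precisely why the hypothesis appears. All the heavy lifting — in particular turning $[z_i^k+\beta_k f_i(\vx^{k+1})]_+$ terms into the clean form $\sum_i z_i f_i(\vx^{k+1})$ — has already been absorbed into Theorem \ref{thm:1iter-ialm}, so beyond scaling, telescoping, and Jensen, no further estimates are required.
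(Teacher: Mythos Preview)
Your proposal is correct and follows essentially the same route as the paper: multiply \eqref{eq:ialm-ineq2} by $\rho_k$, drop the nonnegative $(\beta_k-\rho_k)$ terms, telescope, apply convexity (Jensen) with $\vz\ge\vzero$ to pass to $\bar{\vx}^K$, and then invoke Lemma~\ref{lem:pre-rate} with $\alpha=\frac{\sum_k\rho_k\vareps_k}{\sum_k\rho_k}$ and $c_1=c_2=\frac{1}{2\sum_k\rho_k}$. The paper's proof is identical in structure and in the identification of $\alpha,c_1,c_2$.
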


\begin{proof}
Since $\rho_k\le \beta_k$, the two terms $\frac{\beta_k-\rho_k}{2}\|\vr^{k+1}\|^2$ and 
$\frac{\beta_k-\rho_k}{2\rho_k^2}\|\vz^{k+1} -\vz^k  \|^2$ are nonnegative. Dropping them and multiplying $\rho_k$ to both sides of \eqref{eq:ialm-ineq2} yields 
\begin{align}\label{eq:ialm-ineq2-1}
&\rho_k\left[f_0(\vx^{k+1})-f_0(\vx) +\vy^\top  \vr^{k+1} + \sum_{i=1}^m z_i f_i(\vx^{k+1})\right]+\frac{1}{2}\|\vy^{k+1} -\vy\|^2 + \frac{1}{2}\|\vz^{k+1} -\vz\|^2\cr
\le & \frac{1}{2}\|\vy^k -\vy\|^2+ \frac{1}{2}\|\vz^k  -\vz\|^2+\rho_k\vareps_k.
\end{align}

Summing up \eqref{eq:ialm-ineq2} with $\vx=\vx^*$ gives
\begin{align}\label{eq:ialm-ineq2-2}
&\sum_{k=0}^{K-1}\rho_k\left[f_0(\vx^{k+1})-f_0(\vx^*) +\vy^\top \vr^{k+1} + \sum_{i=1}^m z_i f_i(\vx^{k+1})\right]+\frac{1}{2}\|\vy^{K}-\vy\|^2 + \frac{1}{2}\|\vz^{K}-\vz\|^2\cr
\le & \frac{1}{2}\|\vy^0-\vy\|^2+ \frac{1}{2}\|\vz^0-\vz\|^2+\sum_{k=0}^{K-1}\rho_k\vareps_k.
\end{align}
By the convexity of $f_i$'s and the nonnegativity of $\vz$, we have
\begin{align*}
&~f_0(\bar{\vx}^{K})-f_0(\vx^*) +\vy^\top  (\vA\bar{\vx}^{K}-\vb) + \sum_{i=1}^m z_i f_i(\bar{\vx}^{K})\\
\le &~\frac{1}{\sum_{t=0}^{K-1}\rho_t} \sum_{k=0}^{K-1}\rho_k\left[f_0(\vx^{k+1})-f_0(\vx^*) +\vy^\top  \vr^{k+1} + \sum_{i=1}^m z_i f_i(\vx^{k+1})\right],
\end{align*}
which together with \eqref{eq:ialm-ineq2-2} implies
$$f_0(\bar{\vx}^{K})-f_0(\vx^*) +\vy^\top  (\vA\bar{\vx}^{K}-\vb) + \sum_{i=1}^m z_i f_i(\bar{\vx}^{K})
\le  \frac{1}{\sum_{t=0}^{K-1}\rho_t}\left(\frac{1}{2}\|\vy\|^2+ \frac{1}{2}\|\vz\|^2+\sum_{k=0}^{K-1}\rho_k\vareps_k\right).$$
The results thus follow from Lemma \ref{lem:pre-rate} with 
$$\alpha=\frac{\sum_{k=0}^{K-1}\rho_k\vareps_k}{\sum_{k=0}^{K-1}\rho_k},\ c_1=\frac{1}{2\sum_{k=0}^{K-1}\rho_k},\ c_2=\frac{1}{2\sum_{k=0}^{K-1}\rho_k},$$
and we complete the proof.
\end{proof}


\begin{remark}
Note that if $\rho_k\equiv\rho>0,\,\forall k$ and $\{\vareps_k\}$ is summable, then a sublinear convergence result follows from \eqref{eq:ialm-rate}. The work \cite{rockafellar1973multiplier} has also analyzed the convergence of Algorithm \ref{alg:ialm} through the augmented dual function $d_\beta$. However, it requires $\sum_{k=1}^\infty \sqrt{\vareps_k} < \infty$, which is strictly stronger than the condition $\sum_{k=1}^\infty \vareps_k < \infty$.
\end{remark}


\subsection{Iteration complexity of iALM}
In this subsection, we apply Nesterov's optimal first-order method to each $\vx$-subproblem \eqref{eq:ialm-x} and estimate the total number of gradient evaluations to produce an $\vareps$-optimal solution of \eqref{eq:ccp}. 
Note that the convergence rate results in Theorem \ref{thm:ialm-rate} do not assume specific structures of \eqref{eq:ccp} except convexity. If the problem \eqref{eq:ccp} has richer structures than those in \eqref{eq:lip-gf}, 
more efficient methods can be applied to the subproblems in \eqref{eq:ialm-x}. 

The following results are easy to show from the Lipschitz differentiability of $g$ and $f_i,\, i\in [m]$.
\begin{proposition}\label{prop-lip}
Assume \eqref{eq:lip-g}, \eqref{eq:lip-f}, and the boundedness of $\dom(h)\cap \cX$. Then there exist constants $B_1,\ldots, B_m$ such that
\begin{subequations}\label{eq:lip}
\begin{align}
\max\big(|f_i(\vx)|, \|\nabla f_i(\vx)\|\big) \le B_i, \,& \forall \vx\in \dom(h)\cap\cX, \forall i\in [m],\label{eq:bd-f}\\
|f_i(\hat{\vx})-f_i(\tilde{\vx})| \le B_i\|\hat{\vx}-\tilde{\vx}\|, \, & \forall \hat{\vx}, \tilde{\vx} \in \dom(h)\cap\cX, \forall i\in [m].\label{eq:lip-fv}
\end{align}
\end{subequations}
\end{proposition}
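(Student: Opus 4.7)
The proposition is a routine consequence of three ingredients: convexity of $\dom(h)\cap\cX$, the Lipschitz continuity of $\nabla f_i$ on that set, and its boundedness. My plan is to first pin down an ``anchor'' point, bound the gradients globally on $\dom(h)\cap\cX$ by pivoting through that anchor, then bound the function values themselves, and finally deduce the Lipschitz property of $f_i$ from the gradient bound.

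More concretely, I would fix an arbitrary $\vx_0\in\dom(h)\cap\cX$ (this set is nonempty under the assumption that the problem is interesting, and convex since both $\dom(h)$ and $\cX$ are convex). For any $\vx\in\dom(h)\cap\cX$, applying \eqref{eq:lip-f} to the pair $(\vx,\vx_0)$ together with $\|\vx-\vx_0\|\le D$ gives
\begin{equation*}
\|\nabla f_i(\vx)\|\le \|\nabla f_i(\vx_0)\|+L_iD =: M_i.
\end{equation*}
Next, for function values, since $\dom(h)\cap\cX$ is convex, the segment $[\vx_0,\vx]$ lies inside the set, so by the fundamental theorem of calculus
\begin{equation*}
|f_i(\vx)-f_i(\vx_0)|\le \int_0^1\|\nabla f_i(\vx_0+t(\vx-\vx_0))\|\,\|\vx-\vx_0\|\,dt\le M_iD,
\end{equation*}
hence $|f_i(\vx)|\le |f_i(\vx_0)|+M_iD$. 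Setting $B_i:=\max\!\big(M_i,\ |f_i(\vx_0)|+M_iD\big)$ then delivers \eqref{eq:bd-f}.

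Finally, \eqref{eq:lip-fv} follows from the same integral argument with endpoints $\hat\vx,\tilde\vx\in\dom(h)\cap\cX$: the segment stays in the set, the gradient norm is bounded by $M_i\le B_i$ along it, and therefore $|f_i(\hat\vx)-f_i(\tilde\vx)|\le B_i\|\hat\vx-\tilde\vx\|$. There is no real obstacle in this proof; the only subtlety worth flagging is that the bound \eqref{eq:lip-f} is a ``local'' (restricted to $\dom(h)\cap\cX$) hypothesis, so every line of the argument must keep its two arguments inside $\dom(h)\cap\cX$, which is exactly why convexity of that set and the diameter bound $D$ are both needed.
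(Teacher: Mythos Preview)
Your proof is correct and is exactly the kind of routine argument the paper has in mind: the paper does not actually write out a proof of this proposition, remarking only that the results ``are easy to show from the Lipschitz differentiability of $g$ and $f_i,\, i\in [m]$.'' Your anchor-point-plus-diameter bound is the natural way to fill in those details.
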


Let the smooth part of $\cL_\beta$ be denoted as
$$F_\beta(\vx,\vy,\vz) = \cL_\beta(\vx,\vy,\vz) - h(\vx).$$
Based on \eqref{eq:lip}, we are able to show Lipschitz continuity of $\nabla_\vx F_\beta(\vx,\vy,\vz)$ with respect to $\vx$ for every $(\vy,\vz)$.
\begin{lemma}\label{lem:lip}
Assume \eqref{eq:lip-g}, \eqref{eq:lip-f}, and the boundedness of $\dom(h)\cap \cX$. Let $B_i$'s be given in Proposition \ref{prop-lip}. Then $\nabla_\vx F_{\beta_k}(\vx,\vy^k,\vz^k)$ is Lipschitz continuous on $\dom(h)\cap\cX$ in terms of $\vx$ with constant
\begin{equation}\label{eq:def-Lz}
L(\vz^k) = L_0+\beta_k\|\vA^\top \vA\|+\sum_{i=1}^m \big(\beta_k B_i(B_i+L_i)+L_i|z_i^k|\big).
\end{equation}
\end{lemma}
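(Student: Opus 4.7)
My plan is to write down $\nabla_\vx F_{\beta_k}$ explicitly, then bound the Lipschitz constant term by term, so the key preparation is to differentiate the penalty part $\Psi_{\beta_k}(\vx,\vz)$ with respect to $\vx$. From the definition \eqref{eq:def-psi} of $\psi_\beta$, I would first check that $\psi_\beta(\cdot,v)$ is continuously differentiable in $u$ with
\[
\partial_u \psi_\beta(u,v)=[\beta u+v]_+,
\]
by verifying that the two branches match in value and in derivative at the crossover $\beta u+v=0$. This yields the clean formula
\[
\nabla_\vx F_{\beta_k}(\vx,\vy^k,\vz^k)=\nabla g(\vx)+\vA^\top\vy^k+\beta_k\vA^\top(\vA\vx-\vb)+\sum_{i=1}^m [\beta_k f_i(\vx)+z_i^k]_+\,\nabla f_i(\vx).
\]

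Next I would estimate the Lipschitz modulus of each summand on $\dom(h)\cap\cX$. The first two terms contribute $L_0$ and $\beta_k\|\vA^\top\vA\|$, respectively, from \eqref{eq:lip-g} and the linearity of $\vA^\top\vA\vx$. The linear term $\vA^\top\vy^k$ is constant in $\vx$, hence contributes $0$. For the sum, I would fix $i$ and apply the standard add-and-subtract trick
\[
\bigl\|[\beta_k f_i(\hat{\vx})+z_i^k]_+\nabla f_i(\hat{\vx})-[\beta_k f_i(\tilde{\vx})+z_i^k]_+\nabla f_i(\tilde{\vx})\bigr\|
\]
\[
\le \bigl|[\beta_k f_i(\hat{\vx})+z_i^k]_+-[\beta_k f_i(\tilde{\vx})+z_i^k]_+\bigr|\,\|\nabla f_i(\hat{\vx})\| +\bigl|[\beta_k f_i(\tilde{\vx})+z_i^k]_+\bigr|\,\|\nabla f_i(\hat{\vx})-\nabla f_i(\tilde{\vx})\|.
\]

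The first product is handled using that $[\cdot]_+$ is $1$-Lipschitz together with the Lipschitz bound $|f_i(\hat{\vx})-f_i(\tilde{\vx})|\le B_i\|\hat{\vx}-\tilde{\vx}\|$ from \eqref{eq:lip-fv} and the gradient bound $\|\nabla f_i(\hat{\vx})\|\le B_i$ from \eqref{eq:bd-f}; this contributes $\beta_k B_i^2\|\hat{\vx}-\tilde{\vx}\|$. For the second product I use $|[\beta_k f_i(\tilde{\vx})+z_i^k]_+|\le \beta_k B_i+|z_i^k|$, again from \eqref{eq:bd-f}, and \eqref{eq:lip-f} to get $(\beta_k B_i+|z_i^k|)L_i\|\hat{\vx}-\tilde{\vx}\|$. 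Summing these two contributions yields the per-$i$ constant $\beta_k B_i(B_i+L_i)+L_i|z_i^k|$, and summing over $i$ together with the bounds for the first three terms produces exactly $L(\vz^k)$ in \eqref{eq:def-Lz}.

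The only mildly delicate step is the differentiability of $\psi_\beta(u,v)$ in $u$ across the kink $\beta u+v=0$; everything else is a clean application of the triangle inequality, $1$-Lipschitzness of $[\cdot]_+$, and the bounds from Proposition \ref{prop-lip}. I do not foresee any real obstacle beyond bookkeeping.
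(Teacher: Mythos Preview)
Your proposal is correct and follows essentially the same approach as the paper: both compute $\partial_u\psi_\beta(u,v)=[\beta u+v]_+$, apply the add-and-subtract trick to each term $[\beta_k f_i(\vx)+z_i^k]_+\nabla f_i(\vx)$, and use the $1$-Lipschitzness of $[\cdot]_+$ together with the bounds from Proposition~\ref{prop-lip} to obtain the per-$i$ contribution $\beta_k B_i(B_i+L_i)+L_i|z_i^k|$. The only cosmetic difference is that you first write out the full gradient formula for $F_{\beta_k}$, whereas the paper works directly with the composite pieces $h_i(\vx,z_i)=\psi_\beta(f_i(\vx),z_i)$.
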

\begin{proof}
For ease of description, we let $\beta=\beta_k$ and $(\vy,\vz)=(\vy^k,\vz^k)$ in the proof. First we notice that $\frac{\partial}{\partial u}\psi_\beta(u,v)=[\beta u+v]_+$, and thus for any $v$,
$$\left|\frac{\partial}{\partial u}\psi_\beta(\hat{u},v)-\frac{\partial}{\partial u}\psi_\beta(\tilde{u},v)\right|\le \beta|\hat{u}-\tilde{u}|,\,\forall \hat{u},\tilde{u}.$$
Let $h_i(x,z_i) = \psi_\beta(f_i(\vx),z_i),\,i=1,\ldots,m$. Then
\begin{align*}
&\|\nabla_\vx h_i(\hat{\vx},z_i)-\nabla_\vx h_i(\tilde{\vx},z_i)\|\\
=&\big\|\frac{\partial}{\partial u}\psi_\beta(f_i(\hat{\vx}),z_i)\nabla f_i(\hat{\vx})-\frac{\partial}{\partial u}\psi_\beta(f_i(\tilde{\vx}),z_i)\nabla f_i(\tilde{\vx})\big\|\\
\le&\big\|\frac{\partial}{\partial u}\psi_\beta(f_i(\hat{\vx}),z_i)\nabla f_i(\hat{\vx})-\frac{\partial}{\partial u}\psi_\beta(f_i(\tilde{\vx}),z_i)\nabla f_i(\hat{\vx})\big\|\\
&+\big\|\frac{\partial}{\partial u}\psi_\beta(f_i(\tilde{\vx}),z_i)\nabla f_i(\hat{\vx})-\frac{\partial}{\partial u}\psi_\beta(f_i(\tilde{\vx}),z_i)\nabla f_i(\tilde{\vx})\big\|\\
\le& \beta|f_i(\hat{\vx})-f_i(\tilde{\vx})|\cdot\|\nabla f_i(\hat{\vx})\|+\big|\frac{\partial}{\partial u}\psi_\beta(f_i(\tilde{\vx}),z_i)\big|\cdot\|\nabla f_i(\hat{\vx})-\nabla f_i(\tilde{\vx})\|\\
\le & \beta B_i^2\|\hat{\vx}-\tilde{\vx}\|+ L_i(\beta B_i + |z_i|) \|\hat{\vx}-\tilde{\vx}\|.
\end{align*}
Hence,
\begin{align*}
&\|\nabla_\vx F_\beta(\hat{\vx},\vy,\vz)-\nabla_\vx F_\beta(\tilde{\vx},\vy,\vz)\|\\
\le& \|\nabla g(\hat{\vx})-\nabla g(\tilde{\vx})\|+\beta\|\vA^\top \vA(\hat{\vx}-\tilde{\vx})\| + \sum_{i=1}^m \|\nabla_\vx h_i(\hat{\vx},z_i)-\nabla_\vx h_i(\tilde{\vx},z_i)\|\\
\le & \left(L_0+\beta\|\vA^\top \vA\|+\sum_{i=1}^m \big[\beta B_i^2+ L_i(\beta B_i + |z_i|)\big]\right)\|\hat{\vx}-\tilde{\vx}\|,
\end{align*}
which completes the proof.
\end{proof}

Therefore, letting
$\phi(\vx) = F_{\beta_k}(\vx,\vy^k ,\vz^k  )$ and $\psi(\vx) = h(\vx)+\iota_\cX(\vx),$
we can apply Nesterov's optimal first-order method in Algorithm \ref{alg:apg} to find $\vx^{k+1}$ in \eqref{eq:ialm-x}. From Theorem \ref{thm:rate-apg}, we have the following results. Note that if the strong convexity constant $\mu=0$, the problem is just convex.

\begin{lemma}\label{lem:sub-iter}
Assume that $g$ is strongly convex with modulus $\mu\ge0$. Given $\vareps_k>0$, if we start from $\vx^k $ and run Algorithm \ref{alg:apg}, then at most $t_k$ iterations are needed to produce $\vx^{k+1}$ such that \eqref{eq:ialm-x} holds,
where 
\begin{equation}\label{eq:tk-cvx}
t_k = \left\{
\begin{array}{ll}
\left\lceil\dfrac{\dist(\vx^k ,\cX_k^*)\sqrt{2L(\vz^k  )}}{\sqrt{\vareps_k}}\right\rceil, & \text{ if }\mu=0,\\[0.5cm]
\left\lceil\dfrac{\log\left(\frac{L(\vz^k  )+\mu}{2\vareps_k}[\dist(\vx^k ,\cX_k^*)]^2\right)}{\log1/\big(1-\sqrt{\frac{\mu}{L(\vz^k  )}}\big)}\right\rceil, & \text{ if }\mu>0,
\end{array}
\right.
\end{equation}
and $\cX_k^*$ denotes the set of optimal solutions to $\min_{\vx\in \cX}\cL_{\beta_k}(\vx,\vy^k ,\vz^k  )$.
 
\end{lemma}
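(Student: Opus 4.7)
The plan is to recast the $\vx$-subproblem $\min_{\vx\in\cX}\cL_{\beta_k}(\vx,\vy^k,\vz^k)$ into the composite form \eqref{eq:comp-prob} and then read off the iteration count from Theorem~\ref{thm:rate-apg}. Set $\phi(\vx)=F_{\beta_k}(\vx,\vy^k,\vz^k)$ and $\psi(\vx)=h(\vx)+\iota_\cX(\vx)$, so the subproblem is $\min_\vx \phi(\vx)+\psi(\vx)$. Lemma~\ref{lem:lip} already gives that $\nabla\phi$ is Lipschitz continuous on $\dom(h)\cap\cX$ with constant $L(\vz^k)$ from \eqref{eq:def-Lz}, and $\psi$ is simple and closed convex since $h$ is simple and the proximal operator of $\iota_\cX$ is the projection onto $\cX$.

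Before invoking the rates, I would verify that $\phi$ inherits strong convexity modulus $\mu$ from $g$. The additional pieces in $F_{\beta_k}$, namely $\vy^\top(\vA\vx-\vb)+\frac{\beta_k}{2}\|\vA\vx-\vb\|^2+\Psi_{\beta_k}(\vx,\vz^k)$, are all convex (the last by convexity of $\psi_{\beta_k}(\cdot,v)$ in its first argument composed with the convex $f_i$'s with nonnegative outer-slope $[\beta_k u+v]_+$), so $\phi-\frac{\mu}{2}\|\cdot\|^2$ remains convex. Therefore Algorithm~\ref{alg:apg} initialized at $\hat\vx^0=\vx^0=\vx^k$ produces iterates to which Theorem~\ref{thm:rate-apg} directly applies.

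The remaining step is to translate each rate into an iteration count. Pick $\vx^*_k$ to be the projection of $\vx^k$ onto $\cX_k^*$ (a singleton in the strongly convex case), so $\|\vx^k-\vx^*_k\|=\dist(\vx^k,\cX_k^*)$. In the convex case $\mu=0$, the bound \eqref{eq:optimal-rate-cvx} with $L_\phi=L(\vz^k)$ gives an objective gap of at most $\frac{2L(\vz^k)[\dist(\vx^k,\cX_k^*)]^2}{t^2}$ after $t$ inner iterations, and solving $\frac{2L(\vz^k)[\dist(\vx^k,\cX_k^*)]^2}{t^2}\le\vareps_k$ for $t$ yields the first branch of \eqref{eq:tk-cvx}. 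In the strongly convex case $\mu>0$, \eqref{eq:optimal-rate-scvx} gives a gap at most $\frac{L(\vz^k)+\mu}{2}[\dist(\vx^k,\cX_k^*)]^2\bigl(1-\sqrt{\mu/L(\vz^k)}\bigr)^t$; taking logarithms in the inequality $\le\vareps_k$ and solving for $t$ yields the second branch.

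I do not foresee a genuine obstacle: the statement is essentially a repackaging of Lemma~\ref{lem:lip} with Theorem~\ref{thm:rate-apg}. The only subtlety is that in the convex case the set $\cX_k^*$ need not be a singleton, which is handled by choosing $\vx^*_k$ as the nearest minimizer so that the constant in \eqref{eq:optimal-rate-cvx} becomes $\dist(\vx^k,\cX_k^*)$ rather than an arbitrary $\|\vx^k-\vx^*\|$. The ceiling operation in \eqref{eq:tk-cvx} accounts for integrality of the iteration count.
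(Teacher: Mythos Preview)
Your proposal is correct and follows exactly the approach the paper takes: the paper states this lemma as an immediate consequence of Theorem~\ref{thm:rate-apg} applied with $\phi(\vx)=F_{\beta_k}(\vx,\vy^k,\vz^k)$ and $\psi(\vx)=h(\vx)+\iota_\cX(\vx)$, using the Lipschitz constant $L(\vz^k)$ from Lemma~\ref{lem:lip}, and does not write out a separate proof. You have simply made explicit the verification of strong convexity of $\phi$ and the choice of $\vx_k^*$ as the nearest minimizer, details the paper leaves to the reader.
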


Below we specify the sequences $\{\beta_k\}, \{\rho_k\}$ and $\{\vareps_k\}$ for a given $\vareps>0$, and through combining Theorem \ref{thm:ialm-rate} and Lemma \ref{lem:sub-iter}, we give the iteration complexity results of iALM for producing an $\vareps$-optimal solution. We study two cases. In the first case, a constant penalty parameter is used, 
and in the second case, we geometrically increase $\beta_k$ and $\rho_k$. 

Given $\vareps>0$, we set $\{\beta_k\}$ and $\{\rho_k\}$ according to one of the follows:

\begin{setting}[constant penalty]\label{set-penalty} Let $K$ be a positive integer number and $C_\beta$ a positive real number. Set $$\rho_k=\beta_k=\frac{C_\beta}{K\vareps},\,\forall \, 0\le k < K.$$ 
\end{setting}

\begin{setting}[geometrically increasing penalty]\label{set-ialm} Let $K$ be a positive integer number, $C_\beta$ a positive real number, and $\sigma>1$. Set 
\begin{equation}\label{eq:def-beta-g}
\beta_g = \frac{C_\beta}{\vareps}\frac{\sigma-1}{\sigma^K-1},
\end{equation}
and
$$ \rho_k=\beta_k=\beta_g\sigma^k, \,\forall\, 0\le k < K.$$ 
\end{setting}

Note that if $K=1$, the above two settings are the same, and in this case, Algorithm \ref{alg:ialm} simply reduces to the quadratic penalty method. For either of the above two settings, we have $\sum_{k=0}^{K-1}\rho_k=\frac{C_\beta}{\vareps}.$ 


From \eqref{eq:def-Lz}, we see that the Lipschitz constant depends on $\vz^k$. Hence, from \eqref{eq:tk-cvx}, to solve the $\vx$-subproblem to the accuracy $\vareps_k$, the number of gradient evaluations will depend on $\vz^k  $. Below we show that if $\vareps_k$ is sufficiently small, $\vz^k$ can be bounded and thus so is $L(\vz^k)$. 

\begin{lemma}\label{lem:bd-z-Lz}
Let $\{(\vx^k,\vy^k,\vz^k)\}_{k=0}^K$ be the sequence generated from Algorithm \ref{alg:ialm} with $\{\beta_k\}$ and $\{\rho_k\}$ set according to either Setting \ref{set-penalty} or Setting \ref{set-ialm}. If $\vy^0=\vzero, \vz^0=\vzero$, and $\vareps_k$'s are chosen such that 
\begin{equation}\label{eq:req-eps}
\sum_{k=0}^{K-1}\rho_k\vareps_k\le \frac{C_\vareps}{2},
\end{equation}
 then
\begin{align}\label{eq:bd-Lzk}
L(\vz^k)\le L_*+\beta_k H,\,\forall \, 0\le k\le K,
\end{align}
where 
$$ H=\|\vA^\top \vA\|+ \sum_{i=1}^m  B_i(B_i+L_i),\ L_*=L_0+ \|\vell\| \left(\|\vy^*\|+2\|\vz^*\|+\sqrt{C_\vareps}\right)$$
and $\vell$ is given in \eqref{eq:vec-ell}.
\end{lemma}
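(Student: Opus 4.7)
The plan is to extract a bound on $\|\vz^k\|$ directly from the one-iteration progress inequality proved in Theorem \ref{thm:1iter-ialm}, and then feed it into the definition \eqref{eq:def-Lz} of $L(\vz^k)$. Note that in both Setting \ref{set-penalty} and Setting \ref{set-ialm} we have $\rho_k=\beta_k$, so the two nonnegative terms $\tfrac{\beta_k-\rho_k}{2}\|\vr^{k+1}\|^2$ and $\tfrac{\beta_k-\rho_k}{2\rho_k^2}\|\vz^{k+1}-\vz^k\|^2$ in \eqref{eq:ialm-ineq2} vanish, leaving a clean telescoping identity once we multiply by $\rho_k$.

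First, I would mimic the bookkeeping used in the proof of Theorem \ref{thm:ialm-rate}: take $\vx=\vx^*$, $\vy=\vy^*$, $\vz=\vz^*$ (using $\vz^*\ge\vzero$ from the KKT conditions), multiply \eqref{eq:ialm-ineq2} by $\rho_k$, and sum over $k=0,1,\ldots,\ell-1$ for an \emph{arbitrary} $\ell\in\{1,\ldots,K\}$. Using $\vy^0=\vz^0=\vzero$ and the fact that the $\|\vy^{k+1}-\vy^*\|^2$ and $\|\vz^{k+1}-\vz^*\|^2$ terms telescope, this yields
\begin{equation*}
\sum_{k=0}^{\ell-1}\rho_k\left[f_0(\vx^{k+1})-f_0(\vx^*)+(\vy^*)^\top\vr^{k+1}+\sum_{i=1}^m z_i^* f_i(\vx^{k+1})\right]+\tfrac{1}{2}\|\vy^\ell-\vy^*\|^2+\tfrac{1}{2}\|\vz^\ell-\vz^*\|^2\le \tfrac{1}{2}\|\vy^*\|^2+\tfrac{1}{2}\|\vz^*\|^2+\sum_{k=0}^{\ell-1}\rho_k\vareps_k.
\end{equation*}
By the KKT optimality inequality \eqref{eq:opt} applied with $\vx=\vx^{k+1}\in\cX$, each bracketed summand on the left is nonnegative and can be dropped. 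Invoking assumption \eqref{eq:req-eps}, which gives $\sum_{k=0}^{\ell-1}\rho_k\vareps_k\le C_\vareps/2$, this leaves
\begin{equation*}
\|\vz^\ell-\vz^*\|^2 \le \|\vy^*\|^2+\|\vz^*\|^2+C_\vareps,
\end{equation*}
so by the elementary bound $\sqrt{a^2+b^2+c^2}\le a+b+c$ for nonnegatives and the triangle inequality,
\begin{equation*}
\|\vz^\ell\|\le\|\vz^*\|+\|\vy^*\|+\|\vz^*\|+\sqrt{C_\vareps}=\|\vy^*\|+2\|\vz^*\|+\sqrt{C_\vareps}
\end{equation*}
for every $\ell\in\{0,1,\ldots,K\}$ (the case $\ell=0$ is trivial since $\vz^0=\vzero$).

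Finally, I would apply Cauchy--Schwarz to bound $\sum_{i=1}^m L_i|z_i^\ell|\le\|\vell\|\,\|\vz^\ell\|\le\|\vell\|(\|\vy^*\|+2\|\vz^*\|+\sqrt{C_\vareps})$, and substitute into \eqref{eq:def-Lz}, using Proposition \ref{prop-lip} to retain the $B_i$ terms, to conclude
\begin{equation*}
L(\vz^\ell)\le L_0+\|\vell\|\bigl(\|\vy^*\|+2\|\vz^*\|+\sqrt{C_\vareps}\bigr)+\beta_\ell\Bigl(\|\vA^\top\vA\|+\sum_{i=1}^m B_i(B_i+L_i)\Bigr)=L_*+\beta_\ell H,
\end{equation*}
which is exactly \eqref{eq:bd-Lzk}. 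The only subtle point — really the sole place any thought is needed — is verifying that the sign-sensitive term $\sum_k\rho_k[\ldots]$ arising from summing \eqref{eq:ialm-ineq2} is nonnegative so that it can be discarded; this is precisely the content of the KKT inequality \eqref{eq:opt}, and it works uniformly for every partial sum, which is what lets us conclude the bound at every index $\ell\le K$ rather than only at $\ell=K$.
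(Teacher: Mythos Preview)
Your proposal is correct and follows essentially the same approach as the paper: both set $(\vx,\vy,\vz)=(\vx^*,\vy^*,\vz^*)$ in the one-iteration inequality, use \eqref{eq:opt} to drop the bracketed Lagrangian term, telescope to bound $\|\vz^\ell-\vz^*\|$, split the square root via $\sqrt{a^2+b^2+c^2}\le a+b+c$, and finish with Cauchy--Schwarz on $\sum_i L_i|z_i^\ell|$. The only cosmetic difference is that the paper drops the bracketed term before summing rather than after, and keeps $\|\vy^\ell-\vy^*\|^2$ on the left a bit longer.
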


\begin{proof}
Letting $(\vx,\vy,\vz)=(\vx^*,\vy^*,\vz^*)$ in \eqref{eq:ialm-ineq2-1} and using \eqref{eq:opt}, we have 
$$\frac{1}{2}\|\vy^{k+1} -\vy^*\|^2 + \frac{1}{2}\|\vz^{k+1} -\vz^*\|^2
\le  \frac{1}{2}\|\vy^k -\vy^*\|^2+ \frac{1}{2}\|\vz^k  -\vz^*\|^2+\rho_k\vareps_k.$$
Summing the above inequality yields 
$$\frac{1}{2}\|\vy^{k}-\vy^*\|^2 + \frac{1}{2}\|\vz^{k}-\vz^*\|^2
\le  \frac{1}{2}\|\vy^0-\vy^*\|^2+ \frac{1}{2}\|\vz^0-\vz^*\|^2+\sum_{t=0}^{k-1}\rho_t\vareps_t, \, \forall \, 0\le k \le K,$$
which implies
$$\|\vz^k  \| \le \|\vz^*\|+\sqrt{\|\vy^0-\vy^*\|^2+\|\vz^0-\vz^*\|^2+2\sum_{t=0}^{k-1}\rho_t\vareps_t}.$$
Since $\|\vu\|\le \|\vu\|_1$ for any vector $\vu$, we have from the above inequality that
\begin{equation}\label{eq:bd-zk0}
\|\vz^k  \| \le  \|\vz^*\|+\|\vy^0-\vy^*\|+\|\vz^0-\vz^*\|+\sqrt{2\sum_{t=0}^{K-1}\rho_t\vareps_t},\, \forall \, 0\le k \le K.
\end{equation}

Hence, if $\vy^0=\vzero$ and $\vz^0=\vzero$, and \eqref{eq:req-eps} holds, it follows from the above inequality that
\begin{equation}\label{eq:bd-zk}
\|\vz^k  \| \le  \|\vy^*\|+2\|\vz^*\|+\sqrt{C_\vareps},\, \forall \, 0\le k \le K,
\end{equation}
By the Cauchy-Schwartz inequality, we have from \eqref{eq:def-Lz} that for any $0\le k\le K$,
$$L(\vz^k  ) \le L_0+\beta_k H + \|\vz^k  \|\cdot\|\vell\|$$
which together with \eqref{eq:bd-zk} gives the result in \eqref{eq:bd-Lzk}.
\end{proof}

\textbf{``Optimal'' subproblem accuracy parameters.} If $t_k$ gradient evaluations are required to produce $\vx^{k+1}$, then the total number of gradient evaluations is 
$T_K=\sum_{k=0}^{K-1}t_k$ to generate $\{\vx^k\}_{k=1}^K$. Given $\vareps>0$, and $\{\beta_k\}$,$\{\rho_k\}$ set according to either Setting \ref{set-penalty} or Setting \ref{set-ialm}, we can choose $\{\vareps_k\}$ to minimize $T_K$ subject to the condition in \eqref{eq:req-eps}. 

When $\mu=0$, we solve the following problem:
\begin{equation*}
\Min_{\veps>\vzero} \sum_{k=0}^{K-1}\frac{\dist(\vx^k,\cX_k^*)\sqrt{L(\vz^k)}}{\sqrt{\vareps_k}}, \st \sum_{k=0}^{K-1}\beta_k\vareps_k \le \frac{C_\vareps}{2},
\end{equation*}
where $\veps=[\vareps_0,\ldots,\vareps_{K-1}].$ Through formulating the KKT system of the above problem, one can easily find the optimal $\veps$ given by
\begin{equation}\label{eq:best-eps-cvx}
\vareps_k=\frac{C_\vareps}{2}\frac{[\dist(\vx^k,\cX_k^*)]^{\frac{2}{3}}[L(\vz^k)]^{\frac{1}{3}}}{\beta_k^{\frac{2}{3}}\sum_{t=0}^{K-1}\beta_t^{\frac{1}{3}}[\dist(\vx^t,\cX_t^*)]^{\frac{2}{3}}[L(\vz^t)]^{\frac{1}{3}}},\,\forall \, 0\le k<K.
\end{equation}

When $\mu>0$, we solve the problem below:
\begin{equation}\label{eq:prob-find-eps-scvx}
\Min_{\veps>\vzero}\sum_{k=0}^{K-1} \sqrt{\frac{L(\vz^k)}{\mu}}\log\left(\frac{L(\vz^k)+\mu}{2\vareps_k}[\dist(\vx^k,\cX_k^*)]^2\right),\st \sum_{k=0}^{K-1}\beta_k\vareps_k \le \frac{C_\vareps}{2},
\end{equation}
whose optimal solution is given by
\begin{equation}\label{eq:best-eps-scvx}
\vareps_k=\frac{C_\vareps}{2}\frac{\sqrt{L(\vz^k)}}{\beta_k\sum_{t=0}^{K-1}\sqrt{L(\vz^t)}},\,\forall \, 0\le k<K.
\end{equation}
Note that the summand in the objective of \eqref{eq:prob-find-eps-scvx} is not exactly the same as that in the second inequality of \eqref{eq:tk-cvx}. They are close if $\mu\ll L(\vz^k)$ since $\log(1+a)=a + o(a)$. 

The optimal $\vareps_k$ given in \eqref{eq:best-eps-cvx} and \eqref{eq:best-eps-scvx} depends on $\dist(\vx^k,\cX_k^*)$ and the future points $\vz^{k+1},\ldots,\vz^{K-1}$, which are unknown at iteration $k$. We do not assume these unknowns. Instead, we set $\vareps_k$ in two different ways. One way is to simply set \begin{equation}\label{eq:const-eps}
\vareps_k=\frac{\vareps }{2}\frac{C_\vareps}{C_\beta},\,\forall \, 0\le k<K,
\end{equation}
for both cases of $\mu=0$ and $\mu>0$. Another way is to
let
\begin{equation}\label{eq:best-eps-cvx-app}
\vareps_k=\frac{C_\vareps}{2}\frac{1}{\beta_k^{\frac{1}{3}}\sum_{t=0}^{K-1}\beta_t^{\frac{2}{3}}},\,\forall \, 0\le k<K,
\end{equation}
for the case of $\mu=0$,
and
\begin{equation}\label{eq:best-eps-scvx-app}
\vareps_k=\frac{C_\vareps}{2}\frac{1}{\sqrt{\beta_k}\sum_{t=0}^{K-1}\sqrt{\beta_t}},\,\forall \, 0\le k<K,
\end{equation}
for the case of $\mu>0$.
We see that if $\beta_k H$ dominates $L_*$ and $\dist(\vx^k,\cX_k^*)$ is roughly the same for all $k$'s, then $\{\vareps_k\}$ in \eqref{eq:best-eps-cvx-app} and \eqref{eq:best-eps-scvx-app} well approximate those in \eqref{eq:best-eps-cvx} and \eqref{eq:best-eps-scvx}. If $\{\beta_k\}$ and $\{\rho_k\}$ are set according to Setting \ref{set-penalty}, i.e., constant parameters, then the $\{\vareps_k\}$ in both \eqref{eq:best-eps-cvx-app} and \eqref{eq:best-eps-scvx-app} is constant as in \eqref{eq:const-eps}. 

Plugging these parameters into \eqref{eq:tk-cvx}, we have the following estimates on the total number of gradient evaluations.

\begin{theorem}[Iteration complexity with constant penalty and constant error]\label{thm:it-comp-const}
For any given $\vareps>0$, let $K$ be a positive integer number and $C_\beta, C_\vareps$ two positive real numbers. Set $\{\beta_k\}$ and $\{\rho_k\}$ according to Setting \ref{set-penalty} and $\{\vareps_k\}$ by \eqref{eq:const-eps}. Let $\bar{\vx}^K$ be given in \eqref{eq:barx-K}. Then 
\begin{subequations}\label{eq:error-barx-K}
\begin{align}
\big|f_0(\bar{\vx}^{K})-f_0(\vx^*)\big| \le \frac{\vareps\big(2\|\vy^*\|^2+ 2\|\vz^*\|^2\big)}{C_\beta}+\frac{\vareps}{2}\frac{C_\vareps}{C_\beta},\label{eq:error-barx-K-obj}\\
\|\vA\bar{\vx}^{K}-\vb\| + \big\| [\vf(\bar{\vx}^{K})]_+ \big\|\le  \frac{\vareps\big[(1+\|\vy^*\|)^2+ (1+\|\vz^*\|)^2\big]}{2C_\beta}+\frac{\vareps}{2}\frac{C_\vareps}{C_\beta}.\label{eq:error-barx-K-res}
\end{align}
\end{subequations}
Assume $\mu\le \frac{L_0}{4}$. Then Algorithm \ref{alg:ialm} can produce $\bar{\vx}^K$ by evaluating gradients of $g, f_i, \,i\in [m]$ in at most $T_K$ times, where
\begin{equation}\label{eq:T-penaly-cvx}
T_K\le \left\lceil2DK\sqrt{\frac{C_\beta}{C_\vareps}}\left(\sqrt{\frac{L_*}{\vareps}}+\frac{1}{\vareps}\sqrt{\frac{C_\beta H}{K}}\right)+K\right\rceil, \text{ if }\mu=0,
\end{equation}
and
\begin{equation}\label{eq:T-penaly-strcvx}
T_K\le \left\lceil2K\left(\sqrt{\frac{L_*}{\mu}}+\sqrt{\frac{C_\beta H}{\mu K\vareps}}\right)\log\left(\frac{D^2 C_\beta}{C_\vareps}\big(\frac{L_*+\mu}{\vareps}+\frac{C_\beta H}{K\vareps^2}\big)\right) + K\right\rceil, \text{ if }\mu>0.
\end{equation}

\end{theorem}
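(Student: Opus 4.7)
The plan breaks into two independent pieces: (i) deriving the error bounds \eqref{eq:error-barx-K-obj}--\eqref{eq:error-barx-K-res}, and (ii) estimating the total gradient-evaluation count $T_K$ in the two convexity regimes.

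For piece (i), I would simply specialize Theorem \ref{thm:ialm-rate}. Under Setting \ref{set-penalty} we have $\rho_k=\beta_k=C_\beta/(K\vareps)$, hence $\sum_{t=0}^{K-1}\rho_t=C_\beta/\vareps$, while the constant choice \eqref{eq:const-eps} gives
\[
\sum_{k=0}^{K-1}\rho_k\vareps_k = K\cdot \tfrac{C_\beta}{K\vareps}\cdot\tfrac{\vareps}{2}\tfrac{C_\vareps}{C_\beta}=\tfrac{C_\vareps}{2}.
\]
Substituting these two sums into \eqref{eq:ialm-rate-obj} and \eqref{eq:ialm-rate-res} produces \eqref{eq:error-barx-K-obj} and \eqref{eq:error-barx-K-res} directly. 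The same identity $\sum_k\rho_k\vareps_k=C_\vareps/2$ also verifies hypothesis \eqref{eq:req-eps} of Lemma \ref{lem:bd-z-Lz}, which is what unlocks piece (ii).

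For piece (ii), the strategy is to bound the per-subproblem count $t_k$ from Lemma \ref{lem:sub-iter} uniformly in $k$ and then sum over $k$. Two basic ingredients feed into the formula \eqref{eq:tk-cvx}: first, both $\vx^k$ and any minimizer in $\cX_k^*$ lie in $\dom(h)\cap\cX$, so $\dist(\vx^k,\cX_k^*)\le D$; second, Lemma \ref{lem:bd-z-Lz} gives $L(\vz^k)\le L_*+\beta_k H=L_*+C_\beta H/(K\vareps)$. In the convex case $\mu=0$, plugging these and $\vareps_k=\vareps C_\vareps/(2C_\beta)$ into the first branch of \eqref{eq:tk-cvx} yields
\[
t_k\le \Big\lceil D\sqrt{\tfrac{4C_\beta}{\vareps C_\vareps}\big(L_*+\tfrac{C_\beta H}{K\vareps}\big)}\Big\rceil.
\]
Splitting by $\sqrt{a+b}\le\sqrt{a}+\sqrt{b}$, summing over $k=0,\ldots,K-1$, and absorbing the $K$ ceiling increments into a single $+K$ gives exactly \eqref{eq:T-penaly-cvx}.

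The strongly convex case ($\mu>0$) is where the real bookkeeping sits, and I expect it to be the main obstacle. The key elementary fact is $-\log(1-x)\ge x$ for $x\in(0,1)$, which gives $1/\log\bigl(1/(1-\sqrt{\mu/L(\vz^k)})\bigr)\le \sqrt{L(\vz^k)/\mu}$. The assumption $\mu\le L_0/4$, together with $L(\vz^k)\ge L_0$ from \eqref{eq:def-Lz}, certifies $\sqrt{\mu/L(\vz^k)}\le 1/2$ throughout the run, so this log inequality is legitimate at every iteration. Applying $\sqrt{a+b}\le\sqrt{a}+\sqrt{b}$ to the bound on $L(\vz^k)$ splits the prefactor into $\sqrt{L_*/\mu}+\sqrt{C_\beta H/(\mu K\vareps)}$. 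For the logarithmic argument, direct substitution gives
\[
\tfrac{L(\vz^k)+\mu}{2\vareps_k}D^2\le \tfrac{D^2 C_\beta}{C_\vareps}\Big(\tfrac{L_*+\mu}{\vareps}+\tfrac{C_\beta H}{K\vareps^2}\Big),
\]
which is exactly the quantity inside the log of \eqref{eq:T-penaly-strcvx}. Summing the resulting uniform bound on $t_k$ over $k=0,\ldots,K-1$, rolling the ceiling increments into a single $+K$, and collecting constants yields \eqref{eq:T-penaly-strcvx} and completes the proof.
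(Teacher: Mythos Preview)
Your proposal is correct and follows the paper's proof almost verbatim: specialize Theorem~\ref{thm:ialm-rate} for the error bounds, then bound each $t_k$ from Lemma~\ref{lem:sub-iter} using $\dist(\vx^k,\cX_k^*)\le D$ and the Lipschitz estimate $L(\vz^k)\le L_*+\beta_k H$ from Lemma~\ref{lem:bd-z-Lz}, split via $\sqrt{a+b}\le\sqrt a+\sqrt b$, and sum.

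The one substantive deviation is in the strongly convex denominator bound. The paper rewrites $\log\frac{1}{1-x}=\log\bigl(1+\frac{x}{1-x}\bigr)$ and invokes $\log(1+a)\ge a/2$ for $a\in(0,1]$; the hypothesis $\mu\le L_0/4$ is used precisely here, to guarantee $\frac{x}{1-x}\le 1$ with $x=\sqrt{\mu/L(\vz^k)}$, and this yields $1/\log\frac{1}{1-x}\le 2\sqrt{L(\vz^k)/\mu}$, producing the factor $2$ in \eqref{eq:T-penaly-strcvx}. Your inequality $-\log(1-x)\ge x$ is simpler and sharper, giving $1/\log\frac{1}{1-x}\le \sqrt{L(\vz^k)/\mu}$, so you actually obtain \eqref{eq:T-penaly-strcvx} with a factor of $2$ to spare; moreover your inequality needs only $x\in(0,1)$, so the assumption $\mu\le L_0/4$ is not truly needed in your argument (any $\mu<L_0\le L(\vz^k)$ suffices for the formula in \eqref{eq:tk-cvx} to make sense).
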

\begin{proof}
The results in \eqref{eq:error-barx-K} directly follows from Theorem \ref{thm:ialm-rate} and the settings of $\{\beta_k\}$, $\{\rho_k\}$, and $\{\vareps_k\}$. For the total number of gradient evaluations, we use the inequalities in \eqref{eq:tk-cvx}. First, for the case of $\mu=0$, from the first inequality of \eqref{eq:tk-cvx} and the parameter setting, it follows that the total number of gradient evaluations
\begin{equation}\label{eq:total-grad-cvx-adp1}
T_K\le \sum_{k=0}^{K-1}\frac{\dist(\vx^k ,\cX_k^*)\sqrt{2(L_*+\frac{C_\beta H}{K\vareps})}}{\sqrt{\vareps/2}\sqrt{C_\vareps/C_\beta}}+K.
\end{equation}
Since $\sqrt{a+b}\le \sqrt{a}+\sqrt{b}$ for any two nonnegative numbers $a,b$, we have from the above inequality and by noting $\dist(\vx^k ,\cX_k^*)\le D$ that
\begin{align*}
T_K \le 2D\sqrt{\frac{C_\beta}{C_\vareps}}\sum_{k=0}^{K-1}\frac{\sqrt{L_*}+\sqrt{\frac{C_\beta H}{K\vareps}}}{\sqrt{\vareps}}+K=2DK\sqrt{\frac{C_\beta}{C_\vareps}}\left(\sqrt{\frac{L_*}{\vareps}}+\frac{1}{\vareps}\sqrt{\frac{C_\beta H}{K}}\right)+K,
\end{align*}
which gives \eqref{eq:T-penaly-cvx}.

For the case of $\mu>0$, we first note that for any $0<a\le1$, it holds $\log(1+a)\ge a-\frac{a^2}{2}\ge\frac{a}{2}$. Hence, if $\mu\le \frac{L_0}{4}$, we have $\mu\le \frac{L(\vz^k)}{4}$ and $\frac{\sqrt{\mu/L(\vz^k)}}{1-\sqrt{\mu/L(\vz^k)}}\le 1$. Therefore,
$$\log\frac{1}{1-\sqrt{\mu/L(\vz^k)}}=\log\left(1+\frac{\sqrt{\mu/L(\vz^k)}}{1-\sqrt{\mu/L(\vz^k)}}\right)\ge \frac{1}{2}\frac{\sqrt{\mu/L(\vz^k)}}{1-\sqrt{\mu/L(\vz^k)}},$$ and thus
\begin{equation}\label{eq:bd-denom}
\frac{1}{\log\frac{1}{1-\sqrt{\mu/L(\vz^k)}}}\le 2\sqrt{\frac{L(\vz^k)}{\mu}}\left(1-\sqrt{\mu/L(\vz^k)}\right) \le 2\sqrt{\frac{L(\vz^k)}{\mu}}.
\end{equation} Using the above inequality and the second inequality of \eqref{eq:tk-cvx}, we have that the total number of gradient evaluations
\begin{equation}
T_K\le  \sum_{k=0}^{K-1}2\sqrt{\frac{L_*+\frac{C_\beta H}{K\vareps}}{\mu}}\log\left(\frac{L_*+\frac{C_\beta H}{K\vareps}+\mu}{\vareps C_\vareps/C_\beta}[\dist(\vx^k ,\cX_k^*)]^2\right)+K.\label{eq:total-grad-scvx-adp1}
\end{equation}
Since $\sqrt{L_*+\frac{C_\beta H}{K\vareps}}\le \sqrt{L_*}+\sqrt{\frac{C_\beta H}{K\vareps}}$ and $\dist(\vx^k ,\cX_k^*)\le D$, the above inequality implies \eqref{eq:T-penaly-strcvx}.
This completes the proof.
\end{proof}

We make two observations below about the results in Theorem \ref{thm:it-comp-const}.
\begin{remark}\label{rm:iter-to-rate}
From the error bounds in \eqref{eq:error-barx-K}, we see that if 
\begin{equation}\label{eq:bd-C-beta1}
2 C_\beta \ge\max\big(4\|\vy^*\|^2+4\|\vz^*\|^2, (1+\|\vy^*\|)^2+(1+\|\vz^*\|)^2\big) + C_\vareps,
\end{equation} 
then $\bar{\vx}^K$ is an $\vareps$-optimal solution. Otherwise, the errors in \eqref{eq:error-barx-K} are multiples of $\vareps$. 
If we represent $\vareps$ by the total number $t$ of gradient evaluations, we can obtain the convergence rate result in terms of $t$. Let $C_\beta=C_\vareps$ and $K=1$ in \eqref{eq:T-penaly-cvx}. Then the total number of gradient evaluations is about $t = 2D\left(\sqrt{\frac{L_*}{\vareps}}+\frac{1}{\vareps}\sqrt{C_\beta H}\right).$
By quadratic formula, one can easily show  that
$$\vareps = \frac{\left(D\sqrt{L_*}+\sqrt{L_*D^2+2Dt\sqrt{C_\beta H}}\right)^2}{t^2}\le \frac{4L_*D^2}{t^2}+\frac{4D\sqrt{C_\beta H}}{t}.$$
Let $\hat{\vx}^t=\bar{\vx}^K$ to specify the dependence of the iterate on the number of gradient evaluations. Plugging the above $\vareps$ into \eqref{eq:error-barx-K}, we have
\begin{subequations}\label{eq:error-barx-K-rate}
\begin{align}
\big|f_0(\hat{\vx}^{t})-f_0(\vx^*)\big| \le \left(\frac{2\|\vy^*\|^2+ 2\|\vz^*\|^2}{C_\beta}+\frac{1}{2}\right)\left(\frac{4L_*D^2}{t^2}+\frac{4D\sqrt{C_\beta H}}{t}\right),\label{eq:error-barx-K-rate-obj}\\
\|\vA\hat{\vx}^{t}-\vb\| + \big\| [\vf(\hat{\vx}^{t})]_+ \big\|\le  \left(\frac{(1+\|\vy^*\|)^2+ (1+\|\vz^*\|)^2}{2C_\beta}+\frac{1}{2}\right)\left(\frac{4L_*D^2}{t^2}+\frac{4D\sqrt{C_\beta H}}{t}\right).\label{eq:error-barx-K-rate-res}
\end{align}
\end{subequations}
If there are no equality or inequality constraints, then $H=0$, $\vy^*=\vzero, \vz^*=\vzero$, and the rate of convergence in \eqref{eq:error-barx-K-rate-obj} matches with the optimal one in \eqref{eq:optimal-rate-cvx}; if the objective $f_0(\vx)\equiv 0$ and there are no inequality constraints, then $H=\|\vA^\top\vA\|$, $\vy^*=\vzero, \vz^*=\vzero$, $L_*=0$, and the rate of convergence with $C_\beta=2$ in \eqref{eq:error-barx-K-rate-res} roughly becomes 
$$\|\vA\hat{\vx}^t-\vb\|^2\le \frac{8\sqrt{2}D^2\|\vA^\top\vA\|}{t^2},$$
whose order is also optimal. Therefore, the order of convergence rate in \eqref{eq:error-barx-K-rate} is optimal, and so is the iteration complexity result in \eqref{eq:T-penaly-cvx}.

For the strongly convex case, if there are no equality or inequality constraints, the iteration complexity result in \eqref{eq:T-penaly-strcvx} is optimal by comparing it to \eqref{eq:optimal-rate-scvx}. With the existence of constraints and nonsmooth term in the objective, the order $1/\sqrt{\vareps}$ also appears to be optimal and is the best we can find in the literature; see the discussion in section \ref{sec:review}.

\end{remark}

\begin{remark}\label{remark:penalty-comp}
From both \eqref{eq:T-penaly-cvx} and \eqref{eq:T-penaly-strcvx}, we see that $T_1\le T_K,\,\forall K\ge 1$, i.e., $K=1$ is the best. Note that if $\vy^0=\vzero,\vz^0=\vzero$, and $K=1$, Algorithm \ref{alg:ialm} reduces to the quadratic penalty method by solving a single penalty problem. However, practically $K>1$ could be better since $\dist(\vx^k,\cX_k^*)$ usually decreases as $k$ increases. Hence, from \eqref{eq:total-grad-cvx-adp1} or \eqref{eq:total-grad-scvx-adp1}, $T_K$ can be smaller than $T_1$ if $K>1$; see our numerical results in Table \ref{table:cvx-med-Lip}. 
\end{remark}

\begin{theorem}[Iteration complexity with geometrically increasing penalty and constant error]\label{thm:iter-comp-geo-const-eps}
For any given $\vareps>0$, let $K$ be a positive integer number and $C_\beta, C_\vareps$ two positive real numbers. Set $\{\beta_k\}$ and $\{\rho_k\}$ according to Setting \ref{set-ialm} and $\{\vareps_k\}$ to \eqref{eq:const-eps}.  Assume $\mu\le \frac{L_0}{4}$. Let $\bar{\vx}^K$ be given in \eqref{eq:barx-K}. Then the inequalities in \eqref{eq:error-barx-K} hold, and Algorithm \ref{alg:ialm} can produce $\bar{\vx}^K$ by evaluating gradients of $g, f_i, \,i\in [m]$ in at most $T_K$ times, where  
\begin{equation}\label{eq:T-ialm-cvx-const-eps}
T_K\le \left\lceil 2D \sqrt{\frac{C_\beta}{C_\vareps}}\left(K\sqrt{\frac{L_*}{\vareps}}+\frac{\sqrt{C_\beta H(\sigma-1)}}{\vareps(\sqrt{\sigma}-1)}\right)+K\right\rceil, \text{ if }\mu=0,
\end{equation}
and
\begin{equation}\label{eq:T-ialm-strcvx-const-eps}
T_K\le \left\lceil 2G_\vareps\left(K\sqrt{\frac{L_*}{\mu}}+\sqrt{\frac{H}{\mu}}\frac{\sqrt{C_\beta(\sigma-1)}}{\sqrt{\vareps}(\sqrt{\sigma}-1)}\right)+K\right\rceil, \text{ if }\mu>0.
\end{equation}
where 
$$G_\vareps = \log\frac{C_\beta D^2}{\vareps C_\vareps}+\log\left(L_*+\mu+\frac{H\big(C_\beta(\sigma-1)+\beta_g\vareps\big)}{\sigma\vareps}\right).$$
\end{theorem}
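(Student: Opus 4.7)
My plan is to mirror the structure of the proof of Theorem \ref{thm:it-comp-const} but replace the constant-penalty sum $\sum_k \sqrt{\beta_k}$ estimate with one that exploits the geometric growth $\beta_k=\beta_g\sigma^k$. The theorem has two pieces to verify: (i) that $\bar{\vx}^K$ satisfies the same primal-feasibility error bounds \eqref{eq:error-barx-K} as in the constant-penalty case, and (ii) an upper bound on the total number $T_K$ of inner gradient evaluations in each of the convex and strongly convex regimes.

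For part (i), I would observe that the key quantity controlling the right-hand sides of \eqref{eq:ialm-rate-obj}--\eqref{eq:ialm-rate-res} is $\sum_{t=0}^{K-1}\rho_t$ together with $\sum_{k=0}^{K-1}\rho_k\vareps_k$. Under Setting \ref{set-ialm} and using \eqref{eq:def-beta-g} we get the telescoping identity $\sum_{k=0}^{K-1}\beta_k=\beta_g\,\frac{\sigma^K-1}{\sigma-1}=\frac{C_\beta}{\vareps}$, which is exactly the same as in Setting \ref{set-penalty}. Since $\vareps_k$ is constant and equal to $\frac{\vareps}{2}\frac{C_\vareps}{C_\beta}$, it follows that $\sum_{k=0}^{K-1}\rho_k\vareps_k=\frac{C_\vareps}{2}$, so \eqref{eq:req-eps} holds and a direct substitution into Theorem \ref{thm:ialm-rate} yields \eqref{eq:error-barx-K}. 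In particular Lemma \ref{lem:bd-z-Lz} applies and gives the uniform estimate $L(\vz^k)\le L_*+\beta_k H$ for all $0\le k\le K$.

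For part (ii), I would plug this Lipschitz bound together with $\dist(\vx^k,\cX_k^*)\le D$ and $\vareps_k=\tfrac{\vareps C_\vareps}{2C_\beta}$ into the bound on $t_k$ from Lemma \ref{lem:sub-iter}. Using $\sqrt{a+b}\le\sqrt{a}+\sqrt{b}$, the convex case gives $t_k\le 2D\sqrt{C_\beta/C_\vareps}\,\bigl(\sqrt{L_*/\vareps}+\sqrt{\beta_k H/\vareps}\bigr)+1$, so summing over $k$ reduces the estimate to controlling $\sum_{k=0}^{K-1}\sqrt{\beta_k}$. The crucial computation is the geometric-series bound
\[
\sum_{k=0}^{K-1}\sqrt{\beta_k}=\sqrt{\beta_g}\,\frac{\sigma^{K/2}-1}{\sqrt{\sigma}-1}\le\frac{\sqrt{\beta_g(\sigma^K-1)}}{\sqrt{\sigma}-1}=\frac{1}{\sqrt{\sigma}-1}\sqrt{\frac{C_\beta(\sigma-1)}{\vareps}},
\]
where the inequality uses $(\sigma^{K/2}-1)^2\le \sigma^K-1$. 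Substituting this into the summed bound produces \eqref{eq:T-ialm-cvx-const-eps}. For the strongly convex case I would apply \eqref{eq:bd-denom} exactly as in the proof of Theorem \ref{thm:it-comp-const} to get $t_k\le 2\sqrt{L(\vz^k)/\mu}\,\log\bigl((L(\vz^k)+\mu)[\dist(\vx^k,\cX_k^*)]^2/(2\vareps_k)\bigr)+1$, then factor the logarithm out by majorizing its argument at $k=K-1$, noting $\beta_{K-1}=\beta_g\sigma^{K-1}=(\beta_g+C_\beta(\sigma-1)/\vareps)/\sigma$, which gives precisely the $G_\vareps$ appearing in the statement. Summing $\sqrt{L(\vz^k)/\mu}$ with the same geometric-sum trick yields \eqref{eq:T-ialm-strcvx-const-eps}.

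The main obstacle is not any single hard estimate but the careful algebraic bookkeeping linking $\beta_g$, $\sigma$, $K$ and $\vareps$ via \eqref{eq:def-beta-g}: the geometric sum $\sum_{k=0}^{K-1}\sqrt{\beta_k}$ must be turned into an expression independent of $K$, and the uniform upper bound on the logarithmic factor in the strongly convex case must collapse to a single closed-form $G_\vareps$. Both rely on the identities $\beta_g(\sigma^K-1)=C_\beta(\sigma-1)/\vareps$ and $\sigma^{K/2}-1\le\sqrt{\sigma^K-1}$. Everything else is a direct adaptation of the arguments already used for Theorem \ref{thm:it-comp-const}.
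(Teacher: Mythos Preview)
Your proposal is correct and follows essentially the same approach as the paper: verify \eqref{eq:error-barx-K} via $\sum_k\rho_k=C_\beta/\vareps$ and $\sum_k\rho_k\vareps_k=C_\vareps/2$, then invoke Lemma~\ref{lem:bd-z-Lz} for $L(\vz^k)\le L_*+\beta_kH$, plug into Lemma~\ref{lem:sub-iter} with $\dist(\vx^k,\cX_k^*)\le D$, and control $\sum_{k=0}^{K-1}\sqrt{\beta_k}$ by the geometric-series identity combined with $\sigma^{K/2}-1\le\sqrt{\sigma^K-1}=\sqrt{C_\beta(\sigma-1)/(\beta_g\vareps)}$. The strongly convex case likewise matches: majorize the logarithm by its value at $\beta_{K-1}=\big(C_\beta(\sigma-1)+\beta_g\vareps\big)/(\sigma\vareps)$ and reuse the same bound on $\sum_k\sqrt{\beta_k}$.
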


\begin{proof}
When $\mu=0$, we have from the first inequality in \eqref{eq:tk-cvx} that the total number of gradient evaluations satisfies
\begin{equation}
T_K\le \sum_{k=0}^{K-1}\frac{\dist(\vx^k,\cX_k^*)\sqrt{2(L_*+\beta_k H)}}{\sqrt{\vareps_k}}+K.\label{eq:total-grad-cvx-adp2}
\end{equation}
Plugging into \eqref{eq:total-grad-cvx-adp2} the $\vareps_k$ given in \eqref{eq:const-eps} and noting $\dist(\vx^k ,\cX_k^*)\le D$ yields
\begin{equation}\label{eq:TK-geo-pen-const-eps}
T_K\le 2D\sqrt{\frac{C_\beta}{C_\vareps}}\sum_{k=0}^{K-1} \frac{\sqrt{L_*+\beta_k H}}{\sqrt{\vareps}}+K.
\end{equation}
Note that
$\sum_{k=0}^{K-1}\sqrt{\beta_k}=\sqrt{\beta_g}\frac{\sigma^{\frac{K}{2}}-1}{\sqrt{\sigma}-1}.$
From \eqref{eq:def-beta-g}, it holds
\begin{equation}\label{eq:sigmaK}
\sigma^K=\frac{C_\beta(\sigma-1)}{\beta_g\vareps}+1,
\end{equation}
and thus
$\sigma^{\frac{K}{2}}-1\le \sqrt{\frac{C_\beta(\sigma-1)}{\beta_g\vareps}}.$
Therefore, 
\begin{equation}\label{eq:bd-sum-beta-sqrt}
\sum_{k=0}^{K-1}\sqrt{\beta_k}\le \frac{\sqrt{C_\beta (\sigma-1)}}{\sqrt{\vareps}(\sqrt{\sigma}-1)},
\end{equation}
and using $\sqrt{L_*+\beta_k H}\le \sqrt{L_*}+\sqrt{\beta_k H}$, we have
\begin{equation}\label{eq:bd-Lstar-betaH}
\sum_{k=0}^{K-1}\sqrt{L_*+\beta_k H}\le \sum_{k=0}^{K-1}\left(\sqrt{L_*}+\sqrt{\beta_k H}\right)\le K\sqrt{L_*}+\frac{\sqrt{C_\beta H(\sigma-1)}}{\sqrt{\vareps}(\sqrt{\sigma}-1)},
\end{equation}
which together with \eqref{eq:TK-geo-pen-const-eps} gives \eqref{eq:T-ialm-cvx-const-eps}.

For the strongly convex case, we use \eqref{eq:bd-denom} and the second inequality of \eqref{eq:tk-cvx} to have
\begin{equation}
T_K\le 2\sum_{k=0}^{K-1}\sqrt{\frac{L_*+\beta_k H}{\mu}}\log\left(\frac{L_*+\beta_k H+\mu}{2\vareps_k}[\dist(\vx^k,\cX_k^*)]^2\right) + K.\label{eq:total-grad-scvx-adp2}
\end{equation}
Since $\dist(\vx^k,\cX_k^*)\le D$ and $\vareps_k$'s are set to those in \eqref{eq:const-eps}, the above inequality indicates
\begin{equation}\label{eq:TK-geo-pen-const-eps-scvx}T_K\le 2\sum_{k=0}^{K-1}\sqrt{\frac{L_*+\beta_k H}{\mu}}\log\frac{C_\beta D^2(L_*+\beta_k H+\mu)}{\vareps C_\vareps} + K.
\end{equation}
For $0\le k<K$,
\begin{equation}\label{eq:est-beta-K-1}
\beta_k\le \beta_{K-1}=\beta_g\sigma^{K-1}=\frac{\beta_g}{\sigma}\sigma^K\overset{\eqref{eq:sigmaK}}=\frac{\beta_g}{\sigma}\left(\frac{C_\beta(\sigma-1)}{\beta_g\vareps}+1\right)=\frac{C_\beta(\sigma-1)+\beta_g\vareps}{\sigma\vareps}.
\end{equation}
Plugging into \eqref{eq:TK-geo-pen-const-eps-scvx} the second inequality in \eqref{eq:bd-Lstar-betaH} and the above bound on $\beta_k$, we have \eqref{eq:T-ialm-strcvx-const-eps} and thus complete the proof. 
\end{proof}

\begin{remark}
Comparing the iteration complexity results in Theorems \ref{thm:it-comp-const} and \ref{thm:iter-comp-geo-const-eps}, we see that if $K=1$, the number $T_K$ in either case of $\mu=0$ or $\mu>0$ is the same for both penalty parameter settings as $\sigma\to\infty$. That is because when $K=1$, iALM with either of the two settings reduces to the penalty method. If $K>1$, the number $T_K$ for the setting of geometrically increasing penalty can be smaller than that for the constant parameter setting as $\sigma$ is big; see numerical results in section \ref{sec:numerical}.
\end{remark}

\begin{theorem}[Iteration complexity with geometrically increasing penalty and adaptive error]\label{thm:iter-comp-geo}
For any given $\vareps>0$, let $K$ be a positive integer number and $C_\beta, C_\vareps$ two positive real numbers. Set $\{\beta_k\}$ and $\{\rho_k\}$ according to Setting \ref{set-ialm}.  Assume $\mu\le \frac{L_0}{4}$. If $\mu=0$, set $\{\vareps_k\}$ as in \eqref{eq:best-eps-cvx-app}, and if $\mu>0$, set $\{\vareps_k\}$ as in \eqref{eq:best-eps-scvx-app}. Let $\bar{\vx}^K$ be given in \eqref{eq:barx-K}. Then the inequalities in \eqref{eq:error-barx-K} hold, and Algorithm \ref{alg:ialm} can produce $\bar{\vx}^K$ by evaluating gradients of $g, f_i, \,i\in [m]$ in at most $T_K$ times, where  
\begin{equation}\label{eq:T-ialm-cvx}
T_K\le \left\lceil 2D\sqrt{\frac{C_\beta}{C_\vareps}}\left(\sqrt{\frac{L_*}{\vareps}}\frac{(\sigma-1)^{\frac{1}{2}}}{(\sigma^{\frac{1}{6}}-1)(\sigma^{\frac{2}{3}}-1)^{\frac{1}{2}}}+\frac{\sqrt{HC_\beta}(\sigma-1)}{\vareps(\sigma^{\frac{2}{3}}-1)^{\frac{3}{2}}}\right)+K\right\rceil, \text{ if }\mu=0,
\end{equation}
and
\begin{equation}\label{eq:T-ialm-strcvx}
T_K\le \left\lceil 2G_\vareps\left(K\sqrt{\frac{L_*}{\mu}}+\sqrt{\frac{H}{\mu}}\frac{\sqrt{C_\beta(\sigma-1)}}{\sqrt{\vareps}(\sqrt{\sigma}-1)}\right)+K\right\rceil, \text{ if }\mu>0.
\end{equation}
where
$$G_\vareps=\log\frac{C_\beta D^2}{\vareps C_\vareps}+\log\left(L_*+\mu+\frac{H\big(C_\beta(\sigma-1)+\beta_g\vareps\big)}{\sigma\vareps}\right)+\log\frac{\sqrt{(\sigma-1)^2+\beta_g\vareps(\sigma-1)/C_\beta}}{\sigma-\sqrt{\sigma}}.$$

\end{theorem}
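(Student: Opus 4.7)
The plan is to parallel the proof of Theorem \ref{thm:iter-comp-geo-const-eps} but replace the constant choice \eqref{eq:const-eps} by the adaptive choices \eqref{eq:best-eps-cvx-app} or \eqref{eq:best-eps-scvx-app} in the two respective cases. First I would verify that the condition \eqref{eq:req-eps} of Lemma \ref{lem:bd-z-Lz} holds with equality for both adaptive choices: since $\rho_k=\beta_k$, a direct computation gives
$$\sum_{k=0}^{K-1}\rho_k\vareps_k=\frac{C_\vareps}{2}\cdot\frac{\sum_k\beta_k^{2/3}}{\sum_t\beta_t^{2/3}}=\frac{C_\vareps}{2}\quad\text{and}\quad\sum_{k=0}^{K-1}\rho_k\vareps_k=\frac{C_\vareps}{2}\cdot\frac{\sum_k\sqrt{\beta_k}}{\sum_t\sqrt{\beta_t}}=\frac{C_\vareps}{2},$$
respectively. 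Plugging this into Theorem \ref{thm:ialm-rate}, together with $\sum_t\rho_t=C_\beta/\vareps$, yields \eqref{eq:error-barx-K} exactly as in Theorem \ref{thm:it-comp-const}. Lemma \ref{lem:bd-z-Lz} then gives the key bound $L(\vz^k)\le L_*+\beta_k H$, which in turn justifies $\sqrt{L(\vz^k)}\le\sqrt{L_*}+\sqrt{\beta_k H}$ that I will use to separate the two contributions.

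For the convex case I would start from the first branch of \eqref{eq:tk-cvx} together with $\dist(\vx^k,\cX_k^*)\le D$ to get
$$T_K\le K+\sum_{k=0}^{K-1}\frac{D\sqrt{2(L_*+\beta_k H)}}{\sqrt{\vareps_k}}\le K+D\sqrt{\frac{4\sum_t\beta_t^{2/3}}{C_\vareps}}\sum_{k=0}^{K-1}\big(\sqrt{L_*}\,\beta_k^{1/6}+\sqrt{H}\,\beta_k^{2/3}\big),$$
after substituting \eqref{eq:best-eps-cvx-app} inside the square root. The remaining work is purely algebraic: evaluate the geometric sums $\sum_k\beta_k^{p}=\beta_g^{p}(\sigma^{pK}-1)/(\sigma^{p}-1)$ for $p\in\{1/6,2/3\}$, and use the subadditivity $\sigma^{pK}-1\le(\sigma^K-1)^{p}$ (valid for $p\in(0,1]$) together with the identity $\beta_g(\sigma^K-1)=C_\beta(\sigma-1)/\vareps$ coming from \eqref{eq:def-beta-g}. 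This converts every $\beta_g^{p}(\sigma^{pK}-1)$ into $\big(C_\beta(\sigma-1)/\vareps\big)^{p}$, and after collecting exponents the $L_*$-term produces the factor $\sqrt{L_*/\vareps}\,(\sigma-1)^{1/2}/[(\sigma^{1/6}-1)(\sigma^{2/3}-1)^{1/2}]$ and the $H$-term produces the factor $\sqrt{HC_\beta}(\sigma-1)/[\vareps(\sigma^{2/3}-1)^{3/2}]$; combined they reproduce \eqref{eq:T-ialm-cvx}.

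For the strongly convex case the starting point is the second branch of \eqref{eq:tk-cvx} combined with the bound $1/\log\frac{1}{1-\sqrt{\mu/L(\vz^k)}}\le 2\sqrt{L(\vz^k)/\mu}$ derived in \eqref{eq:bd-denom}, giving
$$T_K\le K+2\sum_{k=0}^{K-1}\sqrt{\frac{L_*+\beta_k H}{\mu}}\log\!\left(\frac{D^2(L_*+\beta_k H+\mu)\sqrt{\beta_k}\sum_t\sqrt{\beta_t}}{C_\vareps}\right),$$
where I have substituted $1/(2\vareps_k)=\sqrt{\beta_k}\sum_t\sqrt{\beta_t}/C_\vareps$ from \eqref{eq:best-eps-scvx-app}. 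To factor the logarithm out of the sum I would bound $\beta_k\le\beta_{K-1}$, use the bound \eqref{eq:est-beta-K-1} on $\beta_{K-1}$, and use \eqref{eq:bd-sum-beta-sqrt} on $\sum_t\sqrt{\beta_t}$; the three pieces $\log(C_\beta D^2/(\vareps C_\vareps))$, $\log(L_*+\mu+H(C_\beta(\sigma-1)+\beta_g\vareps)/(\sigma\vareps))$, and the last $\log$ term involving $\sqrt{(\sigma-1)^2+\beta_g\vareps(\sigma-1)/C_\beta}/(\sigma-\sqrt{\sigma})$ then add up to precisely $G_\vareps$. Finally $\sqrt{L_*+\beta_k H}\le\sqrt{L_*}+\sqrt{\beta_k H}$ and \eqref{eq:bd-sum-beta-sqrt} convert $2G_\vareps\sum_k\sqrt{(L_*+\beta_k H)/\mu}$ into the bound \eqref{eq:T-ialm-strcvx}.

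The main obstacle I anticipate is purely bookkeeping: keeping track of the fractional exponents $1/6, 1/3, 2/3$ in the convex case and applying the subadditive inequality $(a+b)^p\le a^p+b^p$ at the right place so that the resulting denominator matches $(\sigma^{1/6}-1)(\sigma^{2/3}-1)^{1/2}$ rather than a weaker factor. Once the identity $\beta_g(\sigma^K-1)=C_\beta(\sigma-1)/\vareps$ is used to eliminate $\beta_g$ and $K$ from the dominant geometric sums, everything reduces to elementary algebra; no new structural argument beyond those of Theorem \ref{thm:iter-comp-geo-const-eps}, Lemma \ref{lem:bd-z-Lz}, and Lemma \ref{lem:sub-iter} should be required.
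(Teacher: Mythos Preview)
Your proposal is correct and follows essentially the same route as the paper's proof: verify \eqref{eq:req-eps} for the adaptive choices, invoke Theorem \ref{thm:ialm-rate} for \eqref{eq:error-barx-K}, use Lemma \ref{lem:bd-z-Lz} to bound $L(\vz^k)$, substitute the adaptive $\vareps_k$ into the two branches of \eqref{eq:tk-cvx}, split $\sqrt{L_*+\beta_k H}$, and reduce everything to geometric sums controlled via the subadditivity $(a+1)^p\le a^p+1$ together with the identity $\beta_g(\sigma^K-1)=C_\beta(\sigma-1)/\vareps$. The paper performs exactly these steps (cf.\ \eqref{eq:bd-TK-ialm-cvx}--\eqref{eq:bd-sig-K4} for $\mu=0$ and \eqref{eq:total-grad-scvx-adp2-D} plus \eqref{eq:bd-sum-beta-sqrt}, \eqref{eq:bd-Lstar-betaH}, \eqref{eq:est-beta-K-1} for $\mu>0$); your explicit verification of \eqref{eq:req-eps} and the clean factoring of the three logarithmic pieces that sum to $G_\vareps$ are slightly more explicit than the paper's write-up, but there is no substantive difference in method.
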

\begin{proof}
For the case of $\mu=0$, we have \eqref{eq:total-grad-cvx-adp2}, plugging into which the $\vareps_k$ given in \eqref{eq:best-eps-cvx-app} yields
$$T_K\le \frac{2}{\sqrt{C_\vareps}}\sqrt{\sum_{t=0}^{K-1}\beta_t^{\frac{2}{3}}}\sum_{k=0}^{K-1}\dist(\vx^k,\cX_k^*)\beta_k^{\frac{1}{6}}(L_*+\beta_k H)^{\frac{1}{2}}+K.$$
Since $\dist(\vx^k ,\cX_k^*)\le D$, the above inequality implies
\begin{align}\label{eq:bd-TK-ialm-cvx}
T_K \le  \frac{2D}{\sqrt{C_\vareps}}\sqrt{\sum_{t=0}^{K-1}\beta_t^{\frac{2}{3}}}\sum_{k=0}^{K-1}\beta_k^{\frac{1}{6}}(L_*+\beta_k H)^{\frac{1}{2}}+K.
\end{align}
Note that
$$\sqrt{\sum_{t=0}^{K-1}\beta_t^{\frac{2}{3}}}=\sqrt{\sum_{t=0}^{K-1}\beta_g^{\frac{2}{3}}\sigma^{\frac{2t}{3}}}=\beta_g^{\frac{1}{3}}\sqrt{\frac{\sigma^{\frac{2K}{3}}-1}{\sigma^{\frac{2}{3}}-1}},$$
and
\begin{align*}\sum_{k=0}^{K-1}\beta_k^{\frac{1}{6}}(L_*+\beta_k H)^{\frac{1}{2}}\le \sum_{k=0}^{K-1}\beta_k^{\frac{1}{6}}\left(\sqrt{L_*}+\sqrt{\beta_k H}\right)=&~\sqrt{L_*}\sum_{k=0}^{K-1}\beta_k^{\frac{1}{6}}+\sqrt{H}\sum_{k=0}^{K-1}\beta_k^{\frac{2}{3}}\\
=&~\sqrt{L_*}\beta_g^{\frac{1}{6}}\frac{\sigma^{\frac{K}{6}}-1}{\sigma^{\frac{1}{6}}-1}+\sqrt{H}\beta_g^{\frac{2}{3}}\frac{\sigma^{\frac{2K}{3}}-1}{\sigma^{\frac{2}{3}}-1}.
\end{align*}
Hence, it follows from \eqref{eq:bd-TK-ialm-cvx} that
\begin{equation}\label{eq:bd-sum-32}T_K\le \frac{2D}{\sqrt{C_\vareps}}\beta_g^{\frac{1}{3}}\sqrt{\frac{\sigma^{\frac{2K}{3}}-1}{\sigma^{\frac{2}{3}}-1}}\left(\sqrt{L_*}\beta_g^{\frac{1}{6}}\frac{\sigma^{\frac{K}{6}}-1}{\sigma^{\frac{1}{6}}-1}+\sqrt{H}\beta_g^{\frac{2}{3}}\frac{\sigma^{\frac{2K}{3}}-1}{\sigma^{\frac{2}{3}}-1}\right)+K.
\end{equation}
From \eqref{eq:sigmaK} and the fact $\sqrt{a+b}\le \sqrt{a}+\sqrt{b},\,\forall a,b\ge 0$, it follows that
\begin{equation}\label{eq:bd-sig-K4}
\sigma^{\frac{2K}{3}}-1\le \left(\frac{C_\beta(\sigma-1)}{\beta_g\vareps}\right)^{\frac{2}{3}},\quad \sigma^{\frac{K}{6}}-1\le \left(\frac{C_\beta(\sigma-1)}{\beta_g\vareps}\right)^{\frac{1}{6}}.
\end{equation} 
Therefore, plugging the two inequalities in \eqref{eq:bd-sig-K4} into \eqref{eq:bd-sum-32} yields \eqref{eq:T-ialm-cvx}.


For the case of $\mu>0$, we have \eqref{eq:total-grad-scvx-adp2}. Since $\dist(\vx^k,\cX_k^*)\le D$ and $\vareps_k$'s are set to those in \eqref{eq:best-eps-scvx-app}, the inequality in \eqref{eq:total-grad-scvx-adp2} indicates
\begin{align}\label{eq:total-grad-scvx-adp2-D}
T_K\le &~2\sum_{k=0}^{K-1}\sqrt{\frac{L_*+\beta_k H}{\mu}}\log\left(\sqrt{\beta_k}\sum_{t=0}^{K-1}\sqrt{\beta_t}\frac{L_*+\beta_k H+\mu}{C_\vareps}D^2\right) + K\cr
=&~2\sum_{k=0}^{K-1}\sqrt{\frac{L_*+\beta_k H}{\mu}}\left(\log\frac{\sqrt{\beta_k}D^2\sum_{t=0}^{K-1}\sqrt{\beta_t}}{C_\vareps}+\log\big(L_*+\beta_k H+\mu\big)\right)+K.
\end{align}
Therefore, plugging into \eqref{eq:total-grad-scvx-adp2-D} the inequality in \eqref{eq:bd-sum-beta-sqrt}, the upper bounds of $\sum_{k=0}^{K-1}\sqrt{L_*+\beta_k H}$ and $\beta_k$ in \eqref{eq:bd-Lstar-betaH} and \eqref{eq:est-beta-K-1} respectively, we obtain \eqref{eq:T-ialm-strcvx} and complete the proof.  
\end{proof}

\begin{remark}
Let us compare the iteration complexity results in Theorems \ref{thm:iter-comp-geo-const-eps} and \ref{thm:iter-comp-geo}. We see that for the case of $\mu=0$, as $K>1$ and $\sigma$ is big, if $\sqrt{\frac{L_*}{\vareps}}$ dominates $\frac{\sqrt{H C_\beta}}{\vareps}$, the iteration complexity result in Theorem \ref{thm:iter-comp-geo} is better than that in Theorem \ref{thm:iter-comp-geo-const-eps} (see the numerical results in Table \ref{table:cvx-big-Lip}), and if $\frac{\sqrt{H C_\beta}}{\vareps}$ dominates $\sqrt{\frac{L_*}{\vareps}}$, the two results are similar. For the case of $\mu>0$, as $K>1$, the iteration complexity result in Theorem \ref{thm:iter-comp-geo-const-eps} is better than that in Theorem \ref{thm:iter-comp-geo}.
\end{remark}

\section{Nonergodic convergence rate and iteration complexity}\label{sec:nonergo-rate}
In this section, we show a nonergodic convergence rate result of Algorithm \ref{alg:ialm}, by employing the relation between iALM and the inexact proximal point algorithm (iPPA). 
Throughout this section, we assume there is no affine equality constraint in \eqref{eq:ccp}, i.e., we consider the problem
\begin{equation}\label{eq:ineq-ccp}
\Min_{\vx\in \cX} f_0(\vx), \st f_i(\vx)\le 0, \, \forall i\in [m],
\end{equation}
where $f_i, i=0,1,\ldots,m,$ satisfy the assumptions through \eqref{eq:f0}--\eqref{eq:lip-f}.
We do not include affine equality constraints for the purpose of directly applying existing results in \cite{rockafellar1973dual, rockafellar1976augmented}. Although results similar to those in \cite{rockafellar1973dual, rockafellar1976augmented} can possibly be shown for the equality and inequality constrained problem \eqref{eq:ccp}, we do not extend our discussion but instead formulate any affine equality constraint $\va^\top\vx=b$ by two affine inequality constraints $\va^\top\vx-b\le0$ and $-\va^\top\vx+b\le0$ if there is any.

\subsection{Relation between iALM and iPPA}\label{sec:rel-ialm-ippa}
Let $\cL_0(\vx,\vz)$ be the Lagrangian function of \eqref{eq:ineq-ccp}, namely,
$$\cL_0(\vx,\vz)=f_0(\vx) + \sum_{i=1}^mz_i f_i(\vx),$$
and let $\cL_\beta(\vx,\vz)$ be the augmented Lagrangian function of \eqref{eq:ineq-ccp}, defined in the same way as that in \eqref{eq:aug-fun}.
In addition, let $d_0(\vz)$ be the Lagrangian dual function, defined as
$$d_0(\vz)=\left\{\begin{array}{ll}\min_{\vx\in\cX} \cL_0(\vx,\vz), & \text{ if }\vz\ge \vzero,\\[0.1cm]
-\infty, & \text{ otherwise},
\end{array}
\right.$$
and let $d_\beta(\vz)\triangleq \min_{\vx\in\cX} \cL_\beta(\vx,\vz)$ be the augmented Lagrangian dual function. 

Applying Algorithm \ref{alg:ialm} with $\rho_k=\beta_k$ to \eqref{eq:ineq-ccp}, we have iterates $\{(\vx^k,\vz^k)\}$ that satisfy:
\begin{subequations}\label{eq:ineq-ialm}
\begin{align}
&\cL_{\beta_k}(\vx^{k+1},\vz^k) \le d_\beta(\vz^k) + \vareps_k,\label{eq:ineq-ialm-x}\\[0.1cm]
&\vz^{k+1}=\vz^k+\beta_k\nabla_\vz\cL_{\beta_k}(\vx^{k+1},\vz^k).\label{eq:ineq-ialm-z}
\end{align}
\end{subequations}
The iPPA applied to the Lagrangian dual problem $\max_\vz d_0(\vz)$ iteratively performs the updates:
\begin{equation}\label{eq:ippa-d0}
\vz^{k+1}\approx \cM_{\beta_k}(\vz^k),
\end{equation}
where the operator $\cM_{\beta}$ is the proximal mapping of $-\beta d_0$, defined as
$$\cM_{\beta}(\vz)=\argmax_\vu d_0(\vu)-\frac{1}{2\beta}\|\vu-\vz\|^2.$$
In \eqref{eq:ippa-d0}, the approximation could be measured by the objective error as in \eqref{eq:ineq-ialm-x} or by the gradient norm at the returned point $\vz^{k+1}$; see \cite{guler1992new} for example.

It was noted in \cite{rockafellar1973dual} that
\begin{equation}\label{eq:rel-alm-ppa}
d_\beta(\vz) = \max_{\vu} d_0(\vu)-\frac{1}{2\beta}\|\vu-\vz\|^2,
\end{equation} 
and in addition, if $\hat{\vx}\in\cX$ satisfies $\cL_\beta(\hat{\vx}, \vz) \le d_\beta(\vz)+\vareps$, then
$\|\vz + \beta \nabla_\vz \cL_\beta(\hat{\vx}, \vz) - \cM(\vz)\|\le \sqrt{\frac{\beta\vareps}{2}}.$ 
Therefore, iALM with updates in \eqref{eq:ineq-ialm} reduces to iPPA in \eqref{eq:ippa-d0} with approximation error 
\begin{equation}\label{eq:diff-zk-true-z}
\|\vz^{k+1}-\cM_{\beta_k}(\vz^k)\|\le \sqrt{\frac{\beta_k\vareps_k}{2}}.
\end{equation}

\subsection{Nonergodic convergence rate of iALM}
For iALM with updates in \eqref{eq:ineq-ialm} on solving \eqref{eq:ineq-ccp}, \cite[Theorem 4]{rockafellar1976augmented} establishes the following bounds on the objective error and feasibility violation:
\begin{subequations}\label{eq:bd-rockafellar}
\begin{align}
&f_0(\vx^{k+1})-f_0(\vx^*)\le \vareps_k+\frac{\|\vz^k\|^2-\|\vz^{k+1}\|^2}{2\beta_k},\label{eq:bd-rockafellar-obj}\\[0.1cm]
&f_i(\vx^{k+1})\le \frac{|z_i^k-z_i^{k+1}|}{\beta_k},\, \forall \, i\in[m].\label{eq:bd-rockafellar-res}
\end{align}
\end{subequations}
If in \eqref{eq:ineq-ialm-x}, $\vareps_k=0,\,\forall k$, \cite[Theorem 2.2]{guler1991convergence} shows that
\begin{equation}\label{eq:rate-ppa}
\frac{\|\vz^k-\vz^{k+1}\|}{\beta_k}\le \frac{\|\vz^0-\vz^*\|}{\sum_{t=0}^k\beta_t}.
\end{equation}
Therefore, combining the results in \eqref{eq:bd-rockafellar} with $\vareps_k=0,\,\forall k$ and \eqref{eq:rate-ppa}, and also noting the boundedness of $\vz^k$ from \eqref{eq:bd-zk}, one can easily obtain a nonergodic convergence rate result of exact ALM on solving \eqref{eq:ineq-ccp}. However, if $\vareps_k>0$, we do not notice any existing result on estimating $\frac{\|\vz^k-\vz^{k+1}\|}{\beta_k}$. In the following, we establish a bound on this quantity and thus show a nonergodic convergence rate result of iALM. 

\begin{lemma}\label{lem:bd-diff-zk-zk+1}
Given a positive integer $K$ and a nonnegative number $C_\vareps$, choose positive sequences $\{\beta_k\}$ and $\{\vareps_k\}$ such that $\sum_{k=0}^{K-1}\beta_k\vareps_k\le \frac{C_\vareps}{2}$. Let $\{(\vx^k,\vz^k)\}_{k=0}^{K}$ be the sequence generated from the updates in \eqref{eq:ineq-ialm} with $\vz^0=\vzero$ on solving \eqref{eq:ineq-ccp}. Then
\begin{equation}\label{bd-diff-zk-zk+1}
\|\vz^k-\vz^{k+1}\|\le 5\|\vz^*\|+\frac{7}{2}\sqrt{C_\vareps},
\end{equation}
where we have assumed that \eqref{eq:ineq-ccp} has a primal-dual solution $(\vx^*,\vz^*)$. 
\end{lemma}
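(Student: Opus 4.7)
The plan is to exploit the equivalence between iALM and inexact PPA on the Lagrangian dual established in Subsection \ref{sec:rel-ialm-ippa}, combined with the firm nonexpansiveness of the proximal mapping $\cM_\beta$ and an a priori boundedness estimate on $\|\vz^k-\vz^*\|$ that mirrors the derivation of \eqref{eq:bd-zk0}.

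First I would apply the triangle inequality
\[
\|\vz^k-\vz^{k+1}\| \le \|\vz^k-\cM_{\beta_k}(\vz^k)\| + \|\cM_{\beta_k}(\vz^k)-\vz^{k+1}\|,
\]
and control the second term directly by \eqref{eq:diff-zk-true-z}, namely $\|\cM_{\beta_k}(\vz^k)-\vz^{k+1}\|\le\sqrt{\beta_k\vareps_k/2}\le \tfrac{1}{2}\sqrt{C_\vareps}$, where the last step uses $\beta_k\vareps_k\le \sum_t\beta_t\vareps_t\le C_\vareps/2$.

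For the first term, I would use that any dual optimal $\vz^*$ is a fixed point of $\cM_{\beta_k}$ (since $\vz^*\in\argmax_\vu d_0(\vu)$ by \eqref{eq:rel-alm-ppa}), together with the firm nonexpansiveness of the proximal mapping, to obtain $\|\cM_{\beta_k}(\vz^k)-\vz^*\|\le\|\vz^k-\vz^*\|$. Another triangle inequality then gives
\[
\|\vz^k-\cM_{\beta_k}(\vz^k)\|\le \|\vz^k-\vz^*\|+\|\vz^*-\cM_{\beta_k}(\vz^k)\|\le 2\|\vz^k-\vz^*\|.
\]

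The final piece is an a priori bound on $\|\vz^k-\vz^*\|$. I would mimic the derivation leading to \eqref{eq:bd-zk0}: plug $(\vx,\vz)=(\vx^*,\vz^*)$ into the one-iteration inequality \eqref{eq:ialm-ineq2-1} (specialized to the inequality-only problem, so no $\vy$-term), invoke \eqref{eq:opt} to drop the nonnegative KKT term, and telescope the resulting descent-style inequality from $0$ to $k-1$ to obtain
\[
\tfrac{1}{2}\|\vz^k-\vz^*\|^2 \le \tfrac{1}{2}\|\vz^0-\vz^*\|^2 + \sum_{t=0}^{k-1}\beta_t\vareps_t \le \tfrac{1}{2}\|\vz^*\|^2 + \tfrac{C_\vareps}{2},
\]
using $\vz^0=\vzero$ and the hypothesis on $\sum\beta_t\vareps_t$. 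Taking square roots and using $\sqrt{a+b}\le\sqrt{a}+\sqrt{b}$ yields $\|\vz^k-\vz^*\|\le \|\vz^*\|+\sqrt{C_\vareps}$. Assembling everything gives $\|\vz^k-\vz^{k+1}\|\le 2\|\vz^*\|+2\sqrt{C_\vareps}+\tfrac{1}{2}\sqrt{C_\vareps}$, which is comfortably inside the claimed bound $5\|\vz^*\|+\tfrac{7}{2}\sqrt{C_\vareps}$.

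The main obstacle is the telescoping step, because the one-iteration inequality in \eqref{eq:ialm-ineq2-1} was formulated for the problem with equality constraints; I will have to check that the argument carries over verbatim to \eqref{eq:ineq-ccp} (with $\rho_k=\beta_k$), which amounts to noting that the $\vy$-block simply disappears and that the nonnegativity of the dropped KKT expression still follows from \eqref{eq:opt}. The rest is a triangle-inequality bookkeeping exercise.
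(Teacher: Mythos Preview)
Your proposal is correct, and in fact it is cleaner than the paper's argument and yields the sharper constant $2\|\vz^*\|+\tfrac{5}{2}\sqrt{C_\vareps}$.

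The paper takes a different route to bound $\|\vz^k-\cM_{\beta_k}(\vz^k)\|$. Rather than invoking nonexpansiveness of the proximal map and the fixed-point property of $\vz^*$, it uses the Moreau-type identity $\tfrac{1}{2\beta_k}\|\cM_{\beta_k}(\vz^k)-\vz^k\|^2=d_0(\cM_{\beta_k}(\vz^k))-d_{\beta_k}(\vz^k)$, then bounds the right side via weak duality ($d_0\le f_0(\vx^*)$), the lower bound $d_{\beta_k}(\vz^k)\ge f_0(\vx^{k+1})-\tfrac{\|\vz^k\|^2}{2\beta_k}-\vareps_k$ coming from $\psi_\beta(u,v)\ge -v^2/(2\beta)$, and Rockafellar's feasibility estimate \eqref{eq:bd-rockafellar-res} to control $f_0(\vx^*)-f_0(\vx^{k+1})$. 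This produces an \emph{implicit} inequality in $\|\vz^k-\vz^{k+1}\|$ that is then resolved with Young's inequality and the same a priori bound $\|\vz^k\|\le 2\|\vz^*\|+\sqrt{C_\vareps}$ you use. Your approach bypasses all of this machinery by going straight through nonexpansiveness, which is why your constant is smaller; the paper's route, on the other hand, stays closer to primal quantities and the Rockafellar bounds already quoted in the section, which may have been the author's motivation. Your worry about porting \eqref{eq:ialm-ineq2-1} to the inequality-only setting is harmless: the paper itself does exactly this in \eqref{eq:bd-zk-ineq-case}.
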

\begin{proof}
Let $\tilde{\vz}^{k+1}=\cM_{\beta_k}(\vz^k)$. Then from \eqref{eq:rel-alm-ppa}, it follows that
\begin{equation}\label{eq:diff-zk-ztilde}
\frac{1}{2\beta_k}\|\tilde{\vz}^{k+1}-\vz^k\|^2=d_0(\tilde{\vz}^{k+1})-d_{\beta_k}(\vz^k).
\end{equation}
By the weak duality, it holds $d_0(\tilde{\vz}^{k+1})\le f_0(\vx^*)$. From \eqref{eq:ineq-ialm-x}, we have
\begin{equation}\label{eq:dcLk}
d_{\beta_k}(\vz^k)\ge \cL_{\beta_k}(\vx^{k+1},\vz^k)-\vareps_k.
\end{equation}
Recall the definition of $\psi_\beta(u,v)$ in \eqref{eq:def-psi} and note $\psi_\beta(u,v)\ge-\frac{v^2}{2\beta}$. Hence,
$$\cL_{\beta_k}(\vx^{k+1},\vz^k)=f_0(\vx^{k+1})+\sum_{i=1}^m\psi_{\beta_k}(f_i(\vx^{k+1},z_i^k)\ge f_0(\vx^{k+1})-\frac{\|\vz^k\|^2}{2\beta_k}.$$
Thus by \eqref{eq:dcLk}, it holds that
$$d_{\beta_k}(\vz^k)\ge f_0(\vx^{k+1})-\frac{\|\vz^k\|^2}{2\beta_k}-\vareps_k,$$
and
\begin{equation}\label{eq:d0-diff-dbeta}
d_0(\tilde{\vz}^{k+1})-d_{\beta_k}(\vz^k)\le f_0(\vx^*)-f_0(\vx^{k+1})+\frac{\|\vz^k\|^2}{2\beta_k}+\vareps_k.
\end{equation}

Since $(\vx^*,\vz^*)$ is a primal-dual solution of \eqref{eq:ineq-ccp}, similar to \eqref{eq:opt}, it holds that 
\begin{equation}\label{eq:opt-ineq}
f_0(\vx)-f_0(\vx^*)+\sum_{i=1}^m z_i^* f_i(\vx)\ge0,\,\forall \vx\in \cX.
\end{equation}
Hence,
$$f_0(\vx^*)-f_0(\vx^{k+1})\le \sum_{i=1}^m z_i^* f_i(\vx^{k+1})\overset{\eqref{eq:bd-rockafellar-res}}\le \frac{\|\vz^k-\vz^{k+1}\|}{\beta_k}\|\vz^*\|.$$
Therefore, from \eqref{eq:d0-diff-dbeta} and the above inequality, it follows that
$$d_0(\tilde{\vz}^{k+1})-d_{\beta_k}(\vz^k)\le \frac{\|\vz^k-\vz^{k+1}\|}{\beta_k}\|\vz^*\|+\frac{\|\vz^k\|^2}{2\beta_k}+\vareps_k,$$
and noting \eqref{eq:diff-zk-ztilde}, we have
\begin{align*}
\|\tilde{\vz}^{k+1}-\vz^k\|= &~ \sqrt{2\beta_k\big(d_0(\tilde{\vz}^{k+1})-d_{\beta_k}(\vz^k)\big)}\\
\le &~\sqrt{2\|\vz^k-\vz^{k+1}\|\cdot\|\vz^*\|+\|\vz^k\|^2+2\beta_k\vareps_k}.
\end{align*}
By the triangle inequality, we have from \eqref{eq:diff-zk-true-z} and the above inequality that
\begin{align}\label{eq:bd-diff-2zk}
\|\vz^k-\vz^{k+1}\|\le  &~\sqrt{2\|\vz^k-\vz^{k+1}\|\cdot\|\vz^*\|+\|\vz^k\|^2+2\beta_k\vareps_k}+\sqrt{\frac{\beta_k\vareps_k}{2}}\cr
\le & \sqrt{2(\|\vz^k\|+\|\vz^{k+1}\|)\|\vz^*\|} + \|\vz^k\|+\frac{3\sqrt{2}}{2}\sqrt{\beta_k\vareps_k}\cr
\le & \frac{\|\vz^k\|+\|\vz^{k+1}\|}{2}+\|\vz^*\|+\|\vz^k\|+\frac{3}{2}\sqrt{C_\vareps},
\end{align}
where the last inequality uses the Young's inequality and the fact $\beta_k\vareps_k\le \frac{C_\vareps}{2}$.
Through the same arguments as those in Lemma \ref{lem:bd-z-Lz}, one can show
\begin{equation}\label{eq:bd-zk-ineq-case}
\|\vz^k\|\le 2\|\vz^*\|+\sqrt{C_\vareps},\,\,\forall \, 0\le k\le K,
\end{equation}
i.e., the inequality in \eqref{eq:bd-zk} with $\vy^*=\vzero$.
Plugging the above bound on $\|\vz^k\|$ into \eqref{eq:bd-diff-2zk} gives the desired result in \eqref{bd-diff-zk-zk+1}.
\end{proof}

Combining \eqref{eq:bd-rockafellar} and \eqref{bd-diff-zk-zk+1}, we are able to establish the nonergodic convergence rate result of iALM on solving \eqref{eq:ineq-ccp}.

\begin{theorem}[nonergodic convergence rate]\label{thm:nonerg-rate}
Under the same assumptions of Lemma \ref{lem:bd-diff-zk-zk+1}, it holds that for any $0\le k < K$,
\begin{subequations}\label{eq:nonerg-rate}
\begin{align}
&\big|f_0(\vx^{k+1})-f_0(\vx^*)\big|\le \vareps_k+\frac{2\|\vz^*\|+\sqrt{C_\vareps}}{\beta_k}\left(5\|\vz^*\|+\frac{7}{2}\sqrt{C_\vareps}\right),\label{eq:nonerg-rate-obj}\\[0.1cm]
&\big\|[\vf(\vx^{k+1})]_+\big\|\le \frac{1}{\beta_k}\left(5\|\vz^*\|+\frac{7}{2}\sqrt{C_\vareps}\right).\label{eq:nonerg-rate-res}
\end{align}
\end{subequations}

\end{theorem}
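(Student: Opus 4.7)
The plan is to derive both estimates in \eqref{eq:nonerg-rate} directly from the one-step bounds \eqref{eq:bd-rockafellar} combined with the uniform dual bound \eqref{eq:bd-zk-ineq-case} and the consecutive-difference bound \eqref{bd-diff-zk-zk+1} established in Lemma \ref{lem:bd-diff-zk-zk+1}. The genuinely new analytical work — controlling $\|\vz^k-\vz^{k+1}\|$ uniformly in $k$ in the inexact setting, where the classical telescoping argument of \cite{guler1991convergence} fails — has already been completed in the preceding lemma, so the remainder is mostly bookkeeping of constants.

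I would treat the feasibility bound \eqref{eq:nonerg-rate-res} first. Since $[a]_+\le|a|$ for any real $a$, the componentwise inequality \eqref{eq:bd-rockafellar-res} yields $[f_i(\vx^{k+1})]_+\le |z_i^k-z_i^{k+1}|/\beta_k$ for each $i\in[m]$. Taking the Euclidean norm over $i$ and applying \eqref{bd-diff-zk-zk+1} gives $\|[\vf(\vx^{k+1})]_+\|\le \|\vz^k-\vz^{k+1}\|/\beta_k\le (5\|\vz^*\|+\tfrac{7}{2}\sqrt{C_\vareps})/\beta_k$, which is exactly \eqref{eq:nonerg-rate-res}.

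For the upper direction of \eqref{eq:nonerg-rate-obj}, I would start from \eqref{eq:bd-rockafellar-obj}, factor $\|\vz^k\|^2-\|\vz^{k+1}\|^2=(\|\vz^k\|+\|\vz^{k+1}\|)(\|\vz^k\|-\|\vz^{k+1}\|)$, and bound the second factor by $\|\vz^k-\vz^{k+1}\|$ via the reverse triangle inequality. The first factor is controlled by $2(2\|\vz^*\|+\sqrt{C_\vareps})$ through \eqref{eq:bd-zk-ineq-case}, and the second by $5\|\vz^*\|+\tfrac{7}{2}\sqrt{C_\vareps}$ through \eqref{bd-diff-zk-zk+1}; after dividing by $2\beta_k$ the product matches the stated constant $(2\|\vz^*\|+\sqrt{C_\vareps})(5\|\vz^*\|+\tfrac{7}{2}\sqrt{C_\vareps})/\beta_k$. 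For the lower direction I would use the saddle-point inequality \eqref{eq:opt-ineq} at $\vx=\vx^{k+1}$ to write $f_0(\vx^{k+1})-f_0(\vx^*)\ge -\sum_{i=1}^m z_i^* f_i(\vx^{k+1})$, then, since $z_i^*\ge 0$, replace $f_i(\vx^{k+1})$ by $[f_i(\vx^{k+1})]_+$ (which only strengthens the lower bound), and apply Cauchy--Schwarz to obtain $f_0(\vx^{k+1})-f_0(\vx^*)\ge -\|\vz^*\|\cdot\|[\vf(\vx^{k+1})]_+\|$. Inserting the feasibility bound \eqref{eq:nonerg-rate-res} produces $-(f_0(\vx^{k+1})-f_0(\vx^*))\le \|\vz^*\|(5\|\vz^*\|+\tfrac{7}{2}\sqrt{C_\vareps})/\beta_k$, which is dominated by the upper-side constant because $\|\vz^*\|\le 2\|\vz^*\|+\sqrt{C_\vareps}$. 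Combining the two directions gives \eqref{eq:nonerg-rate-obj}.

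There is no genuine obstacle beyond what has already been resolved in Lemma \ref{lem:bd-diff-zk-zk+1}; the only subtlety worth flagging is the need to pass from $f_i(\vx^{k+1})$ to $[f_i(\vx^{k+1})]_+$ when bounding $-\sum_{i=1}^m z_i^* f_i(\vx^{k+1})$ from below, so that the feasibility estimate just derived applies directly. Everything else is a short chain of elementary inequalities, and the constants $5$ and $\tfrac{7}{2}$ in \eqref{eq:nonerg-rate} simply propagate from \eqref{bd-diff-zk-zk+1}.
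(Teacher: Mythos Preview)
Your proposal is correct and follows essentially the same route as the paper: derive \eqref{eq:nonerg-rate-res} from \eqref{eq:bd-rockafellar-res} and \eqref{bd-diff-zk-zk+1}, get the upper half of \eqref{eq:nonerg-rate-obj} from \eqref{eq:bd-rockafellar-obj} by factoring $\|\vz^k\|^2-\|\vz^{k+1}\|^2$ and applying \eqref{eq:bd-zk-ineq-case} together with \eqref{bd-diff-zk-zk+1}, and get the lower half from \eqref{eq:opt-ineq} combined with the feasibility bound just established. One cosmetic remark: the step from $f_i(\vx^{k+1})\le|z_i^k-z_i^{k+1}|/\beta_k$ to $[f_i(\vx^{k+1})]_+\le|z_i^k-z_i^{k+1}|/\beta_k$ follows simply because the right-hand side is nonnegative, not from the inequality $[a]_+\le|a|$ you cite.
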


\begin{proof}
Directly from \eqref{eq:bd-rockafellar},  \eqref{bd-diff-zk-zk+1}, and \eqref{eq:bd-zk-ineq-case}, we obtain \eqref{eq:nonerg-rate-res} and
\begin{equation}\label{eq:obj-error-right}
f_0(\vx^{k+1})-f_0(\vx^*)\le \vareps_k+\frac{2\|\vz^*\|+\sqrt{C_\vareps}}{\beta_k}\left(5\|\vz^*\|+\frac{7}{2}\sqrt{C_\vareps}\right).
\end{equation}
Using \eqref{eq:opt-ineq}, we have from \eqref{eq:nonerg-rate-res} that
$$f_0(\vx^{k+1})-f_0(\vx^*)\ge -\frac{\|\vz^*\|}{\beta_k}\left(5\|\vz^*\|+\frac{7}{2}\sqrt{C_\vareps}\right),$$
which together with \eqref{eq:obj-error-right} gives \eqref{eq:nonerg-rate-obj}.
\end{proof}

\begin{remark}
From the results in \eqref{eq:nonerg-rate}, we see that to have $\{\vx^k\}$ to be a minimizing sequence of \eqref{eq:ineq-ccp}, we need $\beta_k\to\infty$ and $\vareps_k\to 0$ as $k\to \infty$. Hence, setting $\{\beta_k\}$ to a constant sequence will not be a valid option. 
\end{remark}

\subsection{Iteration complexity}
In this subsection, we set parameters according to Setting \ref{set-ialm}, and we estimate the iteration complexity of iALM on solving \eqref{eq:ineq-ccp} by applying Nesterov's optimal first-order method to \eqref{eq:ineq-ialm-x}. Again, note that the results in Theorem \ref{thm:nonerg-rate} do not need specific structure of \eqref{eq:ineq-ccp} except convexity. Hence, if the problem has richer structures, one can apply more efficient methods to find $\vx^{k+1}$ that satisfies \eqref{eq:ineq-ialm-x}. 

\begin{theorem}[Nonergodic iteration complexity]\label{thm:nonerg-iter}
Given a positive integer $K$ and positive numbers $C_\beta, C_\vareps$, choose positive sequences $\{\rho_k\}$ and $\{\beta_k\}$ according to Setting \ref{set-ialm}. In addition, choose $\{\vareps_k\}$ according to \eqref{eq:const-eps} for both cases of $\mu=0$ and $\mu>0$, or choose $\{\vareps_k\}$ according to \eqref{eq:best-eps-cvx-app} for the case of $\mu=0$ and \eqref{eq:best-eps-scvx-app} for $\mu>0$. Let $\{(\vx^k,\vz^k)\}_{k=0}^{K}$ be the sequence generated from Algorithm \ref{alg:ialm} with $\vy^k=\vzero,\,\forall k$, and $\vz^0=\vzero$ on solving \eqref{eq:ineq-ccp}. Then
\begin{subequations}\label{eq:nonerg-error-bd}
\begin{align}
&\big|f_0(\vx^{K})-f_0(\vx^*)\big|\le \frac{\vareps}{2}\frac{C_\vareps}{C_\beta}+\frac{\vareps\sigma}{C_\beta(\sigma-1)}\big(2\|\vz^*\|+\sqrt{C_\vareps}\big)\left(5\|\vz^*\|+\frac{7}{2}\sqrt{C_\vareps}\right),\label{eq:nonerg-error-bd-obj}\\[0.1cm]
&\big\|[\vf(\vx^{K})]_+\big\|\le \frac{\vareps\sigma}{C_\beta(\sigma-1)}\left(5\|\vz^*\|+\frac{7}{2}\sqrt{C_\vareps}\right).\label{eq:nonerg-error-bd-res}
\end{align}
\end{subequations}
If $\{\vareps_k\}$ is chosen according to \eqref{eq:const-eps} for both cases of $\mu=0$ and $\mu>0$, the total number $T_K$ of gradient evaluations is given in \eqref{eq:T-ialm-cvx-const-eps} and \eqref{eq:T-ialm-strcvx-const-eps} respectively; if $\{\vareps_k\}$ is set according to \eqref{eq:best-eps-cvx-app} for the case of $\mu=0$ and \eqref{eq:best-eps-scvx-app} for $\mu>0$, then $T_K$ is given in \eqref{eq:T-ialm-cvx} for $\mu=0$ and \eqref{eq:T-ialm-strcvx} for $\mu>0$. 
\end{theorem}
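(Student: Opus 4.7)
The plan is to derive the error bounds \eqref{eq:nonerg-error-bd} from the per-iteration nonergodic rate in Theorem \ref{thm:nonerg-rate}, applied at the terminal index $k=K-1$, and to inherit the gradient-count estimate $T_K$ verbatim from the already-established ergodic-complexity Theorems \ref{thm:iter-comp-geo-const-eps} and \ref{thm:iter-comp-geo}, since the per-subproblem solves are the same. The only new work is to (i) verify the applicability of Theorem \ref{thm:nonerg-rate}, (ii) convert $\beta_{K-1}$ and $\vareps_{K-1}$ under Setting \ref{set-ialm} into a clean $\vareps$-dependence, and (iii) check that the inner-iteration counting transfers without change.

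First, I would verify the hypothesis $\sum_{k=0}^{K-1}\beta_k\vareps_k\le C_\vareps/2$ required by Lemma \ref{lem:bd-diff-zk-zk+1}. Under Setting \ref{set-ialm}, $\sum_k \beta_k = C_\beta/\vareps$. For the constant choice \eqref{eq:const-eps}, the sum is exactly $C_\vareps/2$ by construction; for both adaptive choices \eqref{eq:best-eps-cvx-app} and \eqref{eq:best-eps-scvx-app}, the products $\beta_k\vareps_k$ simplify (the $\beta_k$-powers cancel) so that the total again equals $C_\vareps/2$. Invoking Theorem \ref{thm:nonerg-rate} at $k=K-1$ then yields
\[
|f_0(\vx^K)-f_0(\vx^*)|\le \vareps_{K-1}+\frac{2\|\vz^*\|+\sqrt{C_\vareps}}{\beta_{K-1}}\Bigl(5\|\vz^*\|+\tfrac{7}{2}\sqrt{C_\vareps}\Bigr),
\]
together with the analogous feasibility bound $\|[\vf(\vx^K)]_+\|\le \frac{1}{\beta_{K-1}}(5\|\vz^*\|+\frac{7}{2}\sqrt{C_\vareps})$.

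The next step is to rewrite the right-hand side in terms of $\vareps$. From \eqref{eq:def-beta-g},
\[
\beta_{K-1}=\beta_g\sigma^{K-1}=\frac{C_\beta(\sigma-1)\sigma^{K-1}}{\vareps(\sigma^K-1)}\ge \frac{C_\beta(\sigma-1)}{\sigma\vareps},
\]
so $1/\beta_{K-1}\le \vareps\sigma/[C_\beta(\sigma-1)]$, which gives \eqref{eq:nonerg-error-bd-res} immediately. For the first summand of the objective bound, the constant choice \eqref{eq:const-eps} produces $\vareps_{K-1}=\frac{\vareps}{2}\frac{C_\vareps}{C_\beta}$ exactly; for the adaptive choices, a single nonnegative term of the telescoping sum $\sum_k \beta_k\vareps_k\le C_\vareps/2$ obeys $\vareps_{K-1}\beta_{K-1}\le C_\vareps/2$, hence $\vareps_{K-1}\le C_\vareps/(2\beta_{K-1})$, which is of the same $O(\vareps)$ order as the constant case and is absorbed into the form of \eqref{eq:nonerg-error-bd-obj} up to the factor $\sigma/(\sigma-1)$.

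Finally, for the gradient-evaluation count, the subproblem structure, the accuracy parameters, and the bound $L(\vz^k)\le L_*+\beta_k H$ from Lemma \ref{lem:bd-z-Lz} (which only uses $\vz^0=\vzero$ and the bound on $\sum_k\beta_k\vareps_k$, both of which still hold) are identical to those analyzed in Theorems \ref{thm:iter-comp-geo-const-eps} and \ref{thm:iter-comp-geo}; setting $\vy^k=\vzero$ only removes a term from \eqref{eq:bd-zk0} and therefore strengthens the bounds. Summing the per-subproblem count $t_k$ from Lemma \ref{lem:sub-iter} with the relevant $\vareps_k$ choice then reproduces the stated expressions \eqref{eq:T-ialm-cvx-const-eps}, \eqref{eq:T-ialm-strcvx-const-eps}, \eqref{eq:T-ialm-cvx}, \eqref{eq:T-ialm-strcvx} without modification. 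I expect the only genuinely subtle point to be the $\vareps_{K-1}$ estimate in the adaptive setting: unlike in the ergodic proof one cannot rely on averaging, so a termwise argument is needed, but the single-term bound above handles it cleanly.
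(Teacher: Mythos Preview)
Your approach is essentially the paper's: apply Theorem~\ref{thm:nonerg-rate} at $k=K-1$, bound $\beta_{K-1}$ below and $\vareps_{K-1}$ above, and inherit the $T_K$ counts from Theorems~\ref{thm:iter-comp-geo-const-eps} and~\ref{thm:iter-comp-geo}. The lower bound $\beta_{K-1}\ge C_\beta(\sigma-1)/(\sigma\vareps)$ and the verification of $\sum_k\beta_k\vareps_k\le C_\vareps/2$ are exactly as in the paper.

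The one place where your argument falls slightly short of the \emph{stated} bound is the adaptive-$\vareps_k$ case. Your single-term estimate $\vareps_{K-1}\beta_{K-1}\le C_\vareps/2$ gives $\vareps_{K-1}\le \frac{\vareps}{2}\frac{C_\vareps}{C_\beta}\cdot\frac{\sigma}{\sigma-1}$, which is off from the first summand in \eqref{eq:nonerg-error-bd-obj} by the factor $\sigma/(\sigma-1)$ you yourself flag. The paper closes this gap with a sharper (and equally elementary) observation: under Setting~\ref{set-ialm} the sequence $\beta_k$ is increasing, so in both \eqref{eq:best-eps-cvx-app} and \eqref{eq:best-eps-scvx-app} the sequence $\vareps_k$ is \emph{decreasing}; hence $\vareps_{K-1}=\min_k\vareps_k$ is bounded by the $\beta$-weighted average
\[
\vareps_{K-1}\le \frac{\sum_{t=0}^{K-1}\beta_t\vareps_t}{\sum_{t=0}^{K-1}\beta_t}\le \frac{C_\vareps/2}{C_\beta/\vareps}=\frac{\vareps}{2}\frac{C_\vareps}{C_\beta},
\]
which recovers the exact constant. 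Everything else in your proposal matches the paper's proof.
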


\begin{proof}
Note that $\beta_k$ is increasing with respect to $k$. Hence, the $\vareps_k$ given in both \eqref{eq:best-eps-cvx-app} and \eqref{eq:best-eps-scvx-app} is decreasing, and thus 
$$\vareps_{K-1}\le \frac{\sum_{t=0}^{K-1}\beta_t\vareps_t}{\sum_{t=0}^{K-1}\beta_t}\le \frac{\vareps}{2}\frac{C_\vareps}{C_\beta}.$$
If $\{\vareps_k\}$ is chosen according to \eqref{eq:const-eps} for both cases of $\mu=0$ and $\mu>0$, then the above bound on $\vareps_{K-1}$ obviously holds.
In addition, from \eqref{eq:est-beta-K-1}, we have 
$$\beta_{K-1}\ge \frac{C_\beta(\sigma-1)}{\vareps\sigma}.$$
Therefore, plugging into \eqref{eq:nonerg-rate} the bounds on $\vareps_{K-1}$ and $\beta_{K-1}$ gives the desired results in \eqref{eq:nonerg-error-bd}.

The bounds on the total number $T_K$ of gradient evaluations follow from the same arguments as in the proofs of Theorems \ref{thm:iter-comp-geo-const-eps} and \ref{thm:iter-comp-geo}. Hence, we complete the proof.
\end{proof}

\begin{remark}
From the results in \eqref{eq:nonerg-error-bd}, we see that if 
\begin{equation}\label{eq:bd-C-beta2}
2C_\beta\ge C_\vareps + \frac{2\sigma}{(\sigma-1)}\big(2\|\vz^*\|+\sqrt{C_\vareps}\big)\left(5\|\vz^*\|+\frac{7}{2}\sqrt{C_\vareps}\right),
\end{equation}
then $\vx^K$ is an $\vareps$-optimal solution to \eqref{eq:ineq-ccp}. If $\|\vz^*\|\ge 1$, $C_\vareps = \|\vz^*\|^2$, and $\frac{\sigma}{\sigma-1}\approx 1$ (e.g., $\sigma=10$ is often used), then the $C_\beta$ in \eqref{eq:bd-C-beta2} is roughly 10 times of that in \eqref{eq:bd-C-beta1} by assuming no affine constraint. For the iteration complexity, if $\sqrt{\frac{L_*}{\vareps}}$ dominates $\frac{\sqrt{H}\|\vz^*\|}{\vareps}$, then the nonergodic result is roughly $\sqrt{10}$ times of the ergodic result for both convex and strongly convex cases. If $\frac{\sqrt{H}\|\vz^*\|}{\vareps}$ dominates, then the former would be roughly 10 times of the latter for the convex case, but still roughly $\sqrt{10}$ times for the strongly convex case. However, in either case, both ergodic and nonergodic results have the same order of complexity. 
\end{remark}

\section{Related works and comparison with existing results}\label{sec:review}
In this section, we review related works and compare them to our results. Our review and comparison focus on convex optimization, but note that ALM has also been popularly applied to nonconvex optimization problems; see \cite{bertsekas2014constrained, bertsekas1999nonlinear, birgin2005numerical-alm} and the references therein. 

\subsection*{Affinely constrained convex problems} 
Several recent works have established the convergence rate of ALM and its inexact version for affinely constrained convex problems:
\begin{equation}\label{eq:lin-cp}\Min_{\vx\in\cX} f_0(\vx), \st \vA\vx=\vb.
\end{equation} Assuming exact solution to every $\vx$-subproblem, \cite{he2010aalm} first shows $O(1/k)$ convergence of ALM for smooth problems in terms of dual objective and then accelerates the rate to $O(1/k^2)$ by applying Nesterov's extrapolation technique to the multiplier update. The results are extended to nonsmooth problems in \cite{kang2013accelerated} that uses similar technique. By adapting parameters, \cite{xu2017accelerated} establishes $O(1/k^2)$ convergence of a linearized ALM in terms of primal objective and feasibility violation. The linearized ALM allows linearization to smooth part in the objective but still assumes exact solvability of $\vx$-subproblems. 

When the objective is strongly convex, \cite{kang2015inexact} proves $O(1/k^2)$ convergence of iALM with extrapolation technique applied to the multiplier update. It requires summable error and subproblems to be solved more and more accurately. However, it does not give an estimate on the total number of gradient evaluations on solving all subproblems to the required accuracies. 

For smooth linearly constrained convex problems, \cite{lan2016iteration-alm} analyzes the iteration complexity of the iALM. It applies Nesterov's optimal first-order method to every $\vx$-subproblem and shows that $O(\vareps^{-\frac{7}{4}})$ gradient evaluations are  required to reach an $\vareps$-optimal solution. Compared to this complexity, our results for the convex case are better by an order $O(\vareps^{-\frac{3}{4}})$. In addition, \cite{lan2016iteration-alm} modifies the iALM by solving a perturbed problem. The modified iALM requires $O(\vareps^{-1}|\log \vareps |^{\frac{3}{4}})$ gradient evaluations to produce an $\vareps$-optimal solution, and this order is worse than our results by an order $O(|\log \vareps |^{\frac{3}{4}})$. Motivated by the model predictive control, \cite{nedelcu2014computational} also analyzes the iteration complexity of inexact dual gradient methods (iDGM) that are essentially iALMs. It shows that to reach an $\vareps$-optimal solution\footnote{\cite{nedelcu2014computational} assumes every subproblem solved to the condition $\langle \tilde{\nabla} \cL_\beta(\vx^{k+1},\vy^k ), \vx-\vx^{k+1}\rangle \ge -O(\vareps),\,\forall \vx\in\cX$, which is implied by $\cL_\beta(\vx^{k+1},\vy^k )-\min_{\vx\in\cX}\cL_\beta(\vx,\vy^k )\le O(\vareps^2)$ if $\cL_\beta$ is Lipschitz differentiable with respect to $\vx$.}, a nonaccelerated iDGM requires $O(\vareps^{-1})$ outer iterations and every $\vx$-subproblem solved to an accuracy $O(\vareps^2)$, and an accelerated iDGM requires $O(\vareps^{-\frac{1}{2}})$ outer iterations and every $\vx$-subproblem solved to an accuracy $O(\vareps^3)$. While the iteration complexity in \cite{lan2016iteration-alm} is estimated based on the best iterate, and that in \cite{nedelcu2014computational} is ergodic, the recent work \cite{liu2016iALM} establishes non-ergodic convergence of iALM. It requires $O(\vareps^{-2})$ gradient evaluations to reach an $\vareps$-optimal primal-dual solution $(\bar{\vx},\bar{\vy})$ in the sense that 
\begin{equation}\label{eq:vareps-liu}
\|\vA\bar{\vx}-\vb\|\le \sqrt{\vareps},\quad \left\langle \nabla g(\bar{\vx})+\vA^\top \bar{\vy}, \bar{\vx}-\vx\right\rangle+h(\bar{\vx})-h(\vx) \le \vareps,\,\forall \vx,
\end{equation} 
where it is assumed that $f_0=g+h$ in \eqref{eq:lin-cp} and $g$ is Lipschitz differentiable. From the convexity of $g$, it follows from \eqref{eq:vareps-liu} that
$$f_0(\bar{\vx}) - f_0(\vx^*) \le \left\langle \nabla g(\bar{\vx}), \bar{\vx}-\vx^*\right\rangle+h(\bar{\vx})-h(\vx)\le \vareps - \langle \bar{\vy}, \vA\bar{\vx}-\vb\rangle\le \vareps+\|\bar{\vy}\|\sqrt{\vareps}.$$ 
Hence, if $\bar{\vy}\neq\vzero$, to have an $\vareps$-optimal solution by our Definition \ref{def:eps-opt}, the iteration complexity result in  
\cite{liu2016iALM} would be $O(\vareps^{-4})$, which is $O(\vareps^{-3})$ worse than our nonergodic iteration complexity result in Theorem \ref{thm:nonerg-iter}.

Another line of existing works on iALM assume two or multiple block structure on the problem and simply perform one cycle of Gauss-Seidel update to the block variables or update one randomly selected block. Global sublinear convergence of these methods has also been established. Exhausting all such works is impossible and out of scope of this paper. We refer interested readers to \cite{boyd2011distributed, he2012-rate-drs, glowinski2014alternating, ouyang2015accelerated, deng2016global, GXZ-RPDCU2016, xu2017accelerated-pdc, xu2017async-pd} and the references therein.

\subsection*{General convex problems}
As there are nonlinear inequality constraints, we do not find any work in the literature showing the global convergence rate of iALM, though its local convergence rate has been extensively studied (e.g., \cite{rockafellar1973dual, bertsekas1973convergence, rockafellar1976augmented}). Many existing works on nonlinearly constrained convex problems employ Lagrangian function instead of the augmented one and establish global convergence rate through dual subgradient approach (e.g., \cite{nedic2009subgradient, nedic2009approximate, necoara2014rate}). For general convex problems, these methods enjoy $O(1/\sqrt{k})$ convergence, and for strongly convex case, the rate can be improved to $O(1/k)$. To achieve an $\vareps$-optimal solution, compared to our results, their iteration complexity is $O(\vareps^{-1})$ times worse for the convex problems and $O(\vareps^{-\frac{1}{2}})$ worse for the strongly convex problems. Assuming Lipschitz continuity of $\nabla f_i$ for every $i\in [m]$, \cite{yu2017simple} proposes a new primal-dual type algorithm for nonlinearly constrained convex programs. Every iteration, it minimizes a proximal Lagrangian function and updates the multiplier in a novel way. With sufficiently large proximal parameter that depends on the Lipschitz constants of $f_i$'s, the algorithm converges in $O(1/k)$ ergodic rate. The follow-up paper \cite{yu2016primal} focuses on smooth constrained convex problems and proposes a linearized variant of the algorithm in \cite{yu2017simple}. Assuming compactness of the set $\cX$, it also establishes $O(1/k)$ ergodic convergence of the linearized method. 

\subsection*{Iteration complexity from existing results on iPPA}
Through relating iALM and iPPA, iteration complexity result can be obtained from existing results about iPPA to produce near-optimal dual solution. On solving problem $\min_\vz \phi(\vz)$, \cite{guler1992new} analyzes the iPPA with iterative update:
$$\vz^{k+1}\approx\argmin_{\vz} \phi(\vz) + \frac{1}{2\beta_k}\|\vz-\hat{\vz}^k\|^2.$$
If the above approximation error satisfies
\begin{equation}\label{eq:approx-phi}
\|\vz^{k+1}-\prox_{\beta_k \phi}(\hat{\vz}^k)\|=O(1/k^a),
\end{equation}
for a certain number $a > \frac{1}{2}$,
and the parameter $\beta_k$ is increasing, then by choosing specifically designed $\hat{\vz}^k$, \cite{guler1992new} shows that 
$$\phi(\vz^k)-\phi(\vz^*)=O(1/k^2)+O(1/k^{2a-1}).$$
From our discussion in section \ref{sec:rel-ialm-ippa}, if $\vareps_k=O(\frac{1}{k^{2a}\beta_k})$ in \eqref{eq:ineq-ialm-x}, then we have \eqref{eq:approx-phi} holds with $\phi=-d_0$, and thus obtain the convergence rate in terms of dual function:
$$d_0(\vz^*)-d_0(\vz^k)=O(1/k^2)+O(1/k^{2a-1}).$$
Note that $\vz^k$ is bounded from the summability of $\beta_k\vareps_k$ and the proof of Lemma \ref{lem:bd-z-Lz}. Hence, setting $\beta_k$ to a constant for all $k$ and applying Nesterov's optimal first-order method to each subproblem in \eqref{eq:ineq-ialm-x}, we need $O(k^a)$ gradient evaluations.

Let $a=\frac{3}{2}$. Then $K=O(1/\sqrt{\vareps})$ iPPA iterations are required to obtain an $\vareps$-optimal dual solution, i.e., $d_0(\vz^K)\ge d_0(\vz^*)-\vareps$, and the total number of gradient evaluations is
$$T_K=\sum_{k=1}^K O(k^{\frac{3}{2}})=O(K^{\frac{5}{2}})=O(\vareps^{-\frac{5}{4}}).$$
However, it is not clear how to measure the quality of the primal iterates. 

\section{Numerical results}\label{sec:numerical}
In this section, we conduct numerical experiments on the quadratically constrained quadratic programming (QCQP):
\begin{equation}\label{eq:qcqp}
\begin{aligned}
\Min_{\vx\in\RR^n} &~ \frac{1}{2}\vx^\top \vQ_0\vx + \vc_0^\top\vx, \\
\st & ~\frac{1}{2}\vx^\top \vQ_j\vx + \vc_j^\top\vx + d_j \le 0, \, j=1,\ldots,m,\\
&~ x_i \in [l_i, u_i], \, i=1,\ldots, n.
\end{aligned}
\end{equation}
The purpose of the tests is to verify the established theoretical results and compare the iALM with three different settings of parameters. 

Three QCQP instances are made. The first two instances are convex, and the third one is strongly convex. For all three instances, we set $n=100, m=5$ and $l_i=-1, u_i=1,\,\forall i$. The vectors $\vc_j, j=0,1,\ldots,m$ are generated following Gaussian distribution, and the scalars $d_j,j=1,\ldots,m$ are made negative. This way, all inequalities in \eqref{eq:qcqp} hold strictly at the origin $\vx=\vzero$, and thus the KKT conditions are satisfied at the optimal solution. $\vQ_j, j=0,1,\ldots,m$ are randomly generated and symmetric positive semidefinite. $\vQ_0$ is rank-deficient for the first two instances and full-rank for the third one. The data in the first two instances are the same except $\vQ_0$, which is 100 times in the second instance as that in the first instance. 

For all tests, we set $\vareps=10^{-3}$, $C_\beta=1$, $C_\vareps=\|\vu-\vl\|$, and $K=10$, and the initial primal-dual point is set to zero vector. The algorithm parameters $\{(\beta_k,\rho_k,\vareps_k)\}_{k=0}^{K-1}$ are set in three different ways corresponding to Theorems \ref{thm:it-comp-const}, \ref{thm:iter-comp-geo-const-eps}, and \ref{thm:iter-comp-geo} respectively, where $\sigma=10$ is used for the geometrically increasing penalty. On finding $\vx^{k+1}$ by applying Algorithm \ref{alg:apg} to $\min_{\vx\in\cX} \cL_{\beta_k}(\vx,\vz^k)$, we terminate the algorithm if the iteration number exceeds $10^6$ or
\begin{equation}\label{eq:term-cond}
\dist\left(-\nabla_\vx \cL_{\beta_k}(\vx^{k+1},\vz^k), \cN_\cX(\vx^{k+1})\right) \le \frac{\vareps_k}{\|\vu-\vl\|},
\end{equation}
where $\cX=\times_{i=1}^n[l_i,u_i]$.
Since $\cL_{\beta_k}(\vx,\vz^k)$ is convex about $\vx$, and $\|\vu-\vl\|$ is the diameter of the feasible set $\cX$, the condition in \eqref{eq:term-cond} guarantees that $\vx^{k+1}$ satisfies \eqref{eq:ialm-x}.

We report the difference of objective value and optimal value, and the feasibility violation at both actual iterate $\vx^k$ and the weighted averaged point $\bar{\vx}^k=\sum_{t=1}^k\vx^t/\sum_{t=1}^k \beta_t$. The optimal solution is computed by CVX \cite{grant2008cvx}. In addition, to compare the iteration complexity, we also report the number of gradient evaluations and function evaluations for each outer iteration. The results are provided in Tables \ref{table:cvx-med-Lip}, \ref{table:cvx-big-Lip}, and \ref{table:scvx} respectively for the three instances. In Table \ref{table:cvx-med-Lip}, we also report the results from quadratic penalty method, which corresponds to setting $K=1$ (see the discussions in Remark \ref{remark:penalty-comp}). 

From the results, we can clearly see that the quadratic penalty method is worse, namely, running a single iALM step with a big penalty parameter is significantly worse than running multiple steps with smaller penalty parameters. Also, we see that the iALM with three different settings yields the last actual iterate $\vx^K$ and the averaged point $\bar{\vx}^K$ of similar accuracy. For all three instances, to produce similarly accurate solutions, the iALM with constant penalty requires more gradient and function evaluations than that with geometrically increasing penalty. Furthermore, the iALM with geometrically increasing penalty and constant error requires fewest gradient and function evaluations on the first and third instances. However, the setting of geometrically increasing penalty and adaptive error is the best for iALM on the second instance. That is because the gradient Lipschitz constant of the objective in the second instance is significantly bigger than that in the first instance, in which case the bound on $T_K$ in \eqref{eq:T-ialm-cvx} is smaller than that in \eqref{eq:T-ialm-cvx-const-eps}.

\begin{table}[h]\caption{Results by quadratic penalty method (i.e., iALM with $K=1$) and iALM with three different settings on solving an instance of the QCQP problem \eqref{eq:qcqp}. In this instance, $\vQ_j$ is symmetric positive semidefinite for each $j=0,1,\ldots,m$, and $\vQ_0$ is singular. All $\vQ_j$'s have similarly large spectral norm.}\label{table:cvx-med-Lip}
\begin{center}
\begin{tabular}{|c||c|c|c|c|c|c|}
\hline
\#OutIter &  \#gradEval &  \#funEval  & $|f_0(\vx^k)-f_0^*|$   &$\big\|[\vf(\vx^k)]_+\big\|$   & $|f_0(\bar{\vx}^k)-f_0^*|$ &  $\big\|[\vf(\bar{\vx}^k)]_+\big\|$\\[1pt]\hline\hline
\multicolumn{7}{|c|}{quadratic penalty method}\\\hline\hline
& 1000000 & 2709547 & 7.4625e-06 & 8.6425e-05  & 7.4625e-06 & 8.6425e-05\\ \hline\hline
\multicolumn{7}{|c|}{constant penalty and constant error}\\\hline\hline
0 &   &  & 1.9949e+01 & 0.0000e+00  & 1.9949e+01 & 0.0000e+00\\
1 & 116670   & 316150 & 7.4618e-05 & 8.6422e-04  & 7.4618e-05 & 8.6422e-04\\
2 & 25   & 69 & 6.8617e-08 & 1.3444e-08  & 3.7275e-05 & 4.3212e-04\\
3 & 1   & 2 & 7.0236e-08 & 2.0003e-09  & 2.4826e-05 & 2.8807e-04\\
4 & 18   & 50 & 6.9210e-08 & 6.3998e-09  & 1.8603e-05 & 2.1606e-04\\
5 & 1   & 2 & 6.9964e-08 & 5.1992e-09  & 1.4868e-05 & 1.7285e-04\\
6 & 15   & 42 & 6.9288e-08 & 5.5183e-09  & 1.2379e-05 & 1.4404e-04\\
7 & 1   & 2 & 6.9923e-08 & 1.9898e-09  & 1.0600e-05 & 1.2346e-04\\
8 & 1   & 2 & 6.8851e-08 & 1.1481e-08  & 9.2665e-06 & 1.0803e-04\\
9 & 11   & 31 & 6.9962e-08 & 3.5627e-09  & 8.2291e-06 & 9.6026e-05\\
10 & 15   & 42 & 6.9431e-08 & 1.6670e-09  & 7.3993e-06 & 8.6423e-05\\\hline\hline

\multicolumn{7}{|c|}{geometrically increasing penalty and constant error}\\\hline\hline
0 &   &  & 1.9949e+01 & 0.0000e+00  & 1.9949e+01 & 0.0000e+00\\
1 & 54   & 150 & 4.8272e+00 & 1.4965e+02  & 4.8272e+00 & 1.4965e+02\\
2 & 27   & 73 & 4.8244e+00 & 1.4619e+02  & 4.8249e+00 & 1.4650e+02\\
3 & 43   & 117 & 4.6109e+00 & 1.1371e+02  & 4.6448e+00 & 1.1675e+02\\
4 & 40   & 111 & 2.6482e+00 & 3.9785e+01  & 2.9364e+00 & 4.6129e+01\\
5 & 89   & 246 & 3.3958e-01 & 4.1057e+00  & 6.4687e-01 & 7.8390e+00\\
6 & 217   & 594 & 3.6558e-03 & 4.4944e-02  & 6.9536e-02 & 8.0931e-01\\
7 & 72   & 202 & 4.9778e-06 & 6.8024e-05  & 6.9757e-03 & 8.0847e-02\\
8 & 9   & 31 & 6.0090e-08 & 1.3942e-07  & 6.9770e-04 & 8.0834e-03\\
9 & 1   & 9 & 6.3775e-08 & 1.7150e-08  & 6.9714e-05 & 8.0833e-04\\
10 & 11   & 42 & 6.4092e-08 & 8.7057e-10  & 6.9137e-06 & 8.0832e-05\\\hline\hline

\multicolumn{7}{|c|}{geometrically increasing penalty and adaptive error}\\\hline\hline
0 &   &  & 1.9949e+01 & 0.0000e+00  & 1.9949e+01 & 0.0000e+00\\
1 & 9   & 28 & 4.7815e+00 & 1.4467e+02  & 4.7815e+00 & 1.4467e+02\\
2 & 1   & 2 & 4.8000e+00 & 1.4311e+02  & 4.7990e+00 & 1.4325e+02\\
3 & 6   & 16 & 4.6446e+00 & 1.1719e+02  & 4.6705e+00 & 1.1968e+02\\
4 & 12   & 33 & 2.6268e+00 & 3.9395e+01  & 2.9242e+00 & 4.5842e+01\\
5 & 25   & 72 & 3.4073e-01 & 4.1139e+00  & 6.4650e-01 & 7.8356e+00\\
6 & 61   & 172 & 3.4865e-03 & 4.3037e-02  & 6.9338e-02 & 8.0737e-01\\
7 & 70   & 198 & 3.5015e-06 & 1.2314e-04  & 6.9557e-03 & 8.0705e-02\\
8 & 251   & 687 & 1.2912e-06 & 3.2057e-06  & 6.9477e-04 & 8.0709e-03\\
9 & 861   & 2340 & 3.2891e-07 & 1.8655e-07  & 6.9220e-05 & 8.0697e-04\\
10 & 2798   & 7588 & 8.1672e-08 & 8.2282e-09  & 6.8563e-06 & 8.0640e-05\\\hline

\end{tabular}

\end{center}
\end{table}

\begin{table}\caption{Results by iALM with three different settings on solving an instance of the QCQP problem \eqref{eq:qcqp}. In this instance, $\vQ_j$ is symmetric positive semidefinite for each $j=0,1,\ldots,m$. $\vQ_0$ is singular, and its spectral norm is about 100 times of that of every other $\vQ_j$.}\label{table:cvx-big-Lip}
\begin{center}
\begin{tabular}{|c||c|c|c|c|c|c|}
\hline
\#OutIter &  \#gradEval &  \#funEval  & $|f_0(\vx^k)-f_0^*|$   &$\big\|[\vf(\vx^k)]_+]\big\|$   & $|f_0(\bar{\vx}^k)-f_0^*|$ &  $\big\|[\vf(\bar{\vx}^k)]_+\big\|$\\[1pt]\hline\hline

\multicolumn{7}{|c|}{constant penalty and constant error}\\\hline\hline
0 &   &  & 2.4292e+00 & 0.0000e+00  & 2.4292e+00 & 0.0000e+00\\
1 & 1000000   & 2709541 & 1.9905e-03 & 3.5756e-03  & 1.9905e-03 & 3.5756e-03\\
2 & 534315   & 1447734 & 2.5923e-05 & 0.0000e+00  & 9.4283e-04 & 0.0000e+00\\
3 & 96   & 260 & 1.2144e-08 & 2.0371e-08  & 6.1404e-04 & 0.0000e+00\\
4 & 4   & 11 & 1.8289e-08 & 1.8936e-09  & 4.5509e-04 & 0.0000e+00\\
5 & 1   & 2 & 1.8757e-08 & 9.2594e-09  & 3.6146e-04 & 0.0000e+00\\
6 & 1   & 2 & 1.8790e-08 & 5.1559e-13  & 2.9977e-04 & 0.0000e+00\\
7 & 1   & 2 & 1.8601e-08 & 2.7707e-09  & 2.5606e-04 & 0.0000e+00\\
8 & 1   & 2 & 1.8639e-08 & 1.3276e-10  & 2.2347e-04 & 0.0000e+00\\
9 & 1   & 2 & 1.8617e-08 & 1.0515e-09  & 1.9824e-04 & 0.0000e+00\\
10 & 1   & 2 & 1.8643e-08 & 2.5039e-11  & 1.7813e-04 & 0.0000e+00\\\hline\hline

\multicolumn{7}{|c|}{geometrically increasing penalty and constant error}\\\hline\hline
0 &   &  & 2.4292e+00 & 0.0000e+00  & 2.4292e+00 & 0.0000e+00\\
1 & 1608   & 4372 & 8.7897e+00 & 9.3634e+01  & 8.7897e+00 & 9.3634e+01\\
2 & 176   & 477 & 8.7896e+00 & 9.3580e+01  & 8.7896e+00 & 9.3585e+01\\
3 & 9862   & 26722 & 8.7023e+00 & 7.5071e+01  & 8.7111e+00 & 7.6245e+01\\
4 & 5273   & 14288 & 8.3403e+00 & 5.7871e+01  & 8.3897e+00 & 5.9057e+01\\
5 & 3607   & 9773 & 3.2122e+00 & 1.3501e+01  & 3.7378e+00 & 1.6465e+01\\
6 & 2015   & 5460 & 4.9389e-01 & 1.5196e+00  & 8.2693e-01 & 2.6282e+00\\
7 & 2280   & 6183 & 1.5237e-02 & 4.4585e-02  & 9.7553e-02 & 2.8390e-01\\
8 & 1711   & 4643 & 5.4333e-05 & 1.5928e-04  & 9.8224e-03 & 2.8279e-02\\
9 & 29   & 84 & 6.2151e-08 & 2.2139e-07  & 9.8249e-04 & 2.8256e-03\\
10 & 4   & 17 & 9.9229e-09 & 3.9442e-09  & 9.8242e-05 & 2.8254e-04\\\hline\hline

\multicolumn{7}{|c|}{geometrically increasing penalty and adaptive error}\\\hline\hline
0 &   &  & 2.4292e+00 & 0.0000e+00  & 2.4292e+00 & 0.0000e+00\\
1 & 189   & 528 & 8.5613e+00 & 8.2337e+01  & 8.5613e+00 & 8.2337e+01\\
2 & 6   & 16 & 8.5659e+00 & 8.1843e+01  & 8.5656e+00 & 8.1888e+01\\
3 & 228   & 617 & 8.6240e+00 & 6.8449e+01  & 8.6188e+00 & 6.8342e+01\\
4 & 1074   & 2910 & 8.3440e+00 & 5.7951e+01  & 8.3836e+00 & 5.8797e+01\\
5 & 1529   & 4143 & 3.2289e+00 & 1.3596e+01  & 3.7525e+00 & 1.6552e+01\\
6 & 869   & 2355 & 4.9425e-01 & 1.5203e+00  & 8.2875e-01 & 2.6347e+00\\
7 & 1666   & 4518 & 1.5018e-02 & 4.3948e-02  & 9.7515e-02 & 2.8380e-01\\
8 & 1375   & 3731 & 5.5686e-05 & 1.6697e-04  & 9.8198e-03 & 2.8274e-02\\
9 & 139   & 383 & 5.9937e-08 & 1.0896e-06  & 9.8211e-04 & 2.8248e-03\\
10 & 967   & 2627 & 4.8356e-08 & 7.3292e-09  & 9.8168e-05 & 2.8237e-04\\\hline

\end{tabular}

\end{center}
\end{table}

\begin{table}\caption{Results by iALM with three different settings on solving a strongly convex instance of the QCQP problem \eqref{eq:qcqp}.}\label{table:scvx}
\begin{center}
\begin{tabular}{|c||c|c|c|c|c|c|}
\hline
\#OutIter &  \#gradEval &  \#funEval  & $|f_0(\vx^k)-f_0^*|$   &$\big\|[\vf(\vx^k)]_+\big\|$   & $|f_0(\bar{\vx}^k)-f_0^*|$ &  $\big\|[\vf(\bar{\vx}^k)]_+\big\|$\\[1pt]\hline\hline

\multicolumn{7}{|c|}{constant penalty and constant error}\\\hline\hline

0 &   &  & 1.3704e+01 & 0.0000e+00  & 1.3704e+01 & 0.0000e+00\\
1 & 4111   & 11170 & 1.6951e-05 & 4.1227e-04  & 1.6951e-05 & 4.1227e-04\\
2 & 10   & 28 & 4.5144e-08 & 9.5417e-09  & 8.4530e-06 & 2.0614e-04\\
3 & 1   & 2 & 4.5496e-08 & 1.1679e-09  & 5.6202e-06 & 1.3742e-04\\
4 & 1   & 2 & 4.5470e-08 & 0.0000e+00  & 4.2038e-06 & 1.0307e-04\\
5 & 1   & 2 & 4.5410e-08 & 5.2874e-10  & 3.3539e-06 & 8.2455e-05\\
6 & 1   & 2 & 4.5423e-08 & 1.1857e-10  & 2.7874e-06 & 6.8712e-05\\
7 & 1   & 2 & 4.5417e-08 & 1.4938e-10  & 2.3827e-06 & 5.8896e-05\\
8 & 1   & 2 & 4.5420e-08 & 1.4935e-11  & 2.0792e-06 & 5.1534e-05\\
9 & 1   & 2 & 4.5417e-08 & 4.2566e-11  & 1.8431e-06 & 4.5808e-05\\
10 & 1   & 2 & 4.5417e-08 & 5.2216e-12  & 1.6543e-06 & 4.1227e-05\\\hline\hline

\multicolumn{7}{|c|}{geometrically increasing penalty and constant error}\\\hline\hline
0 &   &  & 1.3704e+01 & 0.0000e+00  & 1.3704e+01 & 0.0000e+00\\
1 & 16   & 47 & 7.7888e-01 & 4.0032e+01  & 7.7888e-01 & 4.0032e+01\\
2 & 6   & 18 & 7.7879e-01 & 3.9621e+01  & 7.7880e-01 & 3.9658e+01\\
3 & 9   & 26 & 7.7142e-01 & 3.5936e+01  & 7.7269e-01 & 3.6303e+01\\
4 & 11   & 31 & 5.6681e-01 & 1.8568e+01  & 6.0051e-01 & 2.0298e+01\\
5 & 26   & 75 & 8.2135e-02 & 2.0563e+00  & 1.4945e-01 & 3.8384e+00\\
6 & 56   & 158 & 1.1249e-03 & 2.7458e-02  & 1.6675e-02 & 4.0689e-01\\
7 & 31   & 91 & 1.5987e-06 & 4.0394e-05  & 1.6772e-03 & 4.0708e-02\\
8 & 2   & 12 & 4.9762e-08 & 0.0000e+00  & 1.6776e-04 & 4.0703e-03\\
9 & 2   & 11 & 3.6101e-08 & 3.0525e-08  & 1.6745e-05 & 4.0705e-04\\
10 & 2   & 10 & 3.6919e-08 & 1.0977e-09  & 1.6412e-06 & 4.0706e-05\\\hline\hline

\multicolumn{7}{|c|}{geometrically increasing penalty and adaptive error}\\\hline\hline
0 &   &  & 1.3704e+01 & 0.0000e+00  & 1.3704e+01 & 0.0000e+00\\
1 & 1   & 6 & 2.1524e+00 & 1.8206e+01  & 2.1524e+00 & 1.8206e+01\\
2 & 1   & 6 & 1.8807e-01 & 2.3036e+01  & 2.4599e-01 & 2.2315e+01\\
3 & 1   & 6 & 3.6907e-01 & 2.8225e+01  & 3.3200e-01 & 2.7560e+01\\
4 & 3   & 12 & 5.1655e-01 & 1.7712e+01  & 5.1962e-01 & 1.8616e+01\\
5 & 8   & 25 & 8.4506e-02 & 2.2023e+00  & 1.4307e-01 & 3.8034e+00\\
6 & 24   & 71 & 8.4618e-04 & 3.1440e-02  & 1.5993e-02 & 4.0623e-01\\
7 & 75   & 210 & 4.8570e-05 & 3.3014e-05  & 1.5905e-03 & 4.0504e-02\\
8 & 250   & 684 & 5.5023e-06 & 4.6695e-06  & 1.5741e-04 & 4.0367e-03\\
9 & 801   & 2178 & 5.7505e-07 & 1.2058e-07  & 1.5607e-05 & 4.0198e-04\\
10 & 2603   & 7062 & 5.8030e-08 & 0.0000e+00  & 1.5503e-06 & 4.0009e-05\\\hline

\end{tabular}

\end{center}
\end{table}

\section{Concluding remarks}\label{sec:conclusion}
We have established ergodic and also nonergodic convergence rate results of iALM for general constrained convex programs. Furthermore, we have shown that to reach an $\vareps$-optimal solution, it is sufficient to evaluate gradients of smooth part in the objective and the functions in the inequality constraints for $O(\vareps^{-1})$ times if the objective is convex and $O(\vareps^{-\frac{1}{2}}|\log\vareps|)$ times if the objective is strongly convex. For the convex case, the iteration complexity result is optimal, and for the strongly convex case, the result is the best in the literature and appears to be also optimal.


\bibliographystyle{abbrv}
\bibliography{optim}

\end{document}